\pgfplotsset{compat=1.9}
\g@addto@macro\@floatboxreset{\centering} 
\DeclareMathOperator{\diag}{diag}
\newtheorem{teo}{{Theorem}}[section]
\newtheorem{lema}[teo]{ Lemma}
\newtheorem{prop}[teo]{Proposition}
\newtheorem{remark}[teo]{Remark}
\DeclareMathOperator{\GL}{GL}
\DeclareMathOperator{\id}{id}
\DeclareMathOperator{\defect}{defect}
\DeclareMathOperator{\im}{im}
\DeclareMathOperator{\supp}{supp}
\DeclareMathOperator{\End}{End}
\DeclareMathOperator{\Rep}{Rep}
\DeclareMathOperator{\M}{Mat}
\DeclareMathOperator{\Hom}{Hom}
\DeclareMathOperator{\cogrado}{codeg}
\DeclareMathOperator{\modu}{mod}
\newcommand{\calb}{\ensuremath{\mathcal{B}}}
\newcommand{\calm}{\ensuremath{\mathcal{M}}}
\newcommand{\Ss}{§}
\newcommand{\C}{\mathbb{C}}
\newcommand{\Z}{\mathbb{Z}}
\newcommand{\N}{\mathbb{N}}
\newcommand{\F}{\mathbb{F}}
\newcommand{\A}{\alpha}
\newcommand{\B}{\beta}
\newcommand{\G}{\gamma}
\newcommand{\D}{\delta}
\newcommand{\E}{\varepsilon}
\newcommand{\Si}{\sigma}
\newcommand{\mcm}{\ensuremath{\mathrm{lcm}}}
\newcommand{\mcd}{\ensuremath{\mathrm{gcd}}}
\newcommand{\spec}{\ensuremath{\mathrm{Spec}}}
\newcommand{\lra}{\ensuremath{\rightarrow}}
\newcommand{\parE}{(\!(\E)\!)}
\newcommand{\parEn}{(\!(\E^\frac{1}{n})\!)}
\newcommand{\lp}{(\!(}
\newcommand{\rp}{)\!)}
\title{
A partial classification of simple regular representations of bimodules type $(2,\,2)$ over $\C\parE$}
\author[1]{Hern\'an Giraldo\thanks{hernan.giraldo@udea.edu.co}}
\author[1]{David Reynoso-Mercado\thanks{david.reynoso@udea.edu.co}}
\author[1]{Pedro Rizzo\thanks{pedro.hernandez@udea.edu.co}}
\affil[1]{\'Algebra, Teor\'ia de N\'umeros y Aplicaciones: ERM (ALTENUA) y \'Algebra UdeA, Instituto de M\'atematicas, Facultad de Ciencias Exactas y Naturales, Universidad de Antioquia UdeA, Calle 70 No. 52-21, Medell\'in, Colombia}
\begin{document}

\maketitle
\begin{abstract}

In this paper, we use Galois descent techniques to find suitable representatives of the regular simple representations of the species of type $(2,2)$ over $k_n := k[\varepsilon^{1/n}]$, where $n$ is a positive integer and $k:=\mathbb{C}(\!(\varepsilon)\!)$ is the field of Laurent series over the complexes. These regular representations are essential for the definition of canonical algebras. Our work is inspired by the work done for species of type $(1,4)$ on $k$ in \cite{GR22}. We presents all the regular simple representations on the $n$-crown quiver, and from these, we establish a partial classification of regular simple representations of bimodules type $(2,2)$.\\

\textbf{Keywords:} Galois descent, species, regular simple representations, canonical algebras, $n$-crown quiver.

\end{abstract}
\section{Introduction}
In \cite{GLS16}, Geiss \textit{et al.} began investigating the representation theory of a new class of quiver algebras associated with symmetrizable generalized Cartan matrices. Their goal was to lay the foundation for generalizing many of the connections between path algebras, preprojective algebras, Lie algebras, and cluster algebras from the symmetric to the symmetrizable case. One of their specific goals was to obtain new geometric constructions of the positive part of a symmetrizable Kac-Moody algebra in terms of varieties of representations of these quiver algebras. For example, they introduced a new class of $\F$-algebras, denoted by $H_\F(C,\,D,\,\Omega)$, on the field $\F$. These algebras allow one to model species representations as locally free modules over them, especially when $\F$ is algebraically closed. Since the algebra $H_\F(C,\,D,\,\Omega)$ is defined via quivers with relations, one can study its module varieties over any field $\F$.  In the particular case that $\F=\C$, they generalize Lusztig's nilpotent varieties from the symmetric to the symmetrizable case.

In this article, we study normal forms for the regular simple representations of species of type $(2,2)$, also denoted as $\tilde{A}_{11}$ in the notation introduced in \cite{DR75}. Our goal is to obtain a partial classification of these representations. From a broader perspective, our partial classification can be seen as a new step forward in the study of species representations of the $\F$-algebra $H_\F(C,\,D,\,\Omega)$. 

Our classification technique, following the ideas in \cite{GR22}, consists of constructing a canonical algebra, as outlined by Ringel in \cite{Ri90}, associated to a bimodule of type $(2,2)$. Specifically, we focus on bimodules with product dimension equal to $4$ over division algebras with center $\C\parE$. There are two distinct families of such bimodules, of type $(1,4)$ and of type $(2,2)$, as discussed in subsection \ref{sec1.3}. The former case has been extensively examined in \cite{GR22}. In this paper, we shift our focus to the latter type and use a special case of representations of the $n-$crown quiver (see subsection \ref{ncrown}) to obtain a classification that differs substantially from that in \cite{GR22}.

Inspired by \cite{GLS20}, we first need to find explicit normal forms for canonical algebras of type $(2,2)$ over the field of complex Laurent series, $\C\parE$. To this end, we must first find normal forms of the regular simple representations of species of type $(2,2)$. Since $\C\parE$ is a quasi-finite field, the technique of Galois descent is an efficient tool for achieving our objectives in this context.

This paper is organized as follows. In Section \ref{Sec:PreyNot}, we recall several concepts and fix some notations that we will use throughout the paper. More precisely, we introduce a brief study of root systems for quivers in Subsection \ref{quivers}. We present a quick tour of string and band modules on string algebras in Subsection \ref{string}. In Subsection \ref{ncrown}, we focus on the classification of indecomposable modules for a path gentle (and therefore string) algebra constructed on the \emph{$n$-crown quiver} $\mathcal{Q}_n$.

In section \ref{main}, we develop the main results of this paper. In fact, in Subsection \ref{isokd} we will establish a decisive isomorphism as $k_d$-algebras between the tensor algebra $k_d\otimes_k\Lambda_n$ and the path algebra $k_d\mathcal{Q}_n$, where $\Lambda_n$ is the \textit{Euclidean species} $\Lambda_n\coloneqq\left[\begin{array}{cc}k_n & k_n\oplus\,^{\Si_n}k_n\\0 & k_n \end{array}\right]$. The purpose of this paper is to classify the $\Lambda_n-$modules of type $(2,\,2)$ over $k_n$. We partially resolve this problem from of the next subsections. Our strategy is as follows. We show that, in Subsection \ref{isokd}, the path algebra $k_{d}\mathcal{Q}_{n}$ admits a $k_n$-linear automorphism of order $d$, which is compatible with the action of $\Si_{d}$ on $k_{d}\otimes\Lambda_n$. We denote this automorphism by $\G_{d,n}$. In Subsection \ref{Invariantrep}, we describe how $\G_{d,n}$ acts on the representations of the $n$-crown quiver $\mathcal{Q}_n$. Using Theorem \ref{teoiso}, we show how a regular simple representation of $\Lambda_n$ can be regarded as a $\mathcal{Q}_n$-representation. In Subsection \ref{GD}, we prove that there is a bijection between certain isoclasses of regular simple $\Lambda_n$-modules and certain special $\langle\Si_d\rangle$-orbits of regular simple $k_d\mathcal{Q}_n$-modules (see Theorem \ref{lemma3.7}). In Subsection \ref{hnhrep}, we prove that any regular simple representation is isomorphic to a sum of irreducible elements in two disjoint families of invariant modules (see Theorem \ref{classifi}). Finally, we present a total classification of regular simple non-homogeneous representations on $\Lambda_n$ and a partial classification of regular simple homogeneous representations on $\Lambda_n$, for any $n\geq 2$. The classes of regular simple non-homogeneous and homogeneous representations correspond to string and band (indecomposable) modules of the path algebra $k_d\mathcal{Q}_n$, respectively. In Subsection \ref{Seclambda} we conclude our classification with the study of the case $n=1$.

\section{Preliminaries}\label{Sec:PreyNot}

Throughout our work, $k:=\C\parE$ denotes the field of Laurent series over the complex numbers, which is a quasi-finite because $\C$ is an algebraically closed field of characteristic $0$ (see \cite[Chapter XIII, \Ss 2]{S79}). Thus, each division algebra over $k$ is isomorphic to $k_n\coloneqq \C\parEn$, for some $n\in\{1,\,2,\,3,\cdots\}=\Z_{>0}$. In particular, every finite-dimensional division algebra is commutative, and the Galois group of the extension $k_n\big\lvert k$ is the cyclic group $C_n=\langle\Si_n\rangle$ of order $n$. Here, $\Si_n$ acts $k$-linearly on $k_n$ via $\Si_n(\E^\frac{j}{n})=\zeta_n^j\E^\frac{j}{n}$, where $\zeta_n\coloneqq e^\frac{2\pi i}{n}$.

For $m$ a divisor of $n$, we identify $k_m$ with the subfield of $k_n$ generated by $(\E^\frac{1}{n})^\frac{n}{m}$, thus $\Si_n|_{k_m}=\Si_m$. We say that $x\in k_n$ is \emph{generic} if $|C_n \cdot x|=n$. This is equivalent to prove that $\Pi_{j=0}^{n-1}(y-\Si_n^j(x))\in k[y]$, which is an irreducible polynomial. For $0\neq x=\sum_{j\in\Z}x_j\E^\frac{j}{n}\in k_n$ we define $\cogrado_n(x)\coloneqq \min\{j\in\Z|x_j\neq0\}$ and $\cogrado(0)=\infty$.

\subsection{Representations of the species of type $(2,\,2)$ over finite-dimensional extensions of $\C\parE$}\label{sec1.3}

In this subsection, we consider bimodules for which the field $k$, acts centrally. The category of these $k_n$-$k_m$-bimodules is equivalent to the category of left $k_n\otimes_kk_m$-modules, where $n$ and $m$ are positive integers.

Let $X$ be a $k_n$-$k_m$-bimodule, and let $\Si_n$ be an automorphism of $k_n$. We define the bimodule $^{\Si_n}X$, where the left multiplication is defined as $y{\ast}x\coloneqq \Si_n(y)x$ for any $x\in X,\, y\in k_n$, and the right multiplication is the usual multiplication as a $k_m$-module. Similarly, we can define $X^{\Si_m}$ as a $k_n$-$k_m$-bimodule.

As we will show in Proposition \ref{obs1}, $k_n\otimes_kk_m$ is isomorphic to $k_{\mcm(n,\, m)}\times\cdots\times k_{\mcm(n,\,m)}$, $\mcd(n,\,m)-$times, as $k$-algebras. Thus, the category of $k_n$-$k_m$-bimodules is semisimple with $\mcd(n,\,m)$-isoclasses of simple objects.

Let $n,\,m\in\Z_{>0}$ and $X$ be a $k_n$-$k_m$-bimodule such that $\dim(\,_{k_n}X)\dim (X_{k_m})=4$. Then it is easy to see that only two cases are possible:
\begin{enumerate}
\item $k_n=k_4,\,k_m =k$ y  $X=k_4$.  
\item $k_n=k_m$ and $X=k_n\oplus \,^{\Si_n}k_n$, for some $n\in \Z_{>0}$. 
\end{enumerate}

The first case has already been addressed in \cite{GR22}, so we will focus on the second case.

Recall that a representation $M$ of $k_n\oplus\,^{\Si_n}k_n$ as a $k_n$-$k_n$-bimodule takes the form $(k_n^m,k_n^{m'},\varphi_M)$, where $\varphi_M:(k_n\oplus,^{\Si_n}k_n)\otimes_{k}k_n^{m'}\lra k_n^{m}$ is a $k_n$-linear morphism, and $m,\,m'\in \Z_{>0}$, as defined in \cite{Ri90}. Let $N$ be the matrix of the mapping $\varphi_M$.

By definition, the objects in the category $\Rep_{k_n}(2,\,2)$ are $m\times 2m'$ matrices with entries in $k_n$. For $N\in \M_{m\times 2m'}(k_n)$ and $N'\in \M_{l\times 2l'}(k_n)$, the morphisms from $N$ to $N'$ are given by the $k_n$-vector space:
$$\Hom_{(2,\,2)}(N,\,N')\coloneqq \{\left(f_2,\,f_1\right)\in\M_{m\times l}(k_n)\times\M_{2l'\times 2m'}(k)|f_2N=N'f_1\}.$$
It is straightforward to see that $\Rep_{k_n}(2,\,2)$ is an abelian $k$-linear category. It is naturally equivalent to $\Lambda_n\mbox{-}\modu$ for the Euclidean species $\Lambda_n\coloneqq\left[\begin{array}{cc}
k_n & k_n\oplus\,^{\Si_n}k_n\\0 & k_n \end{array}\right]$ of type $(2,\,2)$ over $k_n$.

Let $M$ be a representation and $N\in\M_{m\times 2m'}(k_n)$ be the matrix of the morphism
$$\varphi_M:(k_n\oplus\,^{\Si_n}k_n)\otimes_{k}k_n^{m'}\lra k_n^{m}.$$
We denote by $\underline{\dim}N=(m',\,m)\in\N_0^2$. Following Dlab-Ringel \cite[\Ss 3]{DR75}, $N\in\Rep_k(2,\,2)$ is called {\em regular simple} if $\underline{\dim}N=(m,m)$ for some $m\in\Z_{>0}$ and $\End_{(2,\,2)}(N)$ is a division algebra. Since $k_n$ is quasi-finite, the previous condition is equivalent to $\End_{(2,\,2)}(N)\cong k_l$ for some $l$.

\subsection{Quivers and roots}\label{quivers}
Let $Q$ be a finite and connected quiver. As usual, we denote by $Q_0$ (resp. by $Q_1$) the set of vertices (resp. the set of arrows) of $Q$. Also, $h(a)$ (resp. $t(a)$) denotes the vertex of $Q_0$ where the arrow $a$ starts (resp. ends).

A {\it subquiver} of a quiver $Q=(Q_0,\,Q_1,\,h,\,t)$ is a quiver $Q'=(Q'_0,\,Q'_1,\,h',\,t')$ where $Q'_0\subset Q_0$, $Q'_1\subset Q_1$, $h'=h|_{Q'_1}$ and $s'=s|_{Q'_1}$. A subquiver $Q'$ of $Q$ is \textit{full} if $Q'_1=\{a\in Q_1|h(a),\,t(a)\in Q'_0\}$.

Associated to every quiver $Q$ we have a \textit{symmetric generalized Cartan matrix}, $A_Q=(a_{ij})$, whose entries are defined as follows:
\begin{eqnarray}
    a_{ij}=\left\{\begin{array}{lc}
        2 & \textit{if } i=j;\\
        -\#\{\textit{edges between $i$ and $j$}\} & \textit{if } i\neq j. 
    \end{array} \right.
\end{eqnarray}
Note that, $A_Q$ not depend of the orientation on $Q$.

The {\it root lattice} of $Q$ is defined by the free abelian group $\Z^{Q_0}$. We identify each trivial path $e_i$ with the $i-$th standard basis vector of $\Z^{Q_0}$, for each $i\in Q_0$. This group admits a partially order ``$\boldsymbol{\geq}$'' induced, for $a=\sum_{i\in Q_0}a_ie_i\in \Z^{Q_0}$, by the relation:  
$$
a=\sum_{i\in Q_0}a_ie_i \boldsymbol{\geq} \boldsymbol{0}\ \text{if and only if}\ a_i\geq 0\ \text{for all}\ i\in Q_0,
$$
where $\boldsymbol{0}$ is the zero object in $\Z^{Q_0}$. In consequence, $a=\sum_{i\in Q_0}a_ie_i \boldsymbol{\geq} b=\sum_{i\in Q_0}b_ie_i$ if and only if $a-b\boldsymbol{\geq}\boldsymbol{0}$.




From $A_Q$ we can define a \textit{symmetric bilinear form} $(-,-):\Z^{Q_0}\times \Z^{Q_0}\rightarrow \Z$, which satisfies $(e_i,\,e_j):=a_{ij}$, for all $i,j$. This yields a \textit{reflection} $r_i:\Z^{Q_0}\lra \Z^{Q_0}$ \textit{on} $\Z^{Q_0}$, which is defined by $a\mapsto a-(a,e_i)e_i$, for each $i\in Q_0$. We define the \textit{Weyl Group} of $Q$, denoted by $\mathcal{W}$, as the subgroup of $\text{Aut}(\Z^{Q_0})$ generated by the all reflections $r_i$. We now introduce the \textit{set of (positive) imaginary roots for} $Q$, denoted by $\Delta^+_{\im}:=\bigcup_{w\in\mathcal{W}}w(F)$, as the union of the sets of images $w(F)$, where $w$ ranges over all elements of the Weyl group $\mathcal{W}$. Here the set $F$, which is called \textit{the fundamental region}, is defined by
$$
F:=\{a\in \Z^{Q_0}\mid a\neq0,\,(a,e_i)\leq0\ \text{for all}\ i\in Q_0,\ \text{and $\supp a$ is connected}\},
$$
where $\supp a$ represents the full subquiver of $Q$ with vertex set $\{i\in Q_0\mid a_i\neq 0\}$. The quiver $\supp a$ is called \textit{the support of} $a\in \Z^{Q_0}$.

Due to the above comments, we denote \textit{the minimal positive imaginary root} by $\delta$. If $M=(V_i, f_\rho)$ is an indecomposable representation of the quiver $Q$ over field $\F$, then the {\it defect} of $M$ is defined by the integer
\begin{eqnarray*}
    \defect(M):=\langle\delta,\dim M\rangle,
\end{eqnarray*}
where ``$\langle \cdot,\,\cdot\rangle$'' is \textit{the Euler form of} $Q$, which is defined by
\begin{eqnarray*}
\langle a,\,b\rangle:=\sum_{i\in Q_0}a_ib_i-\sum_{\rho:i\rightarrow j}a_ib_j, 
\end{eqnarray*}
and $\dim M:=\sum_{i\in Q_0}(\dim V_i)e_i$. 

Let $Q$ be a connected quiver without oriented cycles. According to the theory above, we have the following important criteria 
for the isomorphism classes of indecomposable representations $M$ of $Q$:
\begin{enumerate}
    \item $M$ is preprojective if and only if $\tau^rM=0$ for some sufficiently large integer $r$ if and only if $\defect(M)<0$.
    \item $M$ is preinjective if and only if $\tau^{-r}M=0$ for some sufficiently large integer $r$ if and only if $\defect(M)>0$.
    \item $M$ is regular if and only if $\tau^{-r}\tau^rM=M$ for all integers $r$ if and only if $\defect(M)=0$.
\end{enumerate}
Here, $\tau$ represents the \textit{Auslander-Reiten translate} (see \cite{ASS06}).

\subsection{String Algebras}\label{string}
Let $\F Q/I$ be a bound quiver algebra. We say that $\F Q/I$ is a {\it string algebra} (see \cite{BR87}) 
if it satisfies the following three conditions:
\begin{enumerate}
    \item[(S1)] Any vertex of $Q$ is starting or ending point of at most two arrows.
    \item[(S2)] Given an arrow $\B\in Q_1$, there exists at most one arrow $\rho\in Q_1$ such that either $\B\rho\notin I$ or $\rho\B\notin I$.
    \item[(S3)] The ideal $I$ is generated by null relations. 
\end{enumerate} 
Let $\F Q/I$ be a string algebra and $a\in Q_1$ be an arrow. The {\it formal inverse of} $a$ is the arrow $a^{-1}$ such that $t(a^{-1})=h(a)$ and $h(a^{-1})=t(a)$. A {\it string} $S$ is a sequence $S=s_1\cdots s_n$, where $s_i\in Q_1$ or $s_i^{-1}\in Q_1$, $t(s_i)=h(s_{i-1})$ and $s_i\neq s_{i-1}^{-1}$ for all $1<i\leq n$. Let $\mathcal{S}$ be the set of all strings. We define an equivalence relation $S\sim_s S'$ if and only if $S=S'$ or $S'=S^{-1}$. We denote the set of representatives of the equivalence classes of $\mathcal{S}/\sim_s$ as $\underline{\mathcal{S}}$. Here $S^{-1}$ corresponds to the string $S^{-1}=s_n^{-1}\cdots s_1^{-1}$.

Let $\F Q/I$ be a string algebra and $S=s_1s_2\cdots s_n\in \underline{\mathcal{S}}$ be a string. We define a map $u:\{0,\,1,\cdots,\,n\}\lra Q_0$ by $u(0)=t(s_1)$ and $u(i)=h(s_i)$, for all $1\leq i\leq n$. We will define a representation $M(S)$ of the quiver $Q$ bounded by the relations $I$ associated to the string $S$. Indeed, for each vertex $v\in Q_0$, let $I_v=u^{–1}(v)\subset\{0,\,1,\cdots,\,n\}$ be the set of indexes. To each vertex $v$ we associate the vector space $M(S)_v=\F^{\# I_v}$, where $\# I_v$ denotes the cardinality of the set $I_v$. Now, from the choice of a basis $\{z_0,\,z_1,\cdots,\,z_n\}$ of $\F^{n+1}$, we take the corresponding elements indexed by $I_v$ to obtain a basis of the vector space $\F^{\# I_v}$, for each $v\in Q_0$. We define, for each arrow $a\in Q_1$ such that $M(S)_{t(a)}\neq 0$ and $M(S)_{h(a)}\neq 0$, a morphism $f_a:M(S)_{t(a)}\lra M(S)_{h(a)}$ by
\begin{eqnarray*}
    f_a(z_i)=\left\{\begin{array}{ll}
         z_{i+1}& \mbox{if } s_{i+1}=a,  \\
         z_{i-1}& \mbox{if } s_{i}=a^{-1},  \\
         0& \mbox{otherwise.}  \\         
         \end{array}\right.
\end{eqnarray*}
It is easy to see that $M(S)=(M(S)_v,\,f_a)_{v\in Q_0,\,a\in Q_1}$ is a representation of the quiver $Q$ bounded by the relations $I$ and $M(S)\cong M(S')$ if and only if $S\sim_s S'$. The class by isomorphism of this modules are called {\it string modules}.

Let $\mathcal{S}'$ be the subset of $\mathcal{S}$ consisting of all non-trivial strings $S$ such that $S^n$ is defined for all $n\in \N$, and $S$ is not a power of a substring. We define an equivalence relation on $\mathcal{S}'$ as follows: $S\sim_r S'$ if and only if $S'$ is a cyclic permutation of $S$. We denote the set of representatives of the equivalence classes of $\mathcal{S}'/\sim_r$ as $\underline{\mathcal{S}'}$. 

Let $Z$ be a vector space and $\varphi:Z\lra Z$ be an automorphism. It is possible to prove that (see \cite{BR87}) the pair $(Z,\varphi)$ induce a $\F[T,T^{-1}]-$module structure over $Z$. We want to define a module $M(S,\,\varphi)$ associated to each string $S$ in this new class. As before, like for string modules, for each vertex $v\in Q_0$ we consider $I_v=u^{-1}(v)$, $I'_v=I_v\cap\{0,\,1,\cdots,\,n-1\}$ and $s_n:u(n-1)\lra u(0)$.

Let $\bigoplus_{i\in I'_v}Z_i$ be the $\F$-vector space, where $Z_i=Z$ for all $i\in I'_v$. Now, for each arrow $a\in Q_1$, we define $f_a:(\bigoplus Z_i)_{i\in I'_{t(a)}}\lra(\bigoplus Z_j)_{j\in I'_{h(a)}}$, whose domain and codomain are not null spaces. Thus, we denote by $f_i$, for all $1\leq i\leq n$, the following morphisms:
\begin{eqnarray*}
    f_i=\left\{\begin{array}{ll}
         \id:Z_{i-1}\lra Z_i& \mbox{if }s_i\in Q_1\mbox{ and } i<n, \\
         \id:Z_i\lra Z_{i-1}& \mbox{if }s^{-1}_i\in Q_1\mbox{ and } i<n, \\
         \varphi:Z_{n-1}\lra Z_0& \mbox{if }s_i\in Q_1\mbox{ and } i=n, \\
         \varphi^{-1}:Z_i\lra Z_{i-1}& \mbox{if }s^{-1}_i\in Q_1\mbox{ and } i=n. \\
    \end{array}\right.
\end{eqnarray*}
Once we have chosen an order for the direct sums $(\bigoplus Z_i)_{i\in I'_{t(a)}}$ and $(\bigoplus Z_j)_{j\in I'_{h(a)}}$ we can define a matrix
\begin{eqnarray*}
    g_{rs}=\left\{\begin{array}{ll}
         f_i&\mbox{if $t(s_i)$ is the $r$-th component of $(\bigoplus Z_i)_{i\in I'_{t(a)}}$}    \\
         & \mbox{and $h(s_i)$ is the $s$-th component of $(\bigoplus Z_j)_{j\in I'_{h(a)}}$,}\\
         0& \mbox{otherwise.}
    \end{array}\right.
\end{eqnarray*}
It is not difficult to verifies that $M(S,\varphi)$ is a representation of the quiver $Q$ bounded by the relations $I$ and $M(S,\varphi)\cong M(S',\varphi')$ if and only if $(Z,\varphi)$ and $(Z,\varphi')$ are isomorphic as $\F[T,T^{-1}]-$modules and $S\sim_r S'$. These modules are called {\it band modules}.

The main result related to the classification of indecomposable modules for string algebras is the following theorem of Butler and Ringel in \cite{BR87}.
\begin{teo}\label{teostringband}
    Let $\F Q/I$ be a string algebra of finite dimension. The string modules $M(S)$ with $S\in\underline{\mathcal{S}}$ and band modules $M(S',\varphi)$ with $S'\in\underline{\mathcal{S'}}$ and $\varphi:Z\lra Z$ be an automorphism on the vector space $Z$, provide a complete list of indecomposable $\F Q/I$-modules, up to isomorphism.
\end{teo}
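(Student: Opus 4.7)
The plan is to follow the original argument of Butler and Ringel \cite{BR87}, which splits into two parts: showing that the listed string and band modules are indecomposable and pairwise non-isomorphic modulo the equivalences $\sim_s$ and $\sim_r$, and showing that every indecomposable $\F Q/I$-module arises in this way.

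For the first direction, the main tool is the combinatorial description of morphisms between string modules as \emph{graph maps}, i.e.\ sums of elementary morphisms factoring through a common substring of $S$ and $T$ with prescribed boundary behaviour dictated by $Q_1$ and the relations $I$. Using conditions (S1)--(S3), one checks that $\End M(S)$ is local, so $M(S)$ is indecomposable, and an isomorphism $M(S)\cong M(S')$ forces $S\sim_s S'$ because it must identify the distinguished combinatorial bases up to reversal. An analogous argument describes the endomorphism algebra of $M(S,\varphi)$ in terms of the $\F[T,T^{-1}]$-algebra endomorphisms of $(Z,\varphi)$ modulo the cyclic ambiguity of $S$; therefore $M(S,\varphi)$ is indecomposable if and only if $(Z,\varphi)$ is indecomposable as an $\F[T,T^{-1}]$-module, and its isomorphism class is parametrized by $\sim_r$ together with $\F[T,T^{-1}]$-module isomorphism of $(Z,\varphi)$.

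For the completeness direction, the key technique is the functorial filtration method. To each formal letter at each vertex one associates a pair of subfunctors of the identity on $\F Q/I\mbox{-}\modu$; because the string-algebra axioms bound the number of arrows and admissible compositions at each vertex, these functors stratify any module $M$ into pieces controlled by the string combinatorics of $Q$. The successive quotients are either direct sums of elementary string pieces, or---when the governing word closes into a cycle---$\F[T,T^{-1}]$-modules from which the band datum $(S,\varphi)$ is recovered via the Jordan decomposition of $\varphi$.

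The main obstacle is showing that this filtration actually splits $M$ as a direct sum of string and band modules, rather than merely producing a filtration whose associated graded has this form. Finite-dimensionality of $\F Q/I$ ensures that the filtration terminates, and axioms (S1)--(S2) guarantee that the potential non-split extensions between consecutive strata can be resolved by explicit basis changes respecting the combinatorics of the underlying word. Once this direct-sum decomposition is established, indecomposability of $M$ forces it to be a single summand, completing the classification.
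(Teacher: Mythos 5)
The paper does not prove this statement at all: Theorem \ref{teostringband} is imported verbatim from Butler and Ringel \cite{BR87}, so there is no internal argument to compare yours against. Judged on its own terms, your outline correctly reproduces the architecture of the known proof: homomorphisms between string and band modules are described combinatorially (graph maps), from which one reads off that $\End M(S)$ is local and that $M(S)\cong M(S')$ forces $S\sim_s S'$, with the band case reducing to $\F[T,T^{-1}]$-module theory; and completeness is obtained by the functorial filtration method going back to Gelfand--Ponomarev and Ringel. Your remark that $M(S',\varphi)$ is indecomposable precisely when $(Z,\varphi)$ is an indecomposable $\F[T,T^{-1}]$-module is also the correct sharpening of the loose statement in the theorem.

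That said, as a proof rather than a roadmap, your proposal has a genuine gap exactly where you flag ``the main obstacle'': you assert that the potential non-split extensions between consecutive strata of the filtration ``can be resolved by explicit basis changes respecting the combinatorics,'' but this is the entire technical content of the completeness half. The functorial filtration method does not merely stratify $M$; one must construct, for each vertex and each admissible word, a pair of subfunctors and then prove a splitting lemma showing that a complement to one filtration step can be chosen compatibly with all the others simultaneously --- this is where conditions (S1)--(S3) are really used, and where the argument occupies most of \cite{BR87}. Similarly, the claim that an isomorphism $M(S)\cong M(S')$ ``must identify the distinguished combinatorial bases up to reversal'' needs the full classification of graph maps, not just locality of the endomorphism ring. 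None of this is wrong in direction, but in its current form the proposal is a citation-shaped sketch of \cite{BR87} rather than a self-contained proof; since the paper itself only cites that reference, the appropriate resolution is to do the same rather than to claim an independent argument.
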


\subsection{About the $n$-crown quiver}\label{ncrown}

If $\mathcal{Q}=(Q_0,Q_1,h,t)$ is a quiver, then we call a vertex a {\it source} if it is not head of any arrow and it is a {\it sink} if it is not the tail of any arrow. Let $\mathcal{Q}_n$ be a quiver whose set of vertices $Q_0=U\cup W$ is such that $U=\{1,\,2,\cdots,\,n\}$ corresponds to the set of sources and $W=\{1',\,2',\cdots,\,n'\}$ corresponds to the set of sinks. The quiver $\mathcal{Q}_n$ is called a $n-${\it crown} if $n\geq 2$ and:
\begin{enumerate}
    \item[(C1)] There exists an arrow from $i$ to $j'$ if: $i>1$ and $i\leq j'\leq i+1$; or $i=1$ and $j'=1'$; or $i=n$ and $j'=1'$.
    \item[(C2)] If $n = 2$, no path from $1$ to $2'$ shares a vertex with any path starting at $2$ and ending at $1'$.
\end{enumerate}
For instance, the $2-$crown quiver $\mathcal{Q}_2$ is represented by
\begin{center}
    \begin{tikzpicture}
[->,>=stealth',shorten >=1pt,auto,node distance=2cm,thick,main node/.style=]
\node (1) at (0,2) {$1$}; 
\node (2) at (0,0) {$1'$}; 
\node (3) at (2,2) {$2$}; 
\node (4) at (2,0) {$2'$}; 
\path[every node/.style={font=\sffamily\small}]     
(1) edge node   {} (2)         
     edge node  {} (4)            
(3) edge node  {} (2)              
   edge node   {} (4);
\end{tikzpicture}
\end{center}

For $n>2$, the $n$-crown quiver $\mathcal{Q}_{n}$ corresponds to the Figure \ref{figurecrown}.
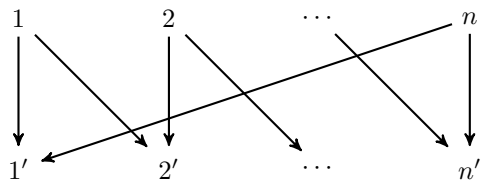
\begin{figure}[h]
\begin{tikzpicture}
[->,>=stealth',shorten >=1pt,auto,node distance=2cm,thick,main node/.style=]
  \node[main node] (1) {$1$};   
  \node[main node] (6) [below of =1]{$1'$};   
  \node[main node] (2) [right of=1] {$2$};
  \node[main node] (7) [below of =2]{$2'$};   
  \node[main node] (3) [right of=2] {$\cdots$};
    \node[main node] (8) [below of =3]{$\cdots$};
  \node[main node] (4) [right of=3] {$n$};
  \node[main node] (5) [below of=4] {$n'$};
\path[every node/.style={font=\sffamily\small}]     
(1) edge node   {} (7)         
     edge node         {} (6)           
(2) edge node  {} (7)              
     edge node         {} (8)
(4) edge node  {} (5)              
   edge node         {} (6)
(3) edge node  {} (5)              ; 
\end{tikzpicture}
\caption{The $n$-crown quiver $\mathcal{Q}_{n}$, for $n\geq 3$.} 
\label{figurecrown}
\end{figure}

If we consider the $n$-crown quiver $\mathcal{Q}_{n}$ and we denote by $\A_i:i\rightarrow i'$ and $\A_{i'}:i-1\rightarrow i'$ the arrows connecting source and sinks, then, it is not difficult to verifies that, the strings with length $j$, where $1\leq j\leq n$, whose starting vertex is $1'$ are defined by
\begin{eqnarray*}
    s_{1',j}:=\left\{\begin{array}{ll}
         \A_1^{-1}\A'_2\A^{-1}_2\cdots\A_{\frac{j+1}{2}}^{-1}&\mbox{ if $j$ is odd,}  \\
         &\\
         \A_1^{-1}\A'_2\A^{-1}_2\cdots\A'_{\frac{j+2}{2}}&\mbox{ if $j$ is even.} 
    \end{array}\right.
\end{eqnarray*}
Similarly, it is not difficult to verifies that, the strings with length $j$, where $1\leq j\leq n$, whose starting vertex is $1$ are defined by
\begin{eqnarray*}
    s_{1,j}:=\left\{\begin{array}{ll}
         \A'_2\A^{-1}_2\cdots\A'_{\frac{j+3}{2}}&\mbox{ if $j$ is odd,}  \\
         &\\
         \A'_2\A^{-1}_2\cdots\A_{\frac{j+2}{2}}^{-1}&\mbox{ if $j$ is even.} 
    \end{array}\right.
\end{eqnarray*}
Now, if we consider $\G_n:\{1,\cdots,\,n\}\lra \{1,\cdots,\,n\}$ a permutation defined by $\G_n(1)=n$ and  $\G_n(i)=i-1$ for all $1<i\leq n$, then, for $1\leq i\leq n$, we have that
\begin{eqnarray*}
    \G_n^{n+1-i}(1)=\G_n^{n-i}(n)=n-(n-i)=i.
\end{eqnarray*}
In consequence, the strings with length $j$, where $1\leq j\leq n$, whose starting vertices are $i'$ and $i$, respectively, are denoted by

\begin{eqnarray*}
    s_{i',j}&:=&\left\{\begin{array}{ll}
         \A_{\G_n^{n+1-i}(1)}^{-1}\A'_{\G_n^{n+1-i}(2)}\cdots\A_{\G_n^{n+1-i}(\frac{j+1}{2})}^{-1}&\mbox{ if $j$ is odd,}  \\
         &\\
         \A_{\G_n^{n+1-i}(1)}^{-1}\A'_{\G_n^{n+1-i}(2)}\cdots\A'_{\G_n^{n+1-i}(\frac{j+2}{2})}&\mbox{ if $j$ is even,} 
    \end{array}\right.
\\
    s_{i,j}&:=&\left\{\begin{array}{ll}
         \A'_{\G_n^{n+1-i}(2)}\A^{-1}_{\G_n^{n+1-i}(2)}\cdots\A'_{\G_n^{n+1-i}(\frac{j+3}{2})}&\mbox{ if $j$ is odd,}  \\
         &\\
         \A'_{\G_n^{n+1-i}(2)}\A^{-1}_{\G_n^{n+1-i}(2)}\cdots\A_{\G_n^{n+1-i}(\frac{j+2}{2})}^{-1}&\mbox{ if $j$ is even.} 
    \end{array}\right.
\end{eqnarray*}

We claim that each vertex appears at most once in a string of length $j$. Indeed, we have:
\begin{enumerate}
    \item For $j$ odd, $v=h(\A)$ or $v=t(\A)$ for some $\A\in s_{i',j}$ if and only if $v\in U_{i,\,\frac{j+1}{2}}\cup W_{i,\,\frac{j+1}{2}}$,
    \item for $j$ even, $v=h(\A)$ or $v=t(\A)$ for some $\A\in s_{i',j}$ if and only if $v\in U_{i,\,\frac{j}{2}}\cup W_{i,\,\frac{j+2}{2}}$,
    \item for $j$ odd, $v=h(\A)$ or $v=t(\A)$ for some $\A\in s_{i,j}$ if and only if $v\in U_{i,\,\frac{j+1}{2}}\cup W_{i,\,\frac{j+3}{2}}-\{i'\}$,
    \item for $j$ even, $v=h(\A)$ or $v=t(\A)$ for some $\A\in s_{i',j}$ if and only if $v\in U_{i,\,\frac{j+2}{2}}\cup W_{i,\,\frac{j+2}{2}}-\{i'\}$,
\end{enumerate}
where the sets of sources and sinks are, respectively, $U_{i,l}:=\left\{i,\,\G_n^{n+1-i}(2),\cdots,\G_n^{n+1-i}\left(l\right)\right\}\subseteq U$ and $W_{i,l}:=\left\{i',\,\G_n^{n+1-i}(2)',\cdots,\G_n^{n+1-i}\left(l\right)'\right\}\subseteq W$, with $1\leq i,\, l\leq n$.

Therefore, for each string $ s_{i',j}$, the corresponding string module $M(s_{i',j})=(M_v, f_a)_{v\in Q_0,a\in Q_1}$ is defined by:
\begin{eqnarray*}
    M_v&=&\left\{\begin{array}{cl}
     k_n &\mbox{if $j$ is odd and $v\in U_{i,\,\frac{j+1}{2}}\cup W_{i,\,\frac{j+1}{2}}$;}  \\
     k_n &\mbox{if $j$ is even and $v\in U_{i,\,\frac{j}{2}}\cup W_{i,\,\frac{j+2}{2}}$;}  \\
     0 & \mbox{otherwise;}
\end{array}\right.\\
    f_a&=&\left\{\begin{array}{cl}
     1& \mbox{if }a\in s_{i',j}\mbox{ or  }a^{-1}\in s_{i',j}; \\
     0 & \mbox{otherwise;}
\end{array}\right.
\end{eqnarray*}
Analogously, for each string $ s_{i,j}$, the corresponding string module $M(s_{i,j})=(M_v, f_a)_{v\in Q_0,a\in Q_1}$ is defined by:
\begin{eqnarray*}
    M_v&=&\left\{\begin{array}{cl}
     k_n &\mbox{if $j$ is odd and $v\in U_{i,\,\frac{j+1}{2}}\cup W_{i,\,\frac{j+3}{2}}-\{i'\}$;}  \\
      k_n &\mbox{if $j$ is even and $v\in U_{i,\,\frac{j+2}{2}}\cup W_{i,\,\frac{j+2}{2}}-\{i'\}$;}  \\
     0 & \mbox{otherwise;}
\end{array}\right.\\
    f_a&=&\left\{\begin{array}{cl}
     1& \mbox{if }a\in s_{i,j}\mbox{ or  }a^{-1}\in s_{i,j}; \\
     0 & \mbox{otherwise.}
\end{array}\right.
\end{eqnarray*}
Thus, under the notations in Section \ref{quivers}, we obtain that
\begin{eqnarray*}
   \dim(M(s_{i',j}))&=&\left\{\begin{array}{ll}
        \sum_{v\in U_{i,\,\frac{j+1}{2}}}e_v+\sum_{v\in W_{i,\,\frac{j+1}{2}}}e_v & \mbox{ if $j$ is odd;} \\
         \sum_{v\in U_{i,\,\frac{j}{2}}}e_v+\sum_{v\in W_{i,\,\frac{j+2}{2}}}e_v & \mbox{ if $j$ is even};
    \end{array}\right.\\
    \dim(M(s_{i,j}))&=&\left\{\begin{array}{ll}
        \sum_{v\in U_{i,\,\frac{j+1}{2}}}e_v+\sum_{v\in W_{i,\,\frac{j+3}{2}}-\{i'\}}e_v & \mbox{ if $j$ is odd;} \\
         \sum_{v\in U_{i,\,\frac{j+2}{2}}}e_v+\sum_{v\in W_{i,\,\frac{j+2}{2}}-\{i'\}}e_v & \mbox{ if $j$ is even.}
    \end{array}\right.
\end{eqnarray*}

Hence, the Cartan matrix of $\mathcal{Q}_n$, indexed by $U\cup W$, is defined by:
$$a_{ij}=\left\{\begin{array}{cl}
     2&\mbox{if }i=j,\\
     -1&\mbox{ if $i\in U$ and j=i' or j=(i+1)',}\\
     -1&\mbox{ if $j\in U$ and i=j' or i=(j+1)',}\\
     0&\mbox{otherwise.}
\end{array}\right.$$

In consequence, it is not difficult to prove that $\delta:=\sum_{i\in U}e_i+\sum_{i'\in W}e_{i'}$ is a minimal (positive) imaginary root and, for $a:=\sum_{i\in U}a_ie_i+\sum_{i\in W}a'_ie_i\in \Z^{Q_0}$, we have that:
\begin{eqnarray}\label{deltadef}
    \langle\D,a\rangle=\sum_{i\in U}a_i+\sum_{i\in W}a'_i-\sum_{i\in U}(a_i'+a'_{i+1}).
\end{eqnarray} 

\begin{lema}\label{lemmastring}
Let $S$ be a string of length $1\leq j\leq n$ in the quiver $\mathcal{Q}_n$ and let $M(S)$ be the corresponding string module. Then,  
\begin{enumerate}
\item $\defect(M(S))=0$ if $j$ is odd,
\item $\defect(M(S))\neq0$ if $j$ is even.
\end{enumerate}  
\end{lema}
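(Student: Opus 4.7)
The plan is to apply the defect formula~(\ref{deltadef}) directly to the explicit dimension vectors of $M(s_{i',j})$ and $M(s_{i,j})$ listed just before the statement, and then to inspect the four sub-cases produced by the parity of $j$ together with the source-versus-sink type of the starting vertex of $S$.

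My first step will be to simplify~(\ref{deltadef}) for an arbitrary $a=\sum_{i\in U} a_i e_i+\sum_{i\in W} a'_i e_{i'}$. Because the $n$-crown is cyclic (each source $i\in U$ is the tail of exactly two arrows, going to $i'$ and to $(i+1)'$, with the convention $(n+1)':=1'$), a reindexing gives
$$\sum_{i\in U}(a'_i+a'_{i+1})=2\sum_{i\in W}a'_i,$$
so~(\ref{deltadef}) collapses to
$$\langle\D,a\rangle=\sum_{i\in U}a_i-\sum_{i\in W}a'_i.$$
In other words, for any representation of $\mathcal{Q}_n$ the defect equals the total dimension on the sources minus the total dimension on the sinks.

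The second step invokes the observation, established immediately above the lemma, that each vertex appears at most once along a string of length $j\leq n$. Hence every nonzero component of $M(S)$ has dimension one over $k_n$, and $\defect(M(S))$ reduces to $|U\cap\supp M(S)|-|W\cap\supp M(S)|$. From here the argument is pure bookkeeping using $|U_{i,l}|=|W_{i,l}|=l$ together with the four explicit dimension vectors. Briefly: for $s_{i',j}$ with $j$ odd one obtains $\tfrac{j+1}{2}-\tfrac{j+1}{2}=0$; for $s_{i',j}$ with $j$ even, $\tfrac{j}{2}-\tfrac{j+2}{2}=-1$; for $s_{i,j}$ with $j$ odd, $\tfrac{j+1}{2}-(\tfrac{j+3}{2}-1)=0$; and for $s_{i,j}$ with $j$ even, $\tfrac{j+2}{2}-(\tfrac{j+2}{2}-1)=1$. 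The two $j$-odd cases yield item~(a) and the two $j$-even cases yield item~(b).

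The only substantive manipulation is the cyclic reindexing that simplifies the Euler pairing; everything else is arithmetic, and the main obstacle is simply keeping the four index ranges straight.
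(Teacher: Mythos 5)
Your proof is correct and follows essentially the same route as the paper's: both reduce $\defect(M(S))$ via the Euler form to the difference between the number of sources and the number of sinks in the support of the string, and then check the four parity cases. Your cyclic reindexing $\sum_{i\in U}(a'_i+a'_{i+1})=2\sum_{i\in W}a'_i$ is in fact the cleaner way to justify that reduction (the paper's displayed computation writes this term as $2|U_S|$ rather than $2|W_S|$, which only flips the sign of the final expression and does not affect the conclusion).
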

\begin{proof}
Let $S=s_1\cdots s_j$ be a string of $\mathcal{Q}_n$ of length $j$, with $1\leq j\leq n$. We denote by 
    $$
    U_S=\{v\in U\mid v=t(s_1)\ \text{or}\ v=h(s_i)\ \text{ for some }\ 1\leq i\leq j\}
    $$
the corresponding set of sources of $S$ and by
$$
 W_S=\{v\in W\mid v=t(s_1)\ \text{or}\ v=h(s_i)\ \text{ for some}\ 1\leq i\leq j\}
$$
the corresponding set of sinks of the string $S$. Thus, 
$$
 \defect(M(S))=\langle\D,\dim(M(S))\rangle=\sum_{i\in U_S}1+\sum_{i\in W_S}1-\sum_{i\in U_S}2=|U_S|+|W_S|-2|U_S|=|W_S|-|U_S|.
$$
In consequence, if either $S=s_{i',j}$ or $S=s_{i,j}$, then it is easy to check that, $|W_S|=|U_S|$, if $j$ is odd, and $|W_S|\neq|U_S|$, if $j$ is even. Therefore, $\defect(M(S))=0$ if $j$ is odd, and  $\defect(M(S))\neq0$ if $j$ is even, which completes the proof of lemma.
\end{proof}

For the case of bands, it is easy to check that there exists a unique band representant in $\underline{\mathcal{S'}}$ in the quiver $\mathcal{Q}_n$. In fact, it is sufficient consider $S'=(\A'_1)^{-1}\A_n(\A'_n)^{-1}\cdots(\A'_2)^{-1}\A'_1$. In this case, if $V$ is a $k_n$ vector space and $\varphi$ is an automorphism of $V$, the corresponding band module is defined by $M(S',\varphi)=(M_v,f_a)_{v\in Q_0,a\in Q_1}$, where $M_v=V$ for each $v\in Q_0$, and the maps 
$$f_a=\left\{\begin{array}{cl}
     \varphi&\mbox{if }a=\A_1,   \\
     \id_V&\mbox{otherwise,} 
\end{array}\right.$$
$M(S',\varphi)$ is represented in the Figure \ref{figureband}.
\begin{figure}
\begin{center}
\begin{tikzpicture}
[->,>=stealth',shorten >=1pt,auto,node distance=2cm,thick,main node/.style=]
  \node[main node] (1) {$V$};   
  \node[main node] (6) [below of =1]{$V$};   
  \node[main node] (2) [right of=1] {$V$};
  \node[main node] (7) [below of =2]{$V$};   
  \node[main node] (3) [right of=2] {$\cdots$};
    \node[main node] (8) [below of =3]{$\cdots$};
  \node[main node] (4) [right of=3] {$V$};
  \node[main node] (5) [below of=4] {$V$};
\path[every node/.style={font=\sffamily\small}]     
(1) edge node   {$\varphi$} (6)         
     edge node         {$\id_V$} (7)           
(2) edge node  {$\id_V$} (7)              
(4) edge node  {$\id_V$} (5)              
   edge node         {$\id_V$} (6)
   (3) edge node  {$\id_V$} (5) ;
\end{tikzpicture}
\end{center}
\caption{Let $V$ be a $k_n$-vector space and $\varphi$ is an automorphism of $V$, the band module $M(S',\varphi)$.} 
\label{figureband}
\end{figure}
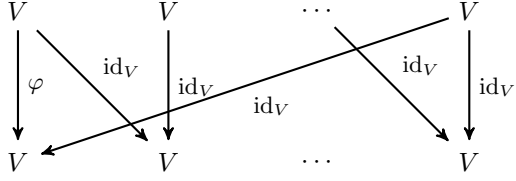

\begin{lema}\label{lemmabands}
Let $S'$ be the unique band, up to isomorphism, in the quiver $\mathcal{Q}_n$ and let $M(S',\varphi)$ be the corresponding band module. Then,
$\defect(M(S',\varphi))=0$.
\end{lema}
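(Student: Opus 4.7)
The plan is to apply the defect formula (\ref{deltadef}) directly, exploiting the fact that every vertex of $\mathcal{Q}_n$ carries the \emph{same} vector space in a band module.

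First, I would read off the dimension vector of $M(S',\varphi)$. By construction $M_v=V$ for every $v\in Q_0=U\cup W$, so
$$
\dim(M(S',\varphi))=\sum_{i\in U}(\dim V)\,e_i+\sum_{i'\in W}(\dim V)\,e_{i'}=(\dim V)\,\delta,
$$
i.e.\ the dimension vector is a scalar multiple of the minimal positive imaginary root $\delta$.

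Next, I would substitute $a=(\dim V)\,\delta$ into formula (\ref{deltadef}). Writing $a_i=\dim V$ for $i\in U$ and $a'_i=\dim V$ for $i\in W$, we obtain
$$
\defect(M(S',\varphi))=\langle\delta,\dim M(S',\varphi)\rangle=(\dim V)\Bigl(\sum_{i\in U}1+\sum_{i\in W}1-\sum_{i\in U}(1+1)\Bigr)=(\dim V)(n+n-2n)=0.
$$

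There is really no obstacle here: the statement is essentially the observation that $\delta$ is isotropic for the Euler form (which is exactly the content of it being an imaginary root), and the band module realizes the dimension vector $(\dim V)\,\delta$. The only small bookkeeping point is to make sure the indexing in (\ref{deltadef}) is used correctly, namely that the double sum $\sum_{i\in U}(a_{i'}+a'_{i+1})$ counts each sink $i'\in W$ exactly twice (once as $a_{i'}$ from source $i$ and once as $a'_{(i-1)+1}$ from source $i-1$, cyclically), which matches the two arrows entering each sink in $\mathcal{Q}_n$ and produces the cancellation $2n-2n=0$.
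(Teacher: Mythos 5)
Your proof is correct and follows essentially the same route as the paper: both identify $\dim(M(S',\varphi))=(\dim V)\delta$ and reduce the claim to $\langle\delta,\delta\rangle=0$, which you verify explicitly from formula (\ref{deltadef}) while the paper simply asserts it. The extra bookkeeping you include is accurate but not a different argument.
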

\begin{proof}
In this case, we have that $\dim(M(S',\varphi))=\sum_{v\in Q_0}\dim(V)e_v=\dim(V)\delta$. Thus,
$$
\defect(M(S',\varphi))=\langle\delta,\dim(M(S',\varphi))\rangle=\langle\delta,\dim(V)\delta\rangle=\dim(V)\langle\delta,\delta\rangle=0.
$$
\end{proof}

We will see in \ref{isokd} that $k_{d}\otimes_{k_n} \Lambda_n\cong k_{d}\mathcal{Q}_{n}$ as $k_d$-algebras, for $n\in \Z_{>0}$ and $d$ be a multiple of $n$. The path algebra $k_{d}\mathcal{Q}_{n}$ admits a $k_n$-linear automorphism $\G_{d,n }$ of order $d$, which is compatible with the action of $\Si_{d}$ on $k_{d}\otimes\Lambda_n$. Indeed, $\G_{d,n}: k_d  \mathcal{Q}_n\lra k_d  \mathcal{Q}_n$ is the authomorphism of $k_d$-algebras defined by $k\rho\mapsto\Si_d(k)\G_n(\rho)$, where $\G_n$ is the automorphism given by the ``reverse shift'' shown in Figure \ref{figuregamma}.

\begin{figure}
\begin{center}
 \begin{tikzpicture}
[->,>=stealth',shorten >=1pt,auto,node distance=2cm,thick,main node/.style=]
  \node[main node] (1) {$1$};   
  \node[main node] (6) [below of =1]{$1'$};   
  \node[main node] (2) [right of=1] {$2$};
  \node[main node] (7) [below of =2]{$2'$};   
  \node[main node] (3) [right of=2] {$\cdots$};
    \node[main node] (8) [below of =3]{$\cdots$};
  \node[main node] (4) [right of=3] {$n$};
  \node[main node] (5) [below of=4] {$n'$};
\path[every node/.style={font=\sffamily\small}]     
(1) edge node   {} (7)         
     edge node         {} (6)           
(2) edge node  {} (7)              
     edge node         {} (8)
(4) edge node  {} (5)              
   edge node         {} (6)
(3) edge node  {} (5) ; 
\draw[->, red,  dashed] (4) to[bend right]  (3);
\draw[->, red,  dashed] (3) to[bend right]  (2);
\draw[->, red,  dashed] (2) to[bend right]  (1);
\draw[->, red,  dashed] (1) to[bend left]  (4);
\draw[->, red,  dashed] (5) to[bend left]  (8);
\draw[->, red,  dashed] (8) to[bend left]  (7);
\draw[->, red,  dashed] (7) to[bend left]  (6);
\draw[->, red,  dashed] (6) to[bend right]  (5);
\end{tikzpicture}
 \end{center}
\caption{$\G_n$ is the ``reverse shift'' automorphism of order $n$ on the quiver $\mathcal{Q}_n$.} 
\label{figuregamma}
\end{figure}
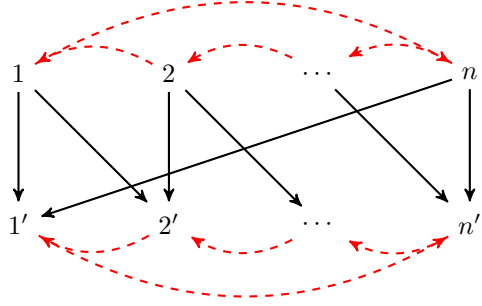


\section{Representations of twisted quivers and Galois descent}\label{main}

\subsection{Quivers with automorphisms}\label{isokd}
Our primary objective in this subsection is to establish the isomorphism of the $k_d$-algebras $k_d\otimes_k\Lambda_n$ and the path algebra $k_d\mathcal{Q}_n$. For this purpose, let $k_n=\C\parEn$ denote the degree $n$ extension of the field $k$ of Laurent series over the complex numbers. If we consider $k_n$ and $k_m$ as two extensions of $k$, the following proposition show that $k_n\otimes_kk_m$ is an extension of the field $k$ isomorphic to $\mcd(n,m)$ copies of $k_{\mcm(n,m)}$. 
\begin{prop}\label{obs1}
If $k_n=\C\parEn$, for any integer $n>0$, then  $k_n\otimes_k k_m\cong \underbrace{k_l\times\cdots\times k_l}_{g\text{-times}}$, where $l=\mcm(m,n)$ and $g=\mcd(m,n)$.
\end{prop}
\begin{proof}
See \cite[Section 2]{GR22}.
\end{proof}

In particular, if $d$ is a multiple of $n$, then we obtain the isomorphism $k_d\otimes_k k_n\cong \underbrace{k_d\times \cdots\times k_d}_{n-\text{times}}$ as $k_d$-algebras.

For every positive integer $n$, it is straightforward to verify that the set 
\begin{equation}\label{eq:basis}
  \left\{\B_j=\frac{1}{n\E}\sum_{l=0}^{n-1}\E^{\frac{n-l}{n}}\otimes\Si^{j-1}(\E^{\frac{l}{n}})\bigm\vert\,1\leq j\leq n\right\},  
\end{equation}

forms a basis of $k_n\otimes k_n$ as $k_n$-vector space. Additionally, the $\B_j$'s satisfy the equations $\B_j\B_l=\D_{jl}\B_j$, for all $1\leq j, l\leq n$. 
From of the definition $\Lambda_n=\left[\begin{array}{cc} k_n& k_n\oplus\,^{\Si_n}k_n\\0& k_n\end{array}\right]$, we obtain the isomorphism
$$
k_d\otimes_k\Lambda_n\cong \left[\begin{array}{cc} k_d\otimes k_n& k_d\otimes( k_n\oplus\,^{\Si_n}k_n)\\0& k_d\otimes k_n\end{array}\right]
$$
as $k_d$-algebras. Consequently, we can construct the sets of linearly independent matrices for this tensor product from the basis in (\ref{eq:basis}) in the following straightforward manner: 
\begin{eqnarray*}
\mathcal{B}_1&=& \left\{\B_{j1}=\frac{1}{n\E}\sum_{l=0}^{n-1}\E^\frac{n-l}{n}\otimes\Si^{j-1}(\E^\frac{l}{n})E_{11}\bigm\vert\,1\leq j\leq n\right\},\\
\mathcal{B}_2&=&\left\{\B_{j2}=\frac{1}{n\E}\sum_{l=0}^{n-1}\E^\frac{n-l}{n}\otimes\left[\begin{array}{cc} 0&(\Si^{j-1}(\E^\frac{l}{n}),0) \\0& 0\end{array}\right]\bigm\vert\,1\leq j\leq n\right\},\\
\mathcal{B}_3&=&\left\{\B'_{j1}=\frac{1}{n\E}\sum_{l=0}^{n-1}\E^\frac{n-l}{n}\otimes\left[\begin{array}{cc} 0&(0,\Si^{j-1}(\E^\frac{l}{n})) \\0& 0\end{array}\right]\bigm\vert\,1\leq j\leq n\right\},\\
\mathcal{B}_4&=& \left\{\B'_{j2}=\frac{1}{n\E}\sum_{l=0}^{n-1}\E^\frac{n-l}{n}\otimes\Si^{j-1}(\E^\frac{l}{n})E_{22}\bigm\vert\,1\leq j\leq n\right\},
 \end{eqnarray*}
where $E_{11}$ and $E_{22}$ are elements of the standard basis of $\M_{2\times 2}(k_d)$. Thus, the union of these four sets
\begin{eqnarray} \label{base1}
    \calb:=\calb_1\cup\calb_2\cup\calb_3\cup\calb_4,
\end{eqnarray}
forms naturally a basis of $k_d\otimes_k\Lambda_n$ as a $k_d$-algebra.

\begin{teo}\label{teoiso}
For a positive integer $n$ and a multiple $d$ of $n$, the $k_d$-algebras $k_d\otimes_k\Lambda_n$ and $k_d\mathcal{Q}_n$ are isomophic (as $k_d$-algebras), where $\Lambda_n=\left[\begin{array}{cc}
k_n&k_n\oplus\,^{\Si_n}k_n\\0&k_n\end{array}\right]$ and $\mathcal{Q}_n$ is the $n$-crown quiver (see Figure \ref{figurecrown}).
\end{teo}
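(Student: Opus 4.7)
The plan is to construct the isomorphism explicitly by mapping the basis $\calb$ from (\ref{base1}) onto the vertex idempotents and arrows of $k_d\mathcal{Q}_n$. First, Proposition \ref{obs1} gives $k_d\otimes_k k_n\cong k_d^n$ as $k_d$-algebras. A direct computation using $\Si_n^{j-1}(\E^{l/n})=\zeta_n^{(j-1)l}\E^{l/n}$ shows that the elements $\B_j$ in (\ref{eq:basis}) are precisely the primitive orthogonal idempotents of this product: $\B_j\B_l=\D_{jl}\B_j$, $\sum_j\B_j=1\otimes 1$, and $(1\otimes y)\B_j=(\Si_n^{j-1}(y)\otimes 1)\B_j$ for every $y\in k_n$. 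Transported to the matrix algebra $k_d\otimes_k\Lambda_n$, the subfamilies $\calb_1=\{\B_{j1}\}$ and $\calb_4=\{\B'_{j2}\}$ together give a complete set of $2n$ primitive orthogonal idempotents summing to the identity, which we assign to the source vertices $\{1,\ldots,n\}$ and sink vertices $\{1',\ldots,n'\}$ of $\mathcal{Q}_n$, respectively.

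The Jacobson radical of $k_d\otimes_k\Lambda_n$ is the upper-right block $E_{11}(k_d\otimes_k\Lambda_n)E_{22}\cong k_d\otimes_k(k_n\oplus{}^{\Si_n}k_n)$ and it squares to zero. To map its basis $\calb_2\cup\calb_3$ to the arrows of $\mathcal{Q}_n$, I would determine, for each element $a$, the unique pair $(i,l)$ satisfying $\B_{i1}\,a\,\B'_{l2}=a$. On the untwisted summand this is routine: left and right $k_n$-multiplications agree, so $\B_{i1}\cdot\B_{j2}\cdot\B'_{l2}=\D_{ij}\D_{jl}\B_{j2}$, i.e.\ each $\B_{j2}\in\calb_2$ realizes an arrow $j\to j'$. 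The crucial step is the analogous calculation on the twisted summand ${}^{\Si_n}k_n$: here the left $k_n$-action carries an extra $\Si_n$, so on this block the left multiplication by $\B_j$ coincides with the standard (untwisted) left multiplication by $(\id\otimes\Si_n)(\B_j)$. From the explicit formula for $\B_j$ one verifies that $(\id\otimes\Si_n)(\B_j)=\B_{j+1}$ with indices read cyclically modulo $n$; this cyclic shift is the heart of the argument. It then follows that $\B_{i1}\cdot\B'_{j1}\cdot\B'_{l2}=\D_{i+1,j}\D_{jl}\B'_{j1}$, so each $\B'_{j1}\in\calb_3$ is an arrow from the source $j-1\pmod n$ to the sink $j'$. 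Collecting the two families, each source $i$ acquires exactly two outgoing arrows, to $i'$ and to $(i+1)'\pmod n$, which is precisely the arrow structure of the $n$-crown.

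Finally, all remaining products among elements of $\calb$ vanish (radical-times-radical is zero, and the diagonal-times-off-diagonal from the opposite side is also zero by the orthogonality relations), so the multiplication table of $\calb$ matches exactly the path-algebra relations of $k_d\mathcal{Q}_n$. Sending vertex idempotents to $\calb_1\cup\calb_4$ and arrows to $\calb_2\cup\calb_3$ via the correspondence above extends uniquely to a $k_d$-algebra homomorphism $k_d\mathcal{Q}_n\to k_d\otimes_k\Lambda_n$, which is an isomorphism by comparing $k_d$-dimensions ($4n$ on each side). The main obstacle is identifying the Galois shift $\B_j\mapsto\B_{j+1}$ produced by $\Si_n$ on the twisted summand: it is this shift that bends what would otherwise be $n$ disjoint Kronecker pairs into the crown shape of $\mathcal{Q}_n$, and once it is pinned down the remaining verifications are purely formal.
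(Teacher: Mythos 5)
Your proposal is correct and follows essentially the same route as the paper: both expand $k_d\otimes_k\Lambda_n$ in the basis $\calb=\calb_1\cup\calb_2\cup\calb_3\cup\calb_4$, read off the multiplication table (idempotents from $\calb_1\cup\calb_4$, arrows from $\calb_2\cup\calb_3$, with the cyclic shift on the twisted summand producing the crown shape), and send these basis elements to the vertices and arrows of $\mathcal{Q}_n$ — you merely supply the verifications the paper leaves as ``easy to verify,'' together with the dimension count. The only slip is the intertwining identity $(1\otimes y)\B_j=(\Si_n^{j-1}(y)\otimes 1)\B_j$, whose exponent should be $n+1-j$ (compare Lemma \ref{lemma2.1}); this is harmless, since the crown structure rests on the correct identity $(\id\otimes\Si_n)(\B_j)=\B_{j+1}$, and your resulting relation $\B_{i1}\B'_{j1}=\D_{i+1,j}\B'_{j1}$ is in fact the one consistent with the arrow $\A_{j'}\colon (j-1)\to j'$.
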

\begin{proof}
Consider $\calb$, the basis of $k_d\otimes_k\Lambda_n$, in equation (\ref{base1}). It is straightforward to see that, for every $1\leq i, j\leq n$:
$$
\B_{j1}\B_{i2}=\D_{j,i}\,\B_{i2};\quad \B_{j1}\B'_{i1}=\D_{j-1,i}\,\B'_{i1};\quad \B_{j2}\B'_{i2}=\D_{j,i}\,\B_{j2};\quad \B'_{j1}\B'_{i2}=\D_{j,i}\,\B'_{j1}.
$$
Consequently, the morphism $k_d\otimes_k\Lambda\longrightarrow  k_d \mathcal{Q}_n$, defined by the assignment
$$
\begin{array}{cccc}\B_{j1}\mapsto j,& \B_{j2}\mapsto\A_{j},& \B'_{j1}\mapsto\A_{j'},&\B'_{j2}\mapsto j'\end{array}
$$

is an isomorphism of $k_d$-algebras, as it is easy to verify. Here, $j$ (resp. $j'$) is the $j$-th vertex (resp. $j'$-th vertex) and $\A_i:i\rightarrow i'$ and $\A_{i'}:(i-1)\rightarrow i'$ are arrows in the path algebra $k_d \mathcal{Q}_n$ of the $n$-crown $\mathcal{Q}_n$.
\end{proof}

Our second objective in this subsection is to show that Galois group $\langle\Si_d\rangle$ acts on $k_d\otimes \Lambda_n$. Indeed, this action is induced by the $k$-automorphism defined on $\calb$, $\Si_d\otimes\id: k_d\otimes_k\Lambda_n\rightarrow k_d\otimes_k\Lambda_n$, as follows:
\begin{eqnarray*}
  (\Si_d\otimes\id)(\B_{jk})=\B_{j-1,\,k},\mbox{ and } (\Si_d\otimes\id)(\B'_{jk})=\B'_{j-1,\,k}, \,1\leq k\leq 2.
\end{eqnarray*}
Thus, allow us define an action of the group $\langle\Si_d\rangle$ on $k_d \mathcal{Q}_n$ through the automorphism:
\begin{eqnarray*}
\G_{d,n}: k_d  \mathcal{Q}_n & \longrightarrow & k_d  \mathcal{Q}_n\\
k\rho & \longmapsto & \Si_d(k)\G_n(\rho),
\end{eqnarray*}
where $\G_n$ is the of order$-n$ automorphism of the $n$-crown $\mathcal{Q}_n$, see Figure \ref{figuregamma}. In this case the automorphism $\G_{d,n}$ involves $\G_n$, an automorphism of $\mathcal{Q}_n$, and  $\Si_d:k_d\lra k_d$, so we will said $\G_{d,n}$ is a ``reverse shift'' automorphism. 

The action of the group $\langle\Si_d\rangle$ on $k_d \mathcal{Q}_n$, is crucial for defining the skew group algebra $(k_d\mathcal{Q}_n)\langle\Si_d\rangle$ in Subsection \ref{GD}. 

\subsection{Invariant Representantions}\label{Invariantrep}
In this subsection, we investigate the behavior of the function resulting from applying $\id\otimes-$ to the morphism $\varphi_M\colon k_n^m\oplus \,^{\Si_n}k_n^{m}\lra k_n^{m}$. To begin, consider an arbitrary element $a\in k_n$. Following this, we can define the following $k_n$-morphisms $f_a:k_n\lra k_n$ and $g_a:k_n\lra\,^{\Si_n} k_n$ as follows $1\mapsto a$. The following lemma illustrates how the morphisms $\id\otimes f_a$ and $\id\otimes g_a$ act on the basis elements of $k_n\otimes k_n$.

\begin{lema}\label{lemma2.1}
Let $a\in k_n$, $f_a:k_n\lra k_n$ and $g_a:k_n\lra\,^{\Si_n} k_n$  be $k_n$-morphisms defined by $1\mapsto a$. We have $\id\otimes f_a:k_n\otimes k_n\lra k_n\otimes k_n$ and $\id\otimes g_a:k_n\otimes k_n\lra k_n\otimes\,^{\Si_n} k_n$, which are $k_n$-morphisms and act on the basis elements as follows:
 \begin{eqnarray*}
 ( \id\otimes f_a)(\B_j)&=&\Si_n^{n+1-j}(a)\B_j\\ 
  (\id\otimes g_a)(\B_j)&=&\Si_n^{n-j}(a)\B_{j+1}.
  \end{eqnarray*}
\end{lema}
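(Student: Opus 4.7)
My plan is to recognize that $(\id\otimes f_a)(\B_j)$ is multiplication by $(1\otimes a)$ in the commutative $k$-algebra $k_n\otimes_k k_n$, and then identify the scalar by which this multiplication acts on the primitive idempotent $\B_j$ via the decomposition from Proposition~\ref{obs1}.

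For the first formula, I would first note that, since $k_n$ is commutative and $f_a$ is $k_n$-linear with $f_a(1)=a$, one has $f_a(y)=ya$, so on pure tensors $(\id\otimes f_a)(x\otimes y)=(x\otimes y)(1\otimes a)$, and by linearity
\[
(\id\otimes f_a)(\B_j)=(1\otimes a)\,\B_j,
\]
with the product taken in $k_n\otimes_k k_n$. By Proposition~\ref{obs1}, this algebra is isomorphic to $k_n^{\,n}$, and the identities $\B_j\B_l=\D_{jl}\B_j$ recorded before the lemma exhibit $\{\B_1,\ldots,\B_n\}$ as the primitive orthogonal idempotents of this product decomposition. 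Hence multiplication by any $z\in k_n\otimes_k k_n$ acts on $\B_j$ as the scalar $\pi_j(z)$, where $\pi_j\colon k_n\otimes_k k_n\to k_n$ is the $j$-th component projection, characterized by $\pi_j(\B_l)=\D_{jl}$. The key calculation is to pin down $\pi_j$ explicitly: I would test the candidate $\pi_j(x\otimes y)=x\cdot\Si_n^{n+1-j}(y)$ on $\B_l$ and invoke the orthogonality $\frac{1}{n}\sum_{k=0}^{n-1}\zeta_n^{(l-j)k}=\D_{jl}$ to confirm $\pi_j(\B_l)=\D_{jl}$. Specializing to $z=1\otimes a$ then yields $\pi_j(1\otimes a)=\Si_n^{n+1-j}(a)$, hence the first identity.

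For the second formula, the twisted left action $y\ast x=\Si_n(y)x$ on $\,^{\Si_n}k_n$ combined with $k_n$-linearity of $g_a$ and $g_a(1)=a$ forces $g_a(y)=\Si_n(y)\,a$, so
\[
(\id\otimes g_a)(\B_j)=\frac{1}{n\E}\sum_{l=0}^{n-1}\E^{\frac{n-l}{n}}\otimes\Si_n^{j}(\E^{\frac{l}{n}})\,a.
\]
Identifying $k_n\otimes_k\,^{\Si_n}k_n$ with $k_n\otimes_k k_n$ as $k$-vector spaces, this element is precisely $(1\otimes a)\,\B_{j+1}$ (with indices of $\B$ read modulo $n$), and applying the first identity with $j+1$ in place of $j$ yields $\Si_n^{n-j}(a)\,\B_{j+1}$.

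The main obstacle is the explicit determination of the projection $\pi_j$---equivalently, identifying the exact Galois twist $\Si_n^{n+1-j}$ by which $1\otimes a$ acts on $\B_j$. The bookkeeping between the two tensor-factor $k_n$-actions, and in the $g_a$ case between the extra twist by $\Si_n$ and the shift $j\mapsto j+1$ of the idempotent, is where care is needed; once this is correctly pinned down, both identities fall out uniformly from the commutative-algebra structure on $k_n\otimes_k k_n$.
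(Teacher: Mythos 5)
Your proof is correct, and it takes a genuinely different route from the one in the paper. The paper proves the first identity by brute force: it expands $a=\sum_i a_i\E^{i/n}$, applies $\id\otimes f_a$ to the defining sum for $\B_j$, splits and re-indexes the resulting double sum, and uses $\zeta_n^n=1$ to reassemble $\Si_n^{n+1-j}(a)\B_j$; the second identity is then dispatched by saying the same argument works with $g_a(x)=\Si_n(x)a$. You instead exploit the ring structure of $k_n\otimes_k k_n$: since $f_a$ is right multiplication by $a$, the map $\id\otimes f_a$ is multiplication by $1\otimes a$, and because the $\B_j$ are a complete set of orthogonal idempotents forming a $k_n$-basis (both facts are recorded in the paper just before the lemma), each $z\B_j$ must equal $\pi_j(z)\B_j$ for the $j$-th component projection $\pi_j$; identifying $\pi_j(x\otimes y)=x\,\Si_n^{n+1-j}(y)$ reduces the whole lemma to the single character-orthogonality computation $\frac{1}{n}\sum_{r=0}^{n-1}\zeta_n^{(l-j)r}=\D_{jl}$. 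Your derivation of the second identity from the first, via $g_a(y)=\Si_n(y)a$ and the observation that the resulting sum is exactly $(1\otimes a)\B_{j+1}$, is also cleaner than the paper's ``similar argument'' remark and makes the index shift $j\mapsto j+1$ transparent. What your approach buys is conceptual clarity (the twist $\Si_n^{n+1-j}$ is forced by which projection $\B_j$ supports, rather than emerging from sum manipulations) and reusability of $\pi_j$ elsewhere; what the paper's approach buys is self-containedness at the level of elementary series manipulation, with no appeal to the idempotent/product decomposition. To make your writeup fully watertight you should record the one-line verification that $z\B_j\in k_n\B_j$ (write $z\B_j=\sum_l c_l\B_l$ in the $k_n$-basis and multiply by $\B_j$ to kill all $l\neq j$), but this is a routine remark, not a gap.
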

\begin{proof}
Let $\B_j=\frac{1}{n\E}\sum_{l=0}^{n-1}\E^\frac{n-l}{n}\otimes \Si_n^{j-1}(\E^\frac{l}{n})$ be an element of the basis $\mathcal{B}$ in \ref{eq:basis}. It follows that:
\begin{eqnarray*}
   (\id\otimes f_a)(\B_j)&=&\frac{1}{n\E}\sum_{l=0}^{n-1}\id(\E^\frac{n-l}{n})\otimes f_a( \Si_n^{j-1}(\E^\frac{l}{n}))\\
   &=&\frac{1}{n\E}\sum_{l=0}^{n-1}\E^\frac{n-l}{n}\otimes a\Si_n^{j-1}(\E^\frac{l}{n}),
\end{eqnarray*}   
Since $a\in k_n$ can be expressed as a linear combination of elements on $k$, we say $a=\sum_{i=0}^{n-1}a_i\E^\frac{i}{n}$, and since $\Si_n(\E^\frac{l}{n})=\zeta_n^{l}\E^\frac{l}{n}$, we obtain that

\begin{eqnarray*}
   \frac{1}{n\E}\sum_{l=0}^{n-1}\E^\frac{n-l}{n}\otimes a\Si_n^{j-1}(\E^\frac{l}{n})&=&\frac{1}{n\E}\sum_{l=0}^{n-1}\E^\frac{n-l}{n}\otimes \left(\sum_{i=0}^{n-1}a_i\E^\frac{i}{n}\right) \zeta_n^{l(j-1)}\E^\frac{l}{n}\\
     &=&\frac{1}{n\E}\sum_{l=0}^{n-1}\E^\frac{n-l}{n}\otimes \left(\sum_{i=0}^{n-1}a_i\E^\frac{i+l}{n}\right) \zeta_n^{l(j-1)}.\\
\end{eqnarray*}
Now, due to the fact that $a_i\in k$ and summations properties, we have
\begin{equation}\label{eq1}
\begin{split}
   \frac{1}{n\E}\sum_{l=0}^{n-1}\E^\frac{n-l}{n}\otimes \left(\sum_{i=0}^{n-1}a_i\E^\frac{i+l}{n}\right) \zeta_n^{l(j-1)}&=\frac{1}{n\E}\sum_{l=0}^{n-1}\sum_{i=0}^{n-1}a_i\E^\frac{n-l}{n}\otimes \E^\frac{i+l}{n} \zeta_n^{l(j-1)}\\
   &=\frac{1}{n\E}\sum_{i=0}^{n-1}\sum_{l=0}^{n-1}a_i\E^\frac{n-l}{n}\otimes \E^\frac{i+l}{n} \zeta_n^{l(j-1)}\\
   &=\sum_{i=0}^{n-1}a_i\frac{1}{n\E}\left(\sum_{l=0}^{n-1}\E^\frac{n-l}{n}\otimes\E^\frac{i+l}{n}\zeta_n^{l(j-1)}\right).  
\end{split}
\end{equation}
If we fix an index $i$, we have $\sum_{l=0}^{n-1}\E^\frac{n-l}{n}\otimes\E^\frac{i+l}{n}\zeta_n^{l(j-1)}$ and using summations properties we obtain that
\begin{eqnarray*}
    \sum_{l=0}^{n-1}\E^\frac{n-l}{n}\otimes\E^\frac{i+l}{n}\zeta_n^{l(j-1)}&=&\sum_{l=0}^{n-(i+1)}\E^\frac{n-l}{n}\otimes\E^\frac{i+l}{n}\zeta_n^{l(j-1)}+\sum_{l=n-i}^{n-1}\E^\frac{n-l}{n}\otimes\E^\frac{i+l}{n}\zeta_n^{l(j-1)}\\
    &=&\sum_{l'=i}^{n-1}\E^\frac{n+i-l'}{n}\otimes\E^\frac{l'}{n}\zeta_n^{(l'-i)(j-1)}+\sum_{l'=0}^{i-1}\E^\frac{i-l'}{n}\otimes\E^\frac{n+l'}{n}\zeta_n^{(l'-i+n)(j-1)}.
\end{eqnarray*}
Nevertheless, $\E^\frac{i-l'}{n}\otimes\E^\frac{n+l'}{n}\zeta_n^{(l'-i+n)(j-1)}=\E^\frac{n+i-l'}{n}\otimes\E^\frac{l'}{n}\zeta_n^{(l'-i)(j-1)}=(\zeta_n^{-i(j-1)}\E^\frac{i}{n})\E^\frac{n-l'}{n}\otimes\E^\frac{l'}{n}\zeta_n^{(l')(j-1)},$ since $\E^\frac{n}{n}=\E,\, \zeta_n^{-i(j-1)}\in k$ and $\zeta_n^n=1$. Therefore 
\begin{equation}\label{eq2}
\begin{split}
 \sum_{l=0}^{n-1}\E^\frac{n-l}{n}\otimes\E^\frac{i+l}{n}\zeta_n^{l(j-1)}&=(\zeta_n^{-i(j-1)}\E^\frac{i}{n})\sum_{l'=i}^{n-1}\E^\frac{n-l'}{n}\otimes\E^\frac{l'}{n}\zeta_n^{l'(j-1)}+\sum_{l'=0}^{i-1}\E^\frac{n-l'}{n}\otimes\E^\frac{l'}{n}\zeta_n^{(l')(j-1)}\\
&=(\zeta_n^{-i(j-1)}\E^\frac{i}{n})\sum_{l'=0}^{n-1}\E^\frac{n-l'}{n}\otimes\E^\frac{l'}{n}\zeta_n^{(l'-i)(j-1)}.
\end{split}
\end{equation}

We can substitute Equation (\ref{eq2}) into Equation (\ref{eq1}) and utilize the properties $\zeta_n^n=1$ and the definition of $\Si_n$ to obtain that   
\begin{eqnarray*}
\sum_{i=0}^{n-1}a_i\frac{1}{n\E}\left(\sum_{l=0}^{n-1}\E^\frac{n-l}{n}\otimes\E^\frac{i+l}{n}\zeta_n^{l(j-1)}\right)&=&\sum_{i=0}^{n-1}a_i\zeta_n^{(-i)(j-1)}\E^\frac{i}{n}\left(\frac{1}{n\E}\sum_{l'=0}^{n-1}\E^\frac{n-l'}{n}\otimes\E^\frac{l'}{n}\zeta_n^{l'(j-1)}\right)\\
&=&\sum_{i=0}^{n-1}a_i\zeta_n^{(i)(1-j)}\E^\frac{i}{n}\left(\B_j\right)\\
&=&\sum_{i=0}^{n-1}a_i\zeta_n^{(i)(n+1-j)}\E^\frac{i}{n}\left(\B_j\right)\\
&=&\sum_{i=0}^{n-1}\Si_n^{n+1-j}(a_i\E^\frac{i}{n})\left(\B_j\right)\\
&=&\Si_n^{n+1-j}(a)\left(\B_j\right).
\end{eqnarray*}
Hence, $(\id\otimes f_a)(\B_j)=\Si_n^{n+1-j}(a)\left(\B_j\right)$. A similar argument, utilizing the fact that $g_a(x)=\Si_n(x)a$, can be employed to compute $\id\otimes g_a$. This concludes the proof of the lemma.
\end{proof}

Let $a,\,b\in k_n$ be elements such that $(f_a,\,g_b):k_n\oplus\,^{\Si_n}k_n\lra k_n$ is a regular simple representation of $\Lambda_n$, and $d$ a multiple of $n$. Now, by application of $\id\otimes-$ to $(f_a,\,g_b)$, the induced morphism
$$
(\id\otimes f_a,\,\id\otimes g_b):(k_d\otimes k_n)\oplus (k_d\otimes\,^{\Si_n} k_n)\lra (k_d\otimes k_n),
$$
is, by Lemma \ref{lemma2.1}, such that
$$
\B_j\oplus 0\mapsto \Si_n^{n+1-j}(a)(\B_j)\quad\text{ and }\quad 0\oplus\B_j\mapsto \Si_n^{n-j}(a)\B_{j+1}.
$$
Since, by Proposition \ref{obs1}, $k_d\otimes k_n\cong \underbrace{k_d\times\cdots\times k_d}_{n\text{-times}}$, we have that $(\id\otimes f_a,\,\id\otimes g_b)$ can be identified with the following representation:
\begin{center}
\begin{tikzpicture}
[->,>=stealth',shorten >=1pt,auto,node distance=3cm,thick,main node/.style=]
  \node[main node] (1) {$k_d$};   
  \node[main node] (6) [below of =1]{$k_d$};   
  \node[main node] (2) [right of=1] {$k_d$};
  \node[main node] (7) [below of =2]{$k_d$};   
  \node[main node] (3) [right of=2] {$\cdots$};
    \node[main node] (8) [below of =3]{$\cdots$};
  \node[main node] (4) [right of=3] {$k_d$};
  \node[main node] (5) [below of=4] {$k_d$};
\path[every node/.style={font=\sffamily\small}]     
(1) edge node   {$a$} (6)         
     edge node         {$\Si_d^{n-1}(b)$} (7)           
(2) edge node  {$\Si_d^{n-1}(a)$} (7)              
(4) edge node  {$\Si_d(a)$} (5)              
   edge node         {$b$} (6)
   (3) edge node  {$\Si_d(b)$} (5) ;
\end{tikzpicture}
\end{center}
Similarly, let $\varphi_M\colon k_n^m\oplus \,^{\Si_n}k_n^{m}\lra k_n^{m}$ be a regular simple representation of $\Lambda_n$. Let $[AB]$ be the matrix associated with $\varphi_M$, where $A,\,B\in\M_{m\times m}(k_n)$, and $d:=\mcm(n,m)$. We have that $k_d\otimes_k \varphi_M$ can be identified with the following representation:

\begin{center}
\begin{tikzpicture}
[->,>=stealth',shorten >=1pt,auto,node distance=3.3cm,thick,main node/.style=]
  \node[main node] (1) {$k_d^m$};   
  \node[main node] (6) [below of =1]{$k_d^m$};   
  \node[main node] (2) [right of=1] {$k_d^m$};
  \node[main node] (7) [below of =2]{$k_d^m$};   
  \node[main node] (3) [right of=2] {$\cdots$};
    \node[main node] (8) [below of =3]{$\cdots$};
  \node[main node] (4) [right of=3] {$k_d^m$};
  \node[main node] (5) [below of=4] {$k_d^m$};
\path[every node/.style={font=\sffamily\small}]     
(1) edge node   {$A$} (6)         
     edge node         {$\Si_d^{n-1}(B)$} (7)           
(2) edge node  {$\Si_d^{n-1}(A)$} (7)              
(4) edge node  {$\Si_d(A)$} (5)              
   edge node         {$B$} (6)
   (3) edge node  {$\Si_d(B)$} (5) ;
\end{tikzpicture}
\end{center}
Here, $\Si_d$ is the automorphism defined by $\E^\frac{1}{d}\mapsto \zeta_d\E^\frac{1}{d}$, and $\Si_d^j(A)$ (resp. $\Si_d^j(B)$) denotes the matrix obtained by application of the morphism $\Si_d^j$ to each entry of the matrix $A$ (resp. $B$). This is a representation of our $n$-crown quiver $\mathcal{Q}_n$, see Figure \ref{figurecrown}.

From the ideas of Hubery's work in \cite[Section 3]{Hu02}, with appropriate modifications to our context, we define a module $\,^{\G_{d,n}}\mathcal{M}$ using the underlying vector space structure of $\mathcal{M}$, where $\mathcal{M}$ is a $ k_d \mathcal{Q}_n$-module. To this end, we introduce a new product $\ast$ as follows:
$$
p\ast m=\G_{d,n}^{-1}(p)m\text{, with}\, p\in k_d \mathcal{Q}_n.
$$
For each $f:\mathcal{M}\lra\mathcal{N}$ a module homomorphism, we obtain a homomorphism $\,^{\G_{d,n}}f:\,^{\G_{d,n}}\mathcal{M}\lra\,^{\G_{d,n}}\mathcal{N}$ as follows. Since $f$ is, in particular, a homomorphism of vector spaces, we have $\,^{\G_{d,n}}f=f$. Therefore
$$
f(p{\ast}m)=f(\G_{d,n}^{-1}(p)m)=\G_{d,n}^{-1}(p)f(m)=p{\ast}f(m).
$$
This defines an autoequivalence $F(\G_{d,n})$ on the category of (right) finitely generated modules on $ k_d \mathcal{Q}_n$ and satisfies $F(\G_{d,n}^r)=F(\G_{d,n})^r$, for any $r\in \mathbb{Z}$. Since $F(\G_{d,n})$ is an additive functor, we conclude that $\mathcal{M}$ is indecomposable if and only if $\,^{\G_{d,n}}\mathcal{M}$ is indecomposable.

The categories mod$-k_d \mathcal{Q}_n$ of right finitely generated $k_d$-modules and $\Rep (\mathcal{Q}_n, k_d)$ of $k_d$-linear representations of $\mathcal{Q}_n$ are known to be equivalent (see \cite[Sec. III]{ASS06}). Consequently, the functor $F(\G_{d,n})$ must also act on $\Rep(\mathcal{Q}_n, k_d)$.

Let $M=(V_i,\,V'_i,\,f_i,\,g_i)$ be a $ k_d$-representation of $\mathcal{Q}_n$, where $f_i:V_i\lra V'_i$ and $g_i:V_{i-1}\lra V'_i$. Let $\calm$ be the corresponding $k_d\mathcal{Q}_n$-module, so  $\calm$ has underlying $k_d$-vector space $V=\bigoplus_{j=1}^n(V_i\oplus V_i')$. Let $^{\G_{d,n}}\calm$ be the corresponding module and  $^{\G_{d,n}} M=(W_i,\,W'_i,\,\hat{f}_i,\,\hat{g}_i)$ its representation. We wish to describe $^{\G_{d,n}} M$ in terms of the original $M$. To this end, note that
$$
W_i=\epsilon_i\ast V=\G_{d,n}^{-1}(\epsilon_i)V=\epsilon_{\G^{-1}_n(i)}V=V_{\G^{-1}_n(i)},
$$
similarly $W'_i=V'_{\G^{-1}_n(i)}$. On one hand, the product on $W_i$ is defined by $a\ast w_j=\G_{d,n}^{-1}(a)w_i=\Si_{d}^{-1}(a)w_i$, for all $a\in k_d$ and $w_i\in W_i$. On the other hand, for each $f_i: V_i\lra V'_i$ (resp. $g_i:V_{i-1}\lra V_i'$) we can associate a matrix $A_i\in \M_{\dim V_i'\times \dim V_i}(k_d)$ (resp. $B_i\in \M_{\dim V_i'\times \dim V_{i-1}}(k_d)$). Thus, $\hat{f}_i:W_i\lra W'_i$ is defining by
$$
\hat{f}_i(w)=\A_i\ast w=\G_{d,n}^{-1}(\A_i)w=\A_{\G_n^{-1}(i)}w=A_{\G_n^{-1}(i)}w=\Si_d^{-1}(\Si_d(A_{\G_n^{-1}(i)}))w=\Si_d(A_{\G_n^{-1}(i)})\ast w, 
$$
where $\Si_d(A_i)$ means that $\Si_d$ is applied to each entry of the matrix $A_i$ and $\A_i$ is the arrow between $i$ and $i'$. Similarly, $\hat{g}_i(w)=\Si_d(B_{\G_n^{-1}(i)})\ast w$. Hence, if $M$ is a $ k_d$-representation of $\mathcal{Q}_n$ as below 
\begin{center}
\begin{tikzpicture}
[->,>=stealth',shorten >=1pt,auto,node distance=2.5cm,thick,main node/.style=]
  \node[main node] (1) {$V_1$};   
  \node[main node] (6) [below of =1]{$V'_1$};   
  \node[main node] (2) [right of=1] {$V_2$};
  \node[main node] (7) [below of =2]{$V'_2$};   
  \node[main node] (3) [right of=2] {$\cdots$};
    \node[main node] (8) [below of =3]{$\cdots$};
  \node[main node] (4) [right of=3] {$V_n$};
  \node[main node] (5) [below of=4] {$V'_n$};
\path[every node/.style={font=\sffamily\small}]     
(1) edge node   {$A_1$} (6)         
     edge node         {$B_2$} (7)           
(2) edge node  {$A_2$} (7)              
(4) edge node  {$A_n$} (5)              
   edge node         {$B_1$} (6)
(3) edge node  {$B_n$} (5) ;
\end{tikzpicture}
\end{center}
then the corresponding representation $\,^{\G_{d,n}} M$ is:
\begin{center}
\begin{tikzpicture}
[->,>=stealth',shorten >=1pt,auto,node distance=2.6cm,thick,main node/.style=]
  \node[main node] (1) {$V_2$};   
  \node[main node] (6) [below of =1]{$V'_2$};   
  \node[main node] (2) [right of=1] {$V_3$};
  \node[main node] (7) [below of =2]{$V'_3$};   
  \node[main node] (3) [right of=2] {$\cdots$};
    \node[main node] (8) [below of =3]{$\cdots$};
  \node[main node] (4) [right of=3] {$V_1$};
  \node[main node] (5) [below of=4] {$V'_1$};
\path[every node/.style={font=\sffamily\small}]     
(1) edge node   {$\Si_d(A_2)$} (6)         
     edge node         {$\Si_d(B_3)$} (7)           
(2) edge node  {$\Si_d(A_3)$} (7)              
(4) edge node  {$\Si_d(A_1)$} (5)              
   edge node         {$\Si_d(B_2)$} (6)
(3) edge node  {$\Si_d(B_1)$} (5) ;
\end{tikzpicture}
\end{center}
Similar to the approach in \cite{Hu02}, we will refer to $M$ as an \textit{isomorphically invariant representation}, denoted by $ii$-representation, if $\,^{\G_{d,n}}M\cong M$. We will say that $M$ is \textit{isomorphically invariant indecomposable} if it is not isomorphic to the proper direct sum  of two such isomorphically invariant representations.The general form of these isomorphically invariant indecomposables is demonstrated by the following lemma:

\begin{lema}\label{lemmaii} 
Let $M$ be a representation of $\mathcal{Q}_n$. The representation $M$ is an isomorphically invariant indecomposable if and only if there exists $N$ be an indecomposable representation such that
$$
M\cong N\oplus\,^{\G_{d,n}}N\oplus\,^{\G_{d,n}^2}N\oplus\cdots\oplus\,^{\G_{d,n}^{m-1}}N,
$$
and $m\geq 1$ is the smallest integer such that $\,^{\G_{d,n}^{m}}N\cong N$. 
\end{lema}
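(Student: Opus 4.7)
The plan is to combine Krull--Schmidt in $\Rep(\mathcal{Q}_n, k_d)$ with the fact, recalled just above the statement, that $F(\G_{d,n})$ is an autoequivalence of this category; in particular it permutes the isomorphism classes of indecomposable representations, and $F(\G_{d,n}^r) = F(\G_{d,n})^r$ for every integer $r$.

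For the ``if'' direction, I would fix an indecomposable $N$ and the smallest $m \geq 1$ with $\,^{\G_{d,n}^m}N \cong N$, and set $M := \bigoplus_{i=0}^{m-1} \,^{\G_{d,n}^i}N$. Applying $F(\G_{d,n})$ to $M$ and cyclically reindexing the summands using $\,^{\G_{d,n}^m}N \cong N$ yields $\,^{\G_{d,n}}M \cong M$, so $M$ is an $ii$-representation. To verify $ii$-indecomposability, suppose $M \cong M_1 \oplus M_2$ with both $M_j$ being $ii$. By Krull--Schmidt each $M_j$ is a direct sum of copies of the $\,^{\G_{d,n}^i}N$; the condition $\,^{\G_{d,n}}M_j \cong M_j$ then forces the multiset of summand classes of $M_j$ to be stable under $\G_{d,n}$. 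Minimality of $m$ makes $N, \,^{\G_{d,n}}N, \ldots, \,^{\G_{d,n}^{m-1}}N$ pairwise non-isomorphic and a single $\langle\G_{d,n}\rangle$-orbit, so the only stable subsets are $\emptyset$ and the whole orbit, forcing $M_1 = 0$ or $M_2 = 0$.

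For the ``only if'' direction, I would start from a Krull--Schmidt decomposition $M \cong \bigoplus_j N_j^{a_j}$ into pairwise non-isomorphic indecomposables. The hypothesis $\,^{\G_{d,n}}M \cong M$, together with Krull--Schmidt, implies that the permutation of isomorphism classes induced by $\G_{d,n}$ preserves the multiplicities $a_j$; equivalently, the $a_j$ are constant along each $\langle\G_{d,n}\rangle$-orbit. Grouping summands by orbits, $M$ decomposes as a direct sum of blocks of the form $a \cdot \bigoplus_{i=0}^{m-1} \,^{\G_{d,n}^i}N$, where $m$ is the orbit length of $N$ and $a$ is the common multiplicity on that orbit. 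Each such block is itself $ii$ by the first direction, so $ii$-indecomposability forces a single orbit with $a = 1$, which is exactly the claim.

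The main obstacle will be the ``only if'' direction, specifically ruling out multiplicity $a \geq 2$ on the surviving orbit: one must observe that if a single orbit appears with multiplicity $a \geq 2$, then $M$ splits as the direct sum of $a$ copies of the corresponding orbit block, each of which is independently an $ii$-representation; together with the orbit-grouping step this forces a unique orbit appearing with multiplicity one.
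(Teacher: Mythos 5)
Your proposal is correct and follows essentially the same route as the paper: Krull--Remak--Schmidt plus the observation that the additive autoequivalence $F(\G_{d,n})$ permutes the indecomposable summands, so that an $ii$-indecomposable must consist of a single $\langle\G_{d,n}\rangle$-orbit. In fact your treatment is slightly more careful than the paper's, since you explicitly rule out multiplicities $a\geq 2$ on the surviving orbit and spell out why a proper nonempty sub-multiset of a single orbit cannot be $\G_{d,n}$-stable, both of which the paper's proof passes over implicitly.
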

\begin{proof}
Suppose that $M$ is an $ii$-representation of $\mathcal{Q}_n$. By Krull-Remak-Schmidt theorem for $\mathcal{Q}_n$-representations, we can write $M$ as a direct sum of indecomposable representations as follows
$$
M\cong N_1\oplus N_2\oplus \cdots\oplus N_{m-1}.
$$
Given that $^{\G_{d,n}}M\cong M$ and $F({\G_{d,n}})$ is an additive functor, it must act (up to isomorphism) as a permutation of these indecomposable summands. Without loss of generality, we can assume that $N_i=\,^{\G^{i-1}_{d,n}}N_1$. If $N_j\cong N_1$, for some $1\leq j\leq m-1$, with $j$ minimum with this property. Then $\bigoplus_{i=1}^{j-1}N_i=\bigoplus_{i=1}^{j-1}\,^{\G^{i-1}_{d,n}}N_1$ is an $ii$-representation, since $F({\G_{d,n}})$ is additive.
    
Since $M$ and $\bigoplus_{i=1}^{j-1}N_i$ are $ii$-representations, it follows that $\bigoplus_{i=j}^{m}N_i$ is also an $ii$-representation, 
which contradicts the fact that $M$ is $ii$-indecomposable. Therefore $N_i\cong N_j$ if and only if $i=j$. Besides, $^{\G_{d,n}}N_{i}\cong N_{i+1}$ for $1\leq i\leq m-2$ and $^{\G_{d,n}}N_{m-1}\cong N_1$, \textit{i.e.}, $m$ is the smallest integer such that $\,^{\G_{d,n}^{m}}N_1\cong N_1$.

Conversely, let $N$ be an indecomposable representation, where $m\geq 1$ is the smallest integer such that $\,^{\G_{d,n}^{m}}N\cong N$. Since $F({\G_{d,n}})$ is an additive functor, we obtain that $\bigoplus_{i=1}^{m-1}\,^{\G^{i-1}_{d,n}}N$ is an $ii$-representation. The claim that $\bigoplus_{i=1}^{m-1}\,^{\G^{i-1}_{d,n}}N$ is $ii$-indecomposable follows from the fact that $m\geq 1$ is the smallest integer such that $\,^{\G_{d,n}^{m}}N\cong N$.
\end{proof}

\subsection{Skew group algebras} \label{GD}
Consider a field $\F$, an $\F$-algebra $\Lambda$ and a finite group $G$ acting on $\Lambda$. The \emph{skew group algebra} $\Lambda G$ can be defined (see \cite{RR85}). It has an $\F$-basis consisting of elements of the form $\lambda g$, where $\lambda\in \Lambda$ and $g\in G$. The product in $\Lambda G$  is defined as follows:
$$
\lambda g\cdot\mu g':=\lambda g(\mu)g\circ g'.
$$
An illustrative example of a skew group algebra is $\F_nG$, where $\F_n$ is a Galois extension of $ \F$ and $G=\langle\Si_n\rangle$ is its Galois group. A significant characterization of this algebra is provided by the following proposition. 
\begin{prop}\label{prop2} 
If $\F_n\mid \F$ is a Galois extension with Galois group $G=\langle\Si_n\rangle$, then the skew group algebra $\F_nG$ is isomorphic to $\M_{n\times n}(\F)$.
\end{prop}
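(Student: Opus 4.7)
The plan is to exhibit a concrete $\F$-algebra homomorphism from $\F_n G$ into $\End_\F(\F_n) \cong \M_{n\times n}(\F)$ arising from the natural action of $\F_n G$ on the vector space $\F_n$, show that this homomorphism is injective, and then conclude by a dimension count.

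First I would note that $\F_n$ is a left $\F_nG$-module in a natural way: for $\lambda \in \F_n$ and $g \in G$, set
\[
(\lambda g) \cdot x \coloneqq \lambda\, g(x), \qquad x \in \F_n,
\]
and extend $\F$-linearly. A direct check using the skew-product $\lambda g \cdot \mu g' = \lambda g(\mu)\, g \circ g'$ shows this is a genuine left action, so we obtain an $\F$-algebra homomorphism
\[
\varphi : \F_n G \longrightarrow \End_\F(\F_n), \qquad \varphi(\lambda g)(x) = \lambda\, g(x).
\]

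Next I would establish injectivity. Any element of $\F_n G$ has the form $\xi = \sum_{j=0}^{n-1} \lambda_j \Si_n^j$ with $\lambda_j \in \F_n$. If $\varphi(\xi) = 0$, then $\sum_{j=0}^{n-1} \lambda_j \Si_n^j(x) = 0$ for every $x \in \F_n$. Dedekind's lemma on the linear independence of distinct characters (applied to the pairwise distinct $\F$-automorphisms $\id, \Si_n, \Si_n^2, \ldots, \Si_n^{n-1}$ of $\F_n$, viewed as characters $\F_n^\times \to \F_n^\times$) forces $\lambda_j = 0$ for every $j$, hence $\xi = 0$. So $\varphi$ is injective.

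Finally I would compare dimensions over $\F$. On one hand, $\F_n G$ has $\F$-basis $\{\lambda_i \Si_n^j : 1 \le i \le n,\ 0 \le j \le n-1\}$ where $\{\lambda_1,\ldots,\lambda_n\}$ is any $\F$-basis of $\F_n$, so $\dim_\F \F_n G = n^2$. On the other hand, $\dim_\F \End_\F(\F_n) = n^2$ since $[\F_n : \F] = n$. An injective $\F$-linear map between $\F$-vector spaces of equal finite dimension is an isomorphism, so $\varphi$ is an isomorphism of $\F$-algebras, and fixing any $\F$-basis of $\F_n$ yields $\End_\F(\F_n) \cong \M_{n\times n}(\F)$.

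The main obstacle is the injectivity step: without Dedekind's independence of characters one cannot rule out a non-trivial relation among the automorphisms $\Si_n^j$, and the whole argument collapses. The dimension count and the verification that $\varphi$ is a homomorphism are essentially formal once the action is written down.
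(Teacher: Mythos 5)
Your argument is correct. The paper itself does not prove this proposition; it simply cites the appendix of Auslander--Goldman and Milne's book, so there is no in-text argument to compare against. What you have written is the standard self-contained proof that those references contain: represent $\F_nG$ on the $n$-dimensional $\F$-vector space $\F_n$ via $(\lambda g)\cdot x=\lambda\,g(x)$, check compatibility with the skew product $\lambda g\cdot\mu g'=\lambda g(\mu)\,gg'$, deduce injectivity from Dedekind's linear independence of the distinct automorphisms $\id,\Si_n,\dots,\Si_n^{n-1}$, and conclude by the dimension count $\dim_\F \F_nG=n^2=\dim_\F\End_\F(\F_n)$. Each step is sound, and you correctly identify Dedekind's lemma as the one non-formal ingredient. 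A minor remark: the argument as you give it never uses that $G$ is cyclic, only that $\F_n\mid\F$ is Galois with group $G$ of order $n=[\F_n:\F]$, so your proof actually establishes the proposition in that greater generality; this is consistent with the classical statement that the crossed product of a Galois extension with its Galois group is a full matrix algebra over the base field.
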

\begin{proof}
See \cite[Appendix]{AG60} or \cite[Appendix A64]{Mi17}.
\end{proof}
In a broader context, when a finite cyclic group $G$ acts on $\Lambda$ the category of $\Lambda G$-modules can be viewed as a specific type of $\Lambda-$modules, as demonstrated by the following proposition.

\begin{prop}\label{obs3}
Let $G=\langle\Si\rangle$ be a finite cyclic group acting on an $\F$-algebra $\Lambda$, where $\F$ is a field. If $\Lambda G$ denotes the corresponding skew group algebra, then the category of $\Lambda G$-modules is equivalent to the category of pairs $(M, f)$, where $M$ is a $\Lambda$-module and $f:\,^{\Si}M\lra M$ is an isomorphism satisfying the following condition:
$$
f\circ\,^{\Si}f\circ\cdots\circ\,^{\Si^{n-1}}f=\id_M.
$$
\end{prop}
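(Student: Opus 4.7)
The plan is to exhibit explicit mutually quasi-inverse functors between the category of (left) $\Lambda G$-modules and the category $\mathcal{C}$ of pairs $(M,f)$ described in the statement. The underlying principle is that, because $G=\langle\Si\rangle$ is cyclic and the skew relation $\Si\lambda=\Si(\lambda)\Si$ holds in $\Lambda G$, giving a $\Lambda G$-module structure on a $\Lambda$-module $M$ amounts to prescribing a single endomorphism ``multiplication by $\Si$'', subject to two conditions: compatibility with the $\Lambda$-action (the skew commutation) and having order $n$ (the group relation $\Si^n=e$). Each of these conditions is exactly one of the requirements defining an object of $\mathcal{C}$.

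First, given a $\Lambda G$-module $N$, I would set $F(N)=(M,f)$, where $M:=N$ is viewed as a $\Lambda$-module via the inclusion $\Lambda\hookrightarrow \Lambda G$, and $f:{}^{\Si}M\to M$ is the set-theoretic map induced by the action of $\Si$ on $N$ (with the appropriate sign dictated by the paper's convention for ${}^{\Si}M$). The skew commutation rule in $\Lambda G$ translates directly into $\Lambda$-linearity of $f$ as a map ${}^{\Si}M\to M$; invertibility of $f$ comes from invertibility of $\Si$ in $G$; and the identity $f\circ{}^{\Si}f\circ\cdots\circ{}^{\Si^{n-1}}f=\id_M$ expresses, at the level of underlying sets, the group relation $\Si^n=e$.

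In the other direction, for $(M,f)\in\mathcal{C}$ I would define $H(M,f)$ to be $M$ as a $\Lambda$-module equipped with the action $\Si\cdot m:=f(m)$, extended $\F$-linearly to the whole skew group algebra. The two required compatibility checks are exactly symmetric to the ones above: $\Lambda$-linearity of $f$ forces $\Si\cdot(\lambda m)=\Si(\lambda)(\Si\cdot m)$, and the cocycle condition on $(M,f)$ forces the action of $\Si$ to have order $n$, so the action extends well-definedly to $\Lambda G$. For morphisms, a $\Lambda G$-homomorphism $\varphi:N\to N'$ is a $\Lambda$-linear map intertwining the $\Si$-actions, which is precisely the condition $\varphi\circ f=f'\circ\varphi$ defining a morphism in $\mathcal{C}$.

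One then verifies directly that $HF=\id$ and $FH=\id$, on objects as well as on morphisms, so the two functors are mutually quasi-inverse. The only delicate point is keeping the convention for the twisted module ${}^{\Si}M$ consistent with the multiplication rule used for $\Lambda G$; once the convention is fixed, every verification reduces to an unwinding of the product $\lambda g\cdot\mu g'=\lambda\,g(\mu)\,gg'$ of the skew group algebra, and nothing genuinely nontrivial intervenes.
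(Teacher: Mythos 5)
Your proposal is correct and follows essentially the same route as the paper: both directions of the correspondence are obtained by reading the action of $\Si$ as the isomorphism $f:\,^{\Si}M\to M$ and, conversely, by defining $\Si\cdot m:=f(m)$ and extending to $\Lambda G$, with the cocycle condition encoding $\Si^n=e$. The only difference is that you also spell out the correspondence on morphisms and the quasi-inverse verification, which the paper's sketch leaves to the reference \cite{RR85}.
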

\begin{proof}
A sketch of the proof will be provided here. For further details, refer to \cite{RR85}. Consider an $\F$-algebra $\Lambda$ and a group $G=\langle\Si\rangle$ of order $n$. Recall that the skew group algebra $\Lambda G$ possesses an $\F$-basis consisting of elements of the form $\lambda\Si^r$, and the product is defined as follows: $\lambda\Si^r\mu\Si^s:=\lambda\Si^r(\mu)\Si^{r+s}$.

If $M$ is a $\Lambda G$-module, it can also be considered as a $\Lambda$-module under the product $\lambda m=\lambda \Si^0 m$. Additionally, the action of $\Si$ on $M$ is given by $\Si(m+n)=\Si m+\Si n$, and $\Si(\lambda m)=(\Si\lambda)m=(\Si(\lambda)\Si)m= \Si(\lambda)(\Si m).$ Now, define an isomorphism $f:\,^{\Si}M\lra M$ by mapping $m\mapsto \Si m$. This isomorphism satisfies the condition $f\circ\,^{\Si}f\circ\cdots\,\circ^{\Si^{n-1}}f=\id_M.$ Conversely, given a pair $(M,\,f)$, where $M$ is a $\Lambda$-module and $f:\,^{\Si}M\lra M$ is an isomorphism satisfying: $f\circ\,^{\Si}f\circ\cdots\,\circ^{\Si^{n-1}}f=\id_M$, we can define a $\Lambda G$-module structure on $M$ as follows:
$$
\lambda\Si^r m:=f\circ\,^{\Si}f\circ\cdots\,\circ^{\Si^{r-1}}f(\lambda m).
$$
This establishes the equivalence between the category of $\Lambda G-$modules and the category of pairs $(M,f)$.
\end{proof}

\begin{remark}\label{rem:n-crown}
{\em Consider the $n$-crown quiver, denoted by $\mathcal{Q}_n$, as described in Figure \ref{figurecrown}. As discussed in Subsection \ref{isokd} the group $\langle\Si_d\rangle$ acts on the path algebra $k_d\mathcal{Q}_n$ via the automorphism $\G_{d,n}$. This allows us to construct the skew group algebra $(k_d\mathcal{Q}_n)\langle\Si_d\rangle$, which has a basis consisting of elements of the form $\lambda\G_{d,n}^r$, where $\lambda$ represents a path in $k_d\mathcal{Q}_n$. The multiplication in this algebra is defined as follows:
$$\lambda\G_{d,n}^r\cdot \mu\G_{d,n}^s:=\lambda\G_{d,n}^r(\mu)\G_{d,n}^{r+s}.$$


Recalling that $k_d\mathcal{Q}_n\cong k_d\otimes \Lambda_n$ (Theorem \ref{teoiso}) and since $\langle\Si_d\rangle$ acts under this isomorphism, we have $(k_d\mathcal{Q}_n)\langle\Si_d\rangle\cong (k_d\otimes \Lambda_n)\langle\Si_d\rangle$. By Proposition \ref{prop2} it follows that 
$(k_d\mathcal{Q}_n)\langle \Si_d\rangle\cong\M_{d\times d}(\Lambda_n)$, as $k_n$-algebras. 

In particular, the categories of finitely generated modules $(k_d\mathcal{Q}_n)\langle \Si_d\rangle$-$\modu$ and $\Lambda_n$-$\modu$ are equivalents. 

On the other hand, consider $k_d\otimes X$ as a $k_d\otimes \Lambda_n$-module, for every $X\in\Lambda_n$-$\modu$. The action of $\langle\Si_d\rangle$ on $k_d\otimes X$ is given by $\,^{\Si_d}(k_d\otimes X):=\,^{\Si_d}k_d\otimes X$. Moreover, $\Si^{-1}_d\otimes\id_{X}:k_d\otimes X\lra \,^{\Si_d}(k_d\otimes X)$ is an isomorphism of $k_d\otimes \Lambda_n$-modules. This implies that $k_d\otimes X$ is an isomorphically invariant module. Furthermore, the isomorphism satisfies the following conditions:
$$
(\Si^{-1}_d\otimes\id_{X})\circ\,^{\Si_d}(\Si^{-1}_d\otimes\id_{X})\circ\cdots\circ\,^{\Si_d^{n-1}}(\Si^{-1}_d\otimes\id_{X})=\id_{k_d\otimes X}.
$$

If $X\in \Lambda_n$-mod is a regular simple module, then $\End_{\Lambda_n}(X)\cong k_m$ for some $m\in\Z_{>0}$. Consequently, if $d=\mcm(n, m)$ then, by Proposition \ref{obs1}, we have that
\begin{eqnarray}
\End_{k_d\otimes\Lambda_n}(k_d\otimes X)\cong k_d\otimes\End_{\Lambda_n}(X)\cong\underbrace{k_d\times\cdots\times k_d}_{m}.
\end{eqnarray}}
\end{remark}

\begin{lema}\label{obs6}
Consider a regular simple $\Lambda_n$-mod $X$ such that $\End_{\Lambda_n}(X)\cong k_m$ and $d=\mcm(n,m)$. Then,
$k_d\otimes X$ is isomorphic to a direct sum of regular simple $k_d\mathcal{Q}_n$-modules $N_1, N_2, \ldots, N_{m-1}$, where each $N_i$ satisfies $\Hom_{k_d\mathcal{Q}_n}(N_i,\,N_j)\cong\delta_{i,j}k_d$, with $\delta_{i,j}$ representing the Kronocker delta. Furthermore, for $1\leq i\leq m-2$, the action induces the isomorphisms $^{\G_{d,n}}N_i\cong N_{i+1}$, while $^{\G_{d,n}}N_{m-1}\cong N_1$.
\end{lema}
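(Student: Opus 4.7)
The plan is to exploit the endomorphism ring computation from Remark \ref{rem:n-crown} to obtain the decomposition, then verify regularity, and finally use the Galois descent equivalence to pin down the cyclic orbit structure.

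First I would compute $\End_{k_d\otimes_k\Lambda_n}(k_d\otimes X)$. By Remark \ref{rem:n-crown}, this ring is $k_d\otimes_k\End_{\Lambda_n}(X)\cong k_d\otimes_k k_m$. Since $m$ divides $d=\mcm(n,m)$, Proposition \ref{obs1} identifies $k_d\otimes_k k_m$ with a product $k_d\times\cdots\times k_d$ of $m$ copies of $k_d$. Let $e_1,\dots,e_m$ be the corresponding orthogonal primitive idempotents and set $N_i\coloneqq e_i(k_d\otimes X)$. Then $k_d\otimes X\cong N_1\oplus\cdots\oplus N_m$, and the identifications
\[
\Hom_{k_d\mathcal{Q}_n}(N_i,N_j)\cong e_j\End(k_d\otimes X)e_i
\]
together with the orthogonality $e_je_i=\D_{i,j}e_i$ inside $k_d^m$ immediately give $\Hom(N_i,N_j)\cong \D_{i,j}k_d$. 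In particular, each $\End(N_i)\cong k_d$ is a field, so each $N_i$ is indecomposable.

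Next I would verify that each $N_i$ is regular. The representation $k_d\otimes X$ was described in Subsection \ref{Invariantrep} as a $\mathcal{Q}_n$-representation with $k_d^m$ at every vertex, so its dimension vector is $m\D$ and hence $\defect(k_d\otimes X)=\langle\D,m\D\rangle=0$. Combined with the fact that $\tau$ commutes (up to canonical isomorphism) with the base change $k_d\otimes_k(-)$, regularity of $X$ lifts to $k_d\otimes X$, and therefore to each indecomposable summand $N_i$. Since $\End(N_i)$ is a division algebra, each $N_i$ is regular simple as a $k_d\mathcal{Q}_n$-module.

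Finally I would establish the cyclic orbit under $\G_{d,n}$. By the isomorphism $\Si_d^{-1}\otimes\id_X\colon k_d\otimes X\xrightarrow{\sim}\,^{\G_{d,n}}(k_d\otimes X)$ noted in Remark \ref{rem:n-crown}, the module $k_d\otimes X$ is isomorphically invariant, so $F(\G_{d,n})$ permutes the isoclasses $\{N_1,\dots,N_m\}$. If this permutation decomposed into more than one orbit, each orbit would, by Lemma \ref{lemmaii}, yield a proper $ii$-indecomposable direct summand of $k_d\otimes X$; via the equivalence
\[
\Lambda_n\text{-}\modu\;\simeq\;(k_d\mathcal{Q}_n)\langle\Si_d\rangle\text{-}\modu
\]
provided by Proposition \ref{obs3} and Remark \ref{rem:n-crown}, such a proper $ii$-summand would descend to a proper direct summand of $X$, contradicting the indecomposability of $X$ as a $\Lambda_n$-module. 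Hence $F(\G_{d,n})$ acts transitively on $\{N_1,\dots,N_m\}$ and, after reindexing, $\,^{\G_{d,n}}N_i\cong N_{i+1}$ (cyclically).

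The main obstacle I anticipate is the last step: carefully justifying that a $\G_{d,n}$-invariant direct sum of some proper subset of the $N_i$'s really does correspond, under Galois descent, to an honest $\Lambda_n$-direct summand of $X$. Regularity of the individual $N_i$ is also delicate, since defect is additive but not obviously distributed as zero across summands from the total defect alone, which is why the $\tau$-compatibility of the base change is the cleanest route.
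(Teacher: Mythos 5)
Your proposal is correct and follows essentially the same route as the paper: compute $\End_{k_d\otimes\Lambda_n}(k_d\otimes X)\cong k_d\otimes_k k_m\cong k_d^m$ via Remark \ref{rem:n-crown} and Proposition \ref{obs1}, deduce the decomposition into pairwise non-isomorphic indecomposables with $\Hom(N_i,N_j)\cong\delta_{i,j}k_d$, use compatibility of $\tau$ with base change to get regularity, and use the isomorphic invariance of $k_d\otimes X$ to see that $F(\G_{d,n})$ permutes the summands. You are in fact more careful than the paper on the one delicate point: the paper simply asserts ``we can assume'' the permutation is a single $m$-cycle, whereas you justify transitivity by descending a would-be proper orbit sum to a proper summand of $X$ (equivalently, a $\Si_d$-invariant idempotent of $k_d\otimes k_m$ lies in $k_m$ and is therefore trivial), and your count of $m$ summands is the correct reading of the paper's $N_1,\dots,N_{m-1}$ indexing.
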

\begin{proof}
Let $X$ be a regular simple $\Lambda_n$-module, with $\End_{\Lambda_n}(X)\cong k_m$. Then 
$\End_{k_d\otimes\Lambda_n}(k_d\otimes X)\cong\underbrace{k_d\times\cdots\times k_d}_{m}.$ Additionally, $\tau_{k_d\mathcal{Q}_n}(k_d\otimes X)\cong k_d\otimes \tau_{\Lambda_n}(X)\cong k_d\otimes X$. Hence, $k_d\otimes X\cong N_1\oplus N_2\oplus\cdots\oplus N_{m-1},$ for regular simple $k_d\mathcal{Q}_n$-modules, where each $N_i$ is a regular simple $k_d\mathcal{Q}_n$-module, with $\Hom_{k_d\mathcal{Q}_n}(N_i,\,N_j)\cong\delta_{i,j}k_d$, for $\delta_{i,j}$ the Kronocker delta.
    
From the description in Remark \ref{rem:n-crown}, we establish that $k_d\otimes X\cong \,^{\Si_d}(k_d\otimes X)$. This allows us to deduce the isomorphism, $\bigoplus_{i=1}^{m-1}N_i\cong \,^{\G_{d,n}}(\bigoplus_{i=1}^{m-1}N_i)$. Since the functor $F({\G_{d,n}})$ is additive, we can assume that $^{\G_{d,n}}N_i\cong N_{i+1}$, for $1\leq i\leq m-2$, and $^{\G_{d,n}}N_{m-1}\cong N_1$, thus completing the proof.
\end{proof}

\begin{teo}\label{lemma3.7}
  Consider an integer $n>0$. There exists a bijective correspondence between the isomorphism classes of regular simple $\Lambda_n$-modules with endomorphism ring isomorphic to $k_m$, where $d=\mcm(n,m)$, and the orbits under $\langle\Si_d\rangle$ of regular simple $k_d\mathcal{Q}_n$-modules $N$ satisfying the following conditions:
\begin{enumerate}
    \item $\End_{k_d\mathcal{Q}_n}(N)\cong k_d$,
\item $\,^{\Si_d^m}N\cong N\mbox{ y }\,^{\Si_d^j}N\not\cong N$  with $j=1,2,\cdots,m-1$.
\end{enumerate}
\end{teo}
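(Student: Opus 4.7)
The plan is to construct inverse maps between the two sets using the descent framework of Subsections~\ref{Invariantrep} and~\ref{GD} together with the decomposition provided by Lemma~\ref{obs6}. Throughout, I use that the $\langle\Si_d\rangle$-action on $k_d\mathcal{Q}_n$-modules is implemented by the functor $F(\G_{d,n})$, so ``$^{\Si_d^j}N$'' should be read as $^{\G_{d,n}^j}N$.

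For the forward direction, start with a regular simple $\Lambda_n$-module $X$ with $\End_{\Lambda_n}(X)\cong k_m$ and set $d=\mcm(n,m)$. Lemma~\ref{obs6} produces a decomposition $k_d\otimes_k X\cong N_1\oplus\cdots\oplus N_m$ into pairwise non-isomorphic regular simple $k_d\mathcal{Q}_n$-modules with $\End(N_i)\cong k_d$, cyclically permuted by $F(\G_{d,n})$. Assign to $[X]$ the $\langle\Si_d\rangle$-orbit $\{[N_1],\ldots,[N_m]\}$. Condition~(1) is immediate, while Condition~(2) follows because the $N_i$ are pairwise non-isomorphic, so $\,^{\G_{d,n}^j}N_1\cong N_1$ exactly when $m\mid j$. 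Well-definedness on isomorphism classes is inherited from the uniqueness of the Krull--Remak--Schmidt decomposition.

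For the backward direction, given a regular simple $k_d\mathcal{Q}_n$-module $N$ satisfying (1) and (2), define $\widetilde{M}:=\bigoplus_{i=0}^{m-1}\,^{\G_{d,n}^i}N$; this is $ii$-indecomposable by Lemma~\ref{lemmaii}. I construct a ``cyclic shift'' isomorphism $f\colon\,^{\G_{d,n}}\widetilde{M}\to\widetilde{M}$ that equals the identity on each summand $\,^{\G_{d,n}^{i+1}}N\to\,^{\G_{d,n}^i}N$ for $0\leq i<m-1$ and identifies $\,^{\G_{d,n}^m}N$ with $N$ via an isomorphism $\phi$ provided by Condition~(2). Proposition~\ref{obs3} requires the cocycle condition $f\circ\,^{\Si_d}f\circ\cdots\circ\,^{\Si_d^{d-1}}f=\id_{\widetilde{M}}$; tracing through, this composite is, on each summand, a $\Si_d$-twisted power of $\phi^{d/m}\in\End(N)^\ast\cong k_d^\ast$, and it can be normalized to the identity by rescaling $\phi$ through a Hilbert~90 argument for the cyclic extension $k_d\mid k_m$. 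Then $(\widetilde{M},f)$ defines a $(k_d\mathcal{Q}_n)\langle\Si_d\rangle$-module, and the Morita equivalence $(k_d\mathcal{Q}_n)\langle\Si_d\rangle\cong\M_{d\times d}(\Lambda_n)$ of Remark~\ref{rem:n-crown} yields a $\Lambda_n$-module $X$. Set $[N]\mapsto[X]$.

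It remains to verify that $X$ is regular simple with $\End_{\Lambda_n}(X)\cong k_m$, and that the two maps are mutually inverse. Regularity transfers from $\widetilde{M}$ via the compatibility $\tau_{k_d\mathcal{Q}_n}(k_d\otimes X)\cong k_d\otimes\tau_{\Lambda_n}(X)$ of Auslander--Reiten translation with base change. For the endomorphism ring, the pair description of Proposition~\ref{obs3} combined with Morita gives $\End_{\Lambda_n}(X)\cong\{\,g\in\End_{k_d\mathcal{Q}_n}(\widetilde{M})\mid g\circ f=f\circ\,^{\G_{d,n}}g\,\}$; since the summands of $\widetilde{M}$ are pairwise non-isomorphic with endomorphism ring $k_d$, we have $\End_{k_d\mathcal{Q}_n}(\widetilde{M})\cong k_d^m$, and the equivariance forces tuples of the form $(a,\Si_d(a),\ldots,\Si_d^{m-1}(a))$ with $a\in k_d$ satisfying $\Si_d^m(a)=a$. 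The fixed field is $k_m$, so $\End(X)\cong k_m$; in particular $\End(X)$ is a field, hence $X$ is regular simple. The two compositions reduce to the identity by Krull--Remak--Schmidt uniqueness on one side and by Morita-descent reconstruction on the other. The main obstacle will be the Hilbert~90 normalization of $\phi$ together with the bookkeeping required to track how $\Si_d$ and $\G_n$ interact under the isomorphism of Theorem~\ref{teoiso} when identifying the equivariant endomorphism algebra.
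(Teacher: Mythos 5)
Your skeleton is the same as the paper's: the forward direction is exactly Lemma~\ref{obs6}, and the backward direction passes through the pair description of Proposition~\ref{obs3} and the equivalence $(k_d\mathcal{Q}_n)\langle\Si_d\rangle\text{-}\modu\simeq\Lambda_n\text{-}\modu$ of Remark~\ref{rem:n-crown}. The difference is that the paper's proof of this theorem is two sentences long and simply \emph{asserts} that $\bigoplus_{j}{}^{\Si_d^j}N$ carries a $(k_d\mathcal{Q}_n)\langle\Si_d\rangle$-module structure, deferring the actual verification of the cocycle condition to the explicit matrix constructions of Propositions~\ref{propma}--\ref{propmagreater} and Lemmas~\ref{lemmamj}--\ref{lemmanj}, which double as the normal forms the paper is after; you instead dispose of the cocycle condition once and for all by an abstract normalization of the gluing isomorphism $\phi\in\End(N)^{\ast}\cong k_d^{\ast}$. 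That argument is sound, but the input you need is not Hilbert~90 ($H^1$): rescaling $\phi$ by $c$ multiplies the $d$-fold composite by $\prod_{i}\Si_d^{mi}(c)=N_{k_d/k_m}(c)$, so what you are using is surjectivity of the norm $k_d^{\ast}\to k_m^{\ast}$, i.e.\ vanishing of $\hat{H}^0=H^2$ of the cyclic group $\Gal(k_d/k_m)$ acting on $k_d^{\ast}$; this holds because $k_m\cong\C\lp s\rp$ is $C_1$, so $\mathrm{Br}(k_m)=0$ --- the same quasi-finiteness hypothesis the paper leans on throughout. With that relabelling, your endomorphism computation (equivariant tuples $(a,\Si_d(a),\dots)$ with $\Si_d^m(a)=a$, fixed field $k_m$) and the mutual-inverse check via Krull--Remak--Schmidt are correct and in fact supply details the paper omits; you also silently correct the paper's indexing of the summands in Lemma~\ref{obs6} from $N_1,\dots,N_{m-1}$ to $N_1,\dots,N_m$, which is the right count given $\End_{k_d\otimes\Lambda_n}(k_d\otimes X)\cong k_d^{\,m}$.
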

\begin{proof}
Let $X\in \Lambda_n$-mod be regular simple. By Lemma \ref{obs6}, there exists a $k_d\mathcal{Q}_n$-module $N$ that satisfies both of the given conditions. Conversely, if $N$ is a module that satisfies both conditions, we can consider the module $\bigoplus_{j=0}^{m-1}\,^{\Si^j}N$ as a $(k_d\mathcal{Q}_n)\langle\Si_d\rangle$-module. Since $(k_d\mathcal{Q}_n)\langle\Si_d\rangle$-modules are equivalent to $\Lambda_n$-modules, there exists a regular simple $\Lambda_n-$module associated with $\bigoplus_{j=0}^{m-1}\,^{\Si_d^j}N$. 
\end{proof}

\subsection{Homogenous and non homogenous representations}\label{hnhrep}
As established in Theorem \ref{lemma3.7}, our primary focus is on identifying representatives of regular simple representations of the 
$n$-crown $\mathcal{Q}_n$. To achieve this, we begin by noting that $\G_{d,n}$ commutes with $\tau$, the Auslander-Reiten translate (see \cite[Lemma 4.1]{RR85}). Consequently, $\G_{d,n}$ acts on both the preprojective and preinjective components of the Auslander-Reiten quiver.

The indecomposable regular representations are precisely those with defect $0$, and $\tau$ operates with finite period on each tube of the Auslander-Reiten quiver. Each regular indecomposable representation exhibits regular serial behavior and is uniquely determined, up to isomorphism, by its regular top and regular length. Therefore, the actions of both $\G_{d,n}$ and $\tau$ are determined by their actions on the regular simples.

Now, following \ref{lemmabands}, we consider the regular simple representations $M_a$, for $n\in\Z_{>0}$ and $d$ be a multiple of $n$:
\begin{center}
\begin{tikzpicture}
[->,>=stealth',shorten >=1pt,auto,node distance=2cm,thick,main node/.style=]
  \node[main node] (1) {$k_d$};   
  \node[main node] (6) [below of =1]{$k_d$};   
  \node[main node] (2) [right of=1] {$k_d$};
  \node[main node] (7) [below of =2]{$k_d$};   
  \node[main node] (3) [right of=2] {$\cdots$};
    \node[main node] (8) [below of =3]{$\cdots$};
  \node[main node] (4) [right of=3] {$k_d$};
  \node[main node] (5) [below of=4] {$k_d$};
\path[every node/.style={font=\sffamily\small}]     
(1) edge node   {$a$} (6)         
     edge node         {$1$} (7)           
(2) edge node  {$1$} (7)              
(4) edge node  {$1$} (5)              
   edge node         {$1$} (6)
   (3) edge node  {$1$} (5) ;
\end{tikzpicture}
\end{center}

with $a\neq 0$. Considering $M_a$ and $M_b$ as representations as described above, they are isomorphic if and only $a=b$.  As we observed in Subsection \ref{Invariantrep}, $\G_{d,n}$ acts on $M_a$ as follows: 
\begin{center}
\begin{tikzpicture}
[->,>=stealth',shorten >=1pt,auto,node distance=2cm,thick,main node/.style=]
  \node[main node] (1) {$k_d$};   
  \node[main node] (6) [below of =1]{$k_d$};   
  \node[main node] (2) [right of=1] {$k_d$};
  \node[main node] (7) [below of =2]{$k_d$};   
  \node[main node] (3) [right of=2] {$\cdots$};
    \node[main node] (8) [below of =3]{$\cdots$};
  \node[main node] (4) [right of=3] {$k_d$};
  \node[main node] (5) [below of=4] {$k_d$};
\path[every node/.style={font=\sffamily\small}]     
(1) edge node   {$1$} (6)         
     edge node         {$1$} (7)           
(2) edge node  {$1$} (7)              
(4) edge node  {$\Si_d(a)$} (5)              
   edge node         {$1$} (6)
   (3) edge node  {$1$} (5) ;
\end{tikzpicture}
\end{center}

We denote this representation by $^{\G_{d,n}}M_a$. Since $a\neq 0$, it follows that $\Si_d(a)\neq 0$ and therefore the morphism $f_{(\Si_d(a))^{-1}}:k_d\lra k_d$ is an isomorphism. Consequently, we have that $^{\G_{d,n}}M_a\cong M_{\Si_d(a)}$, and further,
$^{\G_{d,n}^j}M_a\cong M_{\Si_d^j(a)}$, for all $1\leq j\leq n-1$. Thus, we are interested in elements $a\in k_n$ that are \textit{generic}, meaning that $\Si_d^j(a)\neq a$, for $1\leq j\leq n-1$. Thus, for $a\in k_m$,  a generic element, we have the representation:
$$
M:=\bigoplus_{j=0}^{m-1}\,^{\G_{d,n}^j}M_a
$$
which is an $ii-$indecomposable representation. These representations are of particular interest to us. As will be shown later, they provide the pairs described in \ref{obs3}. As demonstrated in Propositions \ref{propmaless} and \ref{propmagreater}, a crucial aspect in determining these pairs is the fact that the $\frac{n-m}{\mcd(n,m)}$-th power of these generic elements has a $\frac{d}{m}$-root in $k_d$. To establish this, we prove the following Lemma, which is slightly more general:

\begin{lema}\label{nth-root}
Consider the field of complex Laurent series $\C\lp x\rp$ and let $n$ be a positive integer. For any element $a\in\C\lp x\rp$, there exists an element $b\in\C\lp x\rp$ such that $a=b^nx^m$, for some $0\leq m\leq n-1$ and $\cogrado(a)\equiv m\,(\modu n)$.
\end{lema}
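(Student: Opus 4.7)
The plan is to reduce the problem to showing that every unit in the power series ring $\C[[x]]$ has an $n$-th root, and then to peel off the unique power of $x$ whose exponent is congruent to $\cogrado(a)$ modulo $n$.

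First, I would dispose of the trivial case $a=0$ by taking $b=0$ and $m=0$ (with the convention $\cogrado(0)=\infty$, the congruence is vacuous or can be read as any $m$; alternatively state the lemma for $a\neq0$). Assume now $a\neq 0$ and set $c\coloneqq\cogrado(a)$. By the Euclidean algorithm write $c=qn+m$ with a unique $0\leq m\leq n-1$; by construction $c\equiv m\pmod n$, which fixes the $m$ demanded by the statement. Factoring out the lowest power of $x$, we can write
\[
a \;=\; x^{c}\,u,\qquad u=\sum_{j\geq 0}u_j x^j\in\C[[x]],\quad u_0\neq 0,
\]
so that $u$ is a unit of $\C[[x]]$.

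The key step is to produce $v\in\C[[x]]$ with $v^n=u$. Because $\C$ is algebraically closed, there exists $v_0\in\C^\times$ with $v_0^{\,n}=u_0$. Consider the polynomial $P(T)=T^n-u\in\C[[x]][T]$. Then $P(v_0)\equiv 0\pmod{x}$ while $P'(v_0)=n v_0^{n-1}\in\C^\times$, because $\Char\C=0$ and $v_0\neq 0$. Hensel's lemma for the complete local ring $\C[[x]]$ with maximal ideal $(x)$ then lifts $v_0$ to a genuine root $v\in\C[[x]]$ of $P$, i.e.\ $v^n=u$. (Alternatively, one may produce $v=v_0\bigl(1+x w\bigr)^{1/n}$ by the formal binomial series $\sum_{k\geq 0}\binom{1/n}{k}(xw)^k$, which converges coefficientwise in $\C[[x]]$; both routes give the same result.)

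With $v$ in hand, set $b\coloneqq x^{q}v\in\C\lp x\rp$ (note $q$ may be negative, which is why we need to allow Laurent series rather than just power series). Then
\[
b^{n}x^{m} \;=\; x^{qn}v^{n}x^{m} \;=\; x^{qn+m}u \;=\; x^{c}u \;=\; a,
\]
as required, and by construction $0\leq m\leq n-1$ with $\cogrado(a)=c\equiv m\pmod n$. The only delicate point is the existence of $v$; everything else is bookkeeping with $\cogrado$ and the division algorithm.
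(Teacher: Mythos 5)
Your proposal is correct, and its overall skeleton is the same as the paper's: write $a=x^{c}u$ with $c=\cogrado(a)$ and $u$ a unit of $\C[[x]]$, take an $n$-th root of $u$, and absorb the leftover $x^{qn}$ into $b$. Where the two arguments genuinely diverge is in how the existence of the $n$-th root $v$ of the unit $u$ is established. The paper proceeds by brute force on coefficients: it formally differentiates the identity $b^n=\sum_j c_j x^j$ to get $(b^n)'b=nb^nb'$, reads off a recurrence expressing the $c_j$ in terms of the $b_i$, and then asserts that the $b_j$ can be solved for recursively when $a_0\neq0$ --- essentially a hand-rolled, and rather tersely justified, version of the formal implicit function theorem. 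You instead invoke Hensel's lemma for the complete local ring $\C[[x]]$ applied to $P(T)=T^n-u$, checking $P(v_0)\equiv0\pmod{x}$ and $P'(v_0)=nv_0^{n-1}\in\C^{\times}$ (using $\Char\C=0$ and $u_0\neq0$), with the formal binomial series as a second route. Your version is cleaner and arguably more rigorous at the one point where the paper is sketchy (``we can define $b_j$ recursively until we obtain $b^n=a$''), at the cost of citing a standard external result rather than staying elementary; you also handle $a=0$ and the sign of $q$ explicitly, which the paper leaves implicit. Either argument is acceptable; yours fills the gap the paper glosses over.
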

\begin{proof}
Consider the element $b\in\C\lp x\rp$, let say $b=\sum_{j=0}^{\infty}b_jx^j$. Suppose that $b^n=\sum_{j=0}^{\infty}c_jx^j$, for some $n>0$. We will demonstrate that this leads to a recurrence relation for the coefficients $c_j$ in terms of the coefficients $b_i$. To achieve this, a common practice is to view $b$ and $b^n$ as ``functions'' and then obtain the derivative of $b^n$ with respect to $x$. This will allow us to obtain the recurrence relations. More precisely, by definition of derivates, $(b^n)'=nb^{n-1}b'$, then $(b^n)'b=nb^{n}b'$. Since $(b^n)'=\sum_{j=1}^{\infty}jc_jx^{j-1}=\sum_{j=0}^{\infty}(j+1)c_{j+1}x^j$ and $b'=\sum_{j=0}^{\infty}(j+1)b_{j+1}x^j$, we obtain that
$$
\sum_{j=0}^{\infty}(j+1)c_{j+1}x^j\sum_{j=0}^{\infty}b_jx^j=(b^n)'b=nb^{n}b'=n\sum_{j=0}^{\infty}c_jx^j\sum_{j=0}^{\infty}(j+1)b_{j+1}x^j,
$$
equating coefficients, we obtain a recurrence relation for $c_j$. 
    
In a similar way, if $a=\sum_{j=0}^{\infty}a_jx^j$, with $a_0\neq0$, we can define $b_j$ recursively until we obtain $b^n=a$, for $b=\sum_{j=0}^{\infty}b_jx^j\in \C\lp x\rp$. Furthermore, if $a\in\C\lp x\rp$ is such that $\cogrado(a)$ is a multiple of $n$, then $a=cx^{\cogrado(a)}$, for some $c\in\C\lp x\rp$, and $a=cx^{\cogrado(a)}=b^nx^{\cogrado(a)}=(bx^{\cogrado(a)/n})^n.$

Hence, for $a\in\C\lp x\rp$, with $\cogrado(a)$ is a multiple of $n$, there exists $b\in\C\lp x\rp$ such that $a=b^n$. More generally, if $a\in\C\lp x\rp$ is such that $\cogrado(a)\equiv m\,(\modu n)$, with $1\leq m\leq n-1$, then there exists $c\in\C\lp x\rp$, with $\cogrado(c)$ a multiple of $n$ such that $a=cx^m=b^nx^m.$
\end{proof}

\begin{remark}
{\em An interesting and useful remark is the following
\begin{eqnarray}\label{Eq7}
    \Pi_{i=0}^{n-1}\Si_n^i(\E^{\frac{j}{n}})=\Pi_{i=0}^{n-1}\zeta_n^{ij}(\E^{\frac{j}{n}})=(\zeta_n^{\left(\sum_{i=0}^{n-1}i\right)j})\E^j=\E^j.
\end{eqnarray}}
\end{remark}

Now, our main objective is to prove that for $a\in k_m$, a generic element, there exists an isomorphism $f:\bigoplus_{j=0}^{m-1}\,^{\G_{d,n}^j}M_a\lra\,^{\G_{d,n}}\left(\bigoplus_{j=0}^{m-1}\,^{\G_{d,n}^j}M_a\right)$, such that the induced automorphism $^{\G_{d,n}^{l-1}}f\circ\cdots\circ^{\G_{d,n}}f\circ f$ of $M$ is the identity for some positive integer $l$. To this end, we will consider the following three cases: $m=n$, $m<n$ and $m>n$.
\begin{prop}\label{propma}
Let $M=\bigoplus_{j=0}^{n-1}\,^{\G_{d,n}^j}M_a$ be an $ii$-indecomposable representation. Then there exists an isomorphism $f:M\lra\,^{\G_{d,n}}M$ such that the automorphism $^{\G_{d,n}^{n-1}}f\circ\cdots\circ^{\G_{d,n}}f\circ f$ of $M$ is the identity.
\end{prop}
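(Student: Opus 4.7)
The strategy is to exhibit a concrete isomorphism $f$ as a cyclic shift and observe that its $n$-th iterate is the identity. Since the summands of $M$ run over $j=0,\dots,n-1$ we are in the case $m=n$, so $d=\mcm(n,n)=n$ and $\G_{d,n}^n=\id$ as an automorphism of $k_d\mathcal{Q}_n$; in particular $F(\G_{d,n})^n$ is the identity functor, and for each summand $M_j:={}^{\G_{d,n}^j}M_a$ one has the literal equality of modules ${}^{\G_{d,n}^k}M_{j-k}=M_j$ for every $k$.

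Write $U$ for the underlying $\F$-vector space of $M_a$, which is also the underlying space of every twist $M_j$, so that $M$ has underlying space $V=U^{\oplus n}$ with the $j$-th slot carrying the action $p\cdot u=\G_{d,n}^{-j}(p)\cdot_{M_a} u$. I would let $f\colon V\to V$ be the cyclic left shift $(u_0,u_1,\dots,u_{n-1})\mapsto(u_1,u_2,\dots,u_{n-1},u_0)$. Comparing $f(p\cdot_M m)$ with $p\ast_{{}^{\G_{d,n}}M}f(m)$ coordinate by coordinate reduces to the tautology $\G_{d,n}^{-(j+1)}(p)\cdot_{M_a}u_{j+1}=\G_{d,n}^{-j-1}(p)\cdot_{M_a}u_{j+1}$, so $f$ is a $k_d\mathcal{Q}_n$-module isomorphism $M\to{}^{\G_{d,n}}M$; summand-wise it identifies $M_j$ with the slot ${}^{\G_{d,n}}M_{j-1}=M_j$ of ${}^{\G_{d,n}}M$ via the identity element of $\End(M_j)\cong k_d$. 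Pairwise non-isomorphism of the $M_j$, coming from genericity of $a$ (i.e.\ $\Si_n^\ell(a)\ne a$ for $1\leq\ell\leq n-1$), together with $\End(M_j)\cong k_d$ show that any such isomorphism must have this block form up to scalars.

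I would then observe that for each $k$ the map ${}^{\G_{d,n}^k}f$ equals $f$ as an $\F$-linear map on $V$, since twisting only rewrites the module action on the common underlying space. Therefore the composition $h:={}^{\G_{d,n}^{n-1}}f\circ\cdots\circ{}^{\G_{d,n}}f\circ f$ equals $f^{\,n}$ as an $\F$-linear map, and $f^{\,n}$ is the $n$-th iterate of the cyclic shift of $U^{\oplus n}$, which is the identity of $V$. Since $h$ is also a module endomorphism of $M={}^{\G_{d,n}^n}M$, this forces $h=\id_M$, which is exactly what the proposition demands.

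The only step that is not purely formal is the module-map check for the cyclic shift $f$; this is where the equality $d=n$ (hence $\G_{d,n}^n=\id$) genuinely enters, making the shift close up cleanly after $n$ steps with no residual scalar. In the cases $m\ne n$ treated in Propositions \ref{propmaless} and \ref{propmagreater} the same shift strategy must be corrected by a $\frac{d}{m}$-th root of a suitable power of $a$, which is precisely the role Lemma \ref{nth-root} was set up to play.
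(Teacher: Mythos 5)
Your proof is correct and is essentially the paper's own argument: the paper takes $f$ to be the cyclic permutation matrix $E_n$ at every vertex (which is exactly your shift of the summands ${}^{\G_{d,n}^j}M_a$), observes that ${}^{\G_{d,n}}f=f$, and concludes from $E_n^n=\id_n$ that the $n$-fold composite is the identity. The only difference is presentational — you verify the intertwining condition abstractly at the level of the twisted direct summands, while the paper checks the commuting squares $\Si_n(A_{j-1})E_n=E_nA_j$ with explicit diagonal matrices.
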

\begin{proof}
Consider the $ii-$indecomposable representation $M$ as the direct sum $\bigoplus_{j=0}^{n-1}\,^{\G_{d,n}^j}M_a$, where $M_a$ is a regular simple representation defined earlier for $a\in k_n$, a generic element. We can assume that $M$ has the form
\begin{center}
\begin{tikzpicture}
[->,>=stealth',shorten >=1pt,auto,node distance=3cm,thick,main node/.style=]
  \node[main node] (1) {$k_d^n$};   
  \node[main node] (6) [below of =1]{$k_d^n$};   
  \node[main node] (2) [right of=1] {$k_d^n$};
  \node[main node] (7) [below of =2]{$k_d^n$};   
  \node[main node] (3) [right of=2] {$\cdots$};
    \node[main node] (8) [below of =3]{$\cdots$};
  \node[main node] (4) [right of=3] {$k_d^n$};
  \node[main node] (5) [below of=4] {$k_d^n$};
\path[every node/.style={font=\sffamily\small}]     
(1) edge node   {$A_1$} (6)         
     edge node         {$\id_n$} (7)           
(2) edge node  {$A_n$} (7)              
(4) edge node  {$A_2$} (5)              
   edge node         {$\id_n$} (6)
   (3) edge node  {$\id_n$} (5) ;
\end{tikzpicture}
\end{center}
with $A_j=\diag(1,\cdots,\Si_n^{j-1}(a)\,\cdots,1)$, that is, the diagonal matrix with 1 in all entries except the $\G_n^{j-1}(1)$-th entry. Thus, the induced representation $\,^{\G_{d,n}}M$ is given by:
\begin{center}
\begin{tikzpicture}
[->,>=stealth',shorten >=1pt,auto,node distance=3.3cm,thick,main node/.style=]
  \node[main node] (1) {$k_d^n$};   
  \node[main node] (6) [below of =1]{$k_d^n$};   
  \node[main node] (2) [right of=1] {$k_d^n$};
  \node[main node] (7) [below of =2]{$k_d^n$};   
  \node[main node] (3) [right of=2] {$\cdots$};
    \node[main node] (8) [below of =3]{$\cdots$};
  \node[main node] (4) [right of=3] {$k_d^n$};
  \node[main node] (5) [below of=4] {$k_d^n$};
\path[every node/.style={font=\sffamily\small}]     
(1) edge node   {$\Si_n(A_n)$} (6)         
     edge node         {$\id_n$} (7)           
(2) edge node  {$\Si_n(A_{n-1})$} (7)              
(4) edge node  {$\Si_n(A_1)$} (5)              
   edge node         {$\id_n$} (6)
   (3) edge node  {$\id_n$} (5) ;
\end{tikzpicture}
\end{center}

Now, we consider the linear morphism $k_d^n\longrightarrow k_d^n$ such that its associated matrix is given by
\begin{eqnarray}\label{eqen}
    (E_n)_{ij}=\left\{\begin{array}{ccc}\delta_{i,\,j-1}&\mbox{if}&1<j\leq n,\\\delta_{i,\,1}&\mbox{if}&j=n,\end{array}\right.
\end{eqnarray}
where $\delta_{i,j}$ is the Kronocker delta, i.e., 
$$
E_n=\left[\begin{array}{ccccc}0&0&\cdots&0&1\\ 1&0&\cdots&0&0\\0&1&\cdots&0&0\\\vdots&\vdots&\ddots&\vdots&\vdots\\0&0&\cdots&1&0\end{array}\right]
$$
Thus, we can define the morphism $f=((f_i,f_{i'}))_{1\leq i\leq n}$ from $M$ to $\,^{\G_{d,n}}M$, where
$f_i=f_{i'}=E_n$ as above. It is easy to see that
\begin{center}
\begin{tikzpicture}
[->,>=stealth',shorten >=1pt,auto,node distance=2cm,thick,main node/.style=]
  \node[main node] (1) {$k_d^n$};   
  \node[main node] (2) [right of=1] {$k_d^n$};
  \node[main node] (3) [below of=1] {$k_d^n$};
  \node[main node] (4) [below of=2] {$k_d^n$};
 \path[every node/.style={font=\sffamily\small}]     
(1) edge node   {$A_j$} (2)          
(1) edge node    {$E_n$} (3)           
(3) edge node  {$\Si_n(A_{j-1})$} (4)
(2) edge node  {$E_n$}  (4)              
;
\end{tikzpicture}
\end{center}
meanwhile, for $j=1$,
\begin{center}
\begin{tikzpicture}
[->,>=stealth',shorten >=1pt,auto,node distance=2cm,thick,main node/.style=]
  \node[main node] (1) {$k_d^n$};   
  \node[main node] (2) [right of=1] {$k_d^n$};
  \node[main node] (3) [below of=1] {$k_d^n$};
  \node[main node] (4) [below of=2] {$k_d^n$};
 \path[every node/.style={font=\sffamily\small}]     
(1) edge node   {$A_1$} (2)          
(1) edge node    {$E_n$} (3)           
(3) edge node  {$\Si_n(A_{n})$} (4)
(2) edge node  {$E_n$}  (4)              
;
\end{tikzpicture}
\end{center}
Consequently, $\,^{\G_{d,n}}f=((g_i,g_{i'}))_{1\leq i\leq n}$, where $g_i=f_{i+1}, \,g_{i'}=f_{(i+1)'}\textit{ for }1\leq i\leq n-1\textit{ and }g_n=f_{1},\,g_{n'}=f_{1'}.$ In conclusion, for the morphism $f$, we have that $\,^{\G_{d,n}}f=f$. Thus 
$$^{\G_{d,n}^{n-1}}f\circ\cdots\circ^{\G_{d,n}}f\circ f=f\circ f\circ\cdots\circ f=((E_n^n,E_n^n))_{1\leq i\leq n}.$$ 

Nevertheless, $E_n^n=\id_n$, is the identity linear map. Hence, we can conclude that $f=((f_i,f_{i'}))_{1\leq i\leq n}$ is the morphism satisfying the desired properties. 
\end{proof}

\begin{prop}\label{propmaless}
Let $m$ be a positive integer, with $m<n$, and $M=\bigoplus_{j=0}^{m-1}\,^{\G_{d,n}^j}M_a$ be an $ii$-indecomposable representation. Then there exists an isomorphism $f:M\lra\,^{\G_{d,n}}M$ such that the induced automorphism $^{\G_{d,n}^{d-1}}f\circ\cdots\circ^{\G_{d,n}}f\circ f$ of $M$ is the identity.
\end{prop}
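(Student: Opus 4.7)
The strategy adapts Proposition \ref{propma}. First, realize $M=\bigoplus_{j=0}^{m-1}\,^{\G_{d,n}^j}M_a$ as a $k_d$-representation of $\mathcal{Q}_n$ with $k_d^m$ at each vertex, all $B_i=\id_m$, and $A_i=\id_m$ except at the $m$ vertices $\G_n^{j-1}(1)$ (for $1\leq j\leq m$), where $A_i=\diag(1,\ldots,\Si_m^{j-1}(a),\ldots,1)$ has its nontrivial entry in position $j$. The representation $\,^{\G_{d,n}}M$ has the same shape, shifted cyclically by $\G_n^{-1}$ and with entries twisted by $\Si_d$. The new difficulty compared with Proposition \ref{propma} is that $d=\mcm(n,m)>n$, so the $d$-fold cycling of a bare permutation matrix no longer absorbs the accumulated Galois conjugates of $a$.

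The remedy is an auxiliary scalar $b\in k_d$. Since $a^{(n-m)/\mcd(n,m)}\in k_m\subset k_d$ and every element of $k_m$ has codegree in $k_d$ divisible by $d/m$, Lemma \ref{nth-root} applied inside $\C\lp\E^{1/d}\rp$ yields $b\in k_d$ with $b^{d/m}=a^{(n-m)/\mcd(n,m)}$. Using the $m\times m$ cyclic permutation $E_m$ defined analogously to (\ref{eqen}), define $f\colon M\to\,^{\G_{d,n}}M$ by taking $f_i=\diag(1,\ldots,1,b^{-1})\,E_m$ at the vertices where $A_i=\id_m$, and adjust $f_i$ at the $m$ exceptional vertices so that the commutativity $f_{h(\alpha)}A_\alpha=A'_\alpha f_{t(\alpha)}$ holds for every arrow $\alpha$ of $\mathcal{Q}_n$. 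The $B$-arrow relations $f_{i'}=f_{i-1}$ propagate these choices around the cycle, and the closing-cycle consistency is precisely the defining equation for $b$.

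Finally, to verify that $^{\G_{d,n}^{d-1}}f\circ\cdots\circ f$ equals the identity at each vertex $v$, use $(^{\G_{d,n}^j}f)_v=\Si_d^j(f_{\G_n^{-j}(v)})$. The composition splits into the pure permutation part, which gives $E_m^d=\id_m$ since $m\mid d$, and a diagonal scaling. The scaling picks up one Galois conjugate of $a$ at each of the $d/n=m/\mcd(n,m)$ stages where $\G_n^{-j}(v)$ lands on an $a$-vertex, and one Galois conjugate of $b^{-1}$ at each of the $d/m$ complete $E_m$-cycles. By an orbit-sum identity of the type (\ref{Eq7}) together with $\Si_d|_{k_m}=\Si_m$, these accumulated products collapse respectively to $a^{(n-m)/\mcd(n,m)}$ and $b^{-d/m}$, which cancel by the construction of $b$. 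The main obstacle is precisely this bookkeeping: at each of the $n$ vertices and across the $d$ composition stages, one must track which Galois conjugate is contributed and show that the two orbit sums match. The match is guaranteed by Lemma \ref{nth-root}, whose sole role in the argument is to make $b^{d/m}=a^{(n-m)/\mcd(n,m)}$ available inside $k_d$.
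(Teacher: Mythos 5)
Your proposal follows essentially the same route as the paper's proof: the same matrix normal form for $M$, the same auxiliary scalar $b\in k_d$ with $b^{d/m}=a^{(n-m)/\mcd(n,m)}$ supplied by Lemma \ref{nth-root}, the same shape $f_i=(\text{diagonal correction})\cdot E_m$ propagated around the cycle, and the same final cancellation $a^{(n-m)/\mcd(n,m)}b^{-d/m}=1$. The only discrepancies are in the bookkeeping you leave implicit: the paper places the correction $ab^{-1}$ at the vertices $1\le i\le n-m$ (not at the exceptional vertices), and the number of composition stages contributing a conjugate of $a$ to each diagonal entry of the $d$-fold composite is $(n-m)/\mcd(n,m)$ rather than the $d/n$ you quote --- a count worth fixing, though it does not change the mechanism.
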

\begin{proof}
Consider the $ii-$indecomposable representation $M$ as the direct sum $\bigoplus_{j=0}^{m-1}\,^{\G_{d,n}^j}M_a$, where $M_a$ is a regular simple representation defined earlier for a generic element $a\in k_m$. We assume that $M$ has the form depicted in Figure \ref{figurema}. 
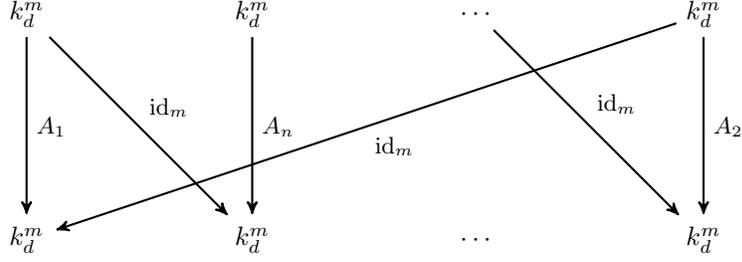
\begin{figure}
    \centering
\begin{tikzpicture}
[->,>=stealth',shorten >=1pt,auto,node distance=3cm,thick,main node/.style=]
  \node[main node] (1) {$k_d^m$};   
  \node[main node] (6) [below of =1]{$k_d^m$};   
  \node[main node] (2) [right of=1] {$k_d^m$};
  \node[main node] (7) [below of =2]{$k_d^m$};   
  \node[main node] (3) [right of=2] {$\cdots$};
    \node[main node] (8) [below of =3]{$\cdots$};
  \node[main node] (4) [right of=3] {$k_d^m$};
  \node[main node] (5) [below of=4] {$k_d^m$};
\path[every node/.style={font=\sffamily\small}]     
(1) edge node   {$A_1$} (6)         
     edge node         {$\id_m$} (7)           
(2) edge node  {$A_n$} (7)              
(4) edge node  {$A_2$} (5)              
   edge node         {$\id_m$} (6)
   (3) edge node  {$\id_m$} (5) ;
\end{tikzpicture}
    \caption{A representation of $\bigoplus_{j=0}^{m-1}\,^{\G_{d,n}^j}M_a$.}
    \label{figurema}
\end{figure}
In this case, $A_j=\diag(1,\cdots,\Si_d^{j-1}(a)\,\cdots,1)$ represents the diagonal matrix with 1 in all entries except the $\G_m^{j-1}(1)$-th entry, for $1\leq j\leq m$, and $A_j=\id_m$, for $m+1\leq j\leq n$. By Lemma \ref{nth-root}, there exists $b\in k_d$ such that $b^{d/m}=a^{\frac{n-m}{\mcd(n,m)}}$. We analyze the following cases: 
\begin{itemize}
\item If $\mcm(m,n)=n$, it suffices to take $f=((E_m,E_m))_{1\leq i\leq n}$. 
\item If $\mcm(m,n)\neq n$, we can consider $f_i,\,f_{i'}\in\M_{m\times m}(k_d)$, for $1\leq i\leq n$, the pair of matrizes defined as follows:
$$
\begin{array}{ccc}f_{i}=\diag(1,x_i,\cdots,1)E_m&\mbox{and}&f_{i'}=\left\{\begin{array}{ll}f_{i-1}&\mbox{if }2\leq i\leq n\\f_{n}&\mbox{if }i=1,\end{array}\right.\end{array}
$$
where $E_m$ is the matrix described in Equation (\ref{eqen}), $x_i=ab^{-1}$, if $1\leq i \leq n-m$, and $x_i=b^{-1}$, otherwise. 
\end{itemize}

Now, as established in the proof of Proposition \ref{propma}, we obtain that $\Si_d(A_{j-1})E_m=E_m A_j$, for $1\leq j<m$, and $\Si_d(A_{m})E_m=E_m A_1$. Therefore:
$$
\Si_d(A_{j-1})\diag(1,b^{-1},\cdots,1)E_m=\diag(1,b^{-1},\cdots,1)\Si_d(A_{j-1})E_m=\diag(1,b^{-1},\cdots,1)E_mA_j,
$$
$$
\Si_d(A_{m})\diag(1,b^{-1},\cdots,1)E_m=\diag(1,b^{-1},\cdots,1)\Si_d(A_{m})E_m=\diag(1,b^{-1},\cdots,1)E_mA_1,
$$
and, for $m+1\leq j\leq n-1$, it follows that
$$
\Si_d(A_j)f_{n+1-j}=\id_m\diag(1,ab^{-1},\cdots,1)E_m=\diag(1,ab^{-1},\cdots,1)E_m\id_m=f_{n-j}A_{j+1}.
$$
On the other hand, we have that
\begin{eqnarray*}
\Si(A_n)f_1&=&\id_m\diag(1,ab^{-1},\cdots,1)E_m\\
&=&\diag(1,b^{-1},\cdots,1)\diag(1,a,\cdots,1)E_m\\
&=&\diag(1,b^{-1},\cdots,1)E_m\diag(a,1,\cdots,1)\\
&=&\diag(1,b^{-1},\cdots,1)E_m A_1.
\end{eqnarray*}
Thus, we prove that the pair $f=((f_i,f_{i'}))_{1\leq i\leq n}$ defines a morphism from $M$ to $\,^{\G_{d,n}}M$. Consequently, 
$$
\,^{\G_{d,n}}f=((\Si_d(f_{\G^{-1}_n(i)}),\Si_d(f_{\G^{-1}_n(i')})))_{1\leq i\leq n},\quad\text{and}\quad 
\,^{\G_{d,n}^j}f=((\Si_d^j(f_{\G^{-j}_n(i)}),\Si_d^j(f_{\G^{-j}_n(i')})))_{1\leq i\leq n}.
$$

Now, we start grouping the first $m$-morphisms to obtain
\begin{eqnarray*}
\,^{\G_{d,n}^{m-1}}f\circ \cdots\circ\,^{\G_{d,n}}f\circ f&=&((\Si_d^{m-1}(f_{\G^{-j}_n(i)})\cdots\Si_d(f_{\G^{-1}_n(i)})f_i,\Si_d^{m-1}(f_{\G^{-j}_n(i')})\cdots\Si_d(f_{\G^{-1}_n(i')})f_{i'})))_{1\leq i\leq n}\\
&=&((\diag(x_i,\cdots,\,\Si_d^{m-1}(x_{\G_n^{1-m}(i)})),g_{i',1}))_{1\leq i\leq n},
\end{eqnarray*}
where $g_{i',1}:=g_{\G_n(i),1}$ and $g_{i,1}=\diag(x_i,\Si_d(x_{\G_n^{-1}(i)}),\cdots,\,\Si_d^{m-1}(x_{\G_n^{1-m}(i)}))$. Then, in the more general case, for $1\leq l\leq d/m$, it follows that
\begin{eqnarray*}
\,^{\G_{d,n}^{lm-1}}f\circ \cdots\circ\,^{\G_{d,n}^{m(l-1)}}f&=&((g_{i,l},g_{i',l}))_{1\leq i\leq n},
\end{eqnarray*}
where $g_{i',l}:=g_{\G_n(i),l}$ and $g_{i,l}=\diag(\Si_d^{m(l-1)}(x_{\G_n^{m-ml}(i)}),\cdots,\,\Si_d^{ml-1}(x_{\G_n^{1-ml}(i)}))$. 

From the foregoing computations, we derive that $g_{1,d/n}g_{1,(d/n)-1}\cdots g_{1,1}=\diag(y_1,\cdots,y_m),$ where
$$
y_j=\Si_d^{m(\frac{d}{m}-1)+j-1}(x_{\G_n^{m(1-\frac{d}{m})+1-j}(1)})\cdots\Si_d^{m+j-1}(x_{\G_n^{1-(m+j)}(1)})\Si_d^{j-1}(x_{\G_n^{1-j}(1)}).
$$
Since either $x_i=ab^{-1}$ or $x_i=b^{-1}$, a well-known combinatorial problem determines that the number of factors of $y_j$ for which some $x_{\G_n^{m(1-l)+1-j}(1)}$ is equal $ab^{-1}$, is given by  $\frac{n-m}{\mcd(n,m)}$. Now, using the fact of $a\in k_m$ and Equation \ref{Eq7} we conclude that
$$
y_j=\Si_d^{j-1}(a^{\frac{n-m}{\mcd(n,m)}})\Pi_{i=0}^{\frac{d}{m}-1}\Si_d^{mi+j-1}(b^{-1})=\Si_d^{j-1}(a^{\frac{n-m}{\mcd(n,m)}})\Si_d^{j-1}\left(\Pi_{i=0}^{\frac{d}{m}-1}\Si_d^{mi}(b^{-1})\right)=\Si_d^{j-1}(a^{\frac{n-m}{\mcd(n,m)}}b^{-\frac{d}{m}}).
$$
Consequently, $g_{1,d/n}g_{1,(d/n)-1}\cdots g_{1,1}=\id_m$. Moreover, it can be similarly proven that $g_{i,d/n}g_{i,(d/n)-1}\cdots g_{i,1}=\id_m$, for $1\leq i\leq n$. In conclusion, $^{\G_{d,n}^{d-1}}f\circ\cdots\circ^{\G_{d,n}}f\circ f$ is the identity, which completes the proof of the proposition.
\end{proof}

\begin{prop}\label{propmagreater}
Let $m$ be an positive integer, with $m>n$, and $M=\bigoplus_{j=0}^{m-1}\,^{\G_{d,n}^j}M_a$ be an $ii$-indecomposable representation. Then there exists an isomorphism $f:M\lra\,^{\G_{d,n}}M$ such that the induced automorphism $^{\G_{d,n}^{d-1}}f\circ\cdots\circ^{\G_{d,n}}f\circ f$ of $M$ is the identity.
\end{prop}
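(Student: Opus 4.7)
The plan is to follow the scheme of Proposition \ref{propmaless} but adapted to the fact that when $m>n$ the orbit $\{\G_n^j(1)\mid 0\leq j\leq m-1\}$ wraps around the vertex set $\{1,\dots,n\}$, so each diagonal block $A_l$ of $M$ now carries several non-trivial entries of the form $\Sigma_d^j(a)$ rather than just one. The first step is to write $M=\bigoplus_{j=0}^{m-1}\,^{\G_{d,n}^j}M_a$ explicitly: at each arrow $\alpha_l$, the matrix $A_l\in \M_{m\times m}(k_d)$ is diagonal with $\Sigma_d^j(a)$ in position $j+1$ precisely when $\G_n^j(1)=l$, and $1$ elsewhere; the matrices along the $\alpha_{l'}$-arrows are obtained by the parallel cyclic shift. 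The representation $\,^{\G_{d,n}}M$ is then described by the conjugated blocks $\Sigma_d(A_{\G_n^{-1}(l)})$.

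Next, invoke Lemma \ref{nth-root} to choose $b\in k_d$ with $b^{d/m}=a^{(m-n)/\mcd(n,m)}$, the analogue of the choice made in Proposition \ref{propmaless} (valid since $a\in k_m$ forces the relevant codegree condition, and in the degenerate case $\mcm(m,n)=m$ we may just take $b=1$ and $f_i=f_{i'}=E_m$). Define $f=((f_i,f_{i'}))_{1\leq i\leq n}$ by $f_i=D_iE_m$ and $f_{i'}=f_{i-1}$ (with $f_{1'}=f_n$), where $E_m$ is the cyclic shift matrix from equation (\ref{eqen}) and each $D_i$ is a diagonal matrix whose entries lie in $\{1,\,b^{-1},\,ab^{-1}\}$. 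The positions of $ab^{-1}$ versus $b^{-1}$ are forced by the requirement that $\Sigma_d(A_{\G_n^{-1}(l)})f_l=f_{l'}A_l$ for every arrow: wherever the shift $E_m$ carries a slot holding $\Sigma_d^{j}(a)$ to a target slot that carries $\Sigma_d^{j+1}(a)$, no correction is needed, but at the finitely many ``mismatched'' positions (where a factor of $a$ is created or destroyed by the shift) one inserts an $ab^{-1}$ or $b^{-1}$ factor. A direct computation, identical in structure to the one at the end of the proof of Proposition \ref{propmaless}, verifies the intertwining relations.

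Finally, the $d$-fold composition $^{\G_{d,n}^{d-1}}f\circ\cdots\circ\,^{\G_{d,n}}f\circ f$ is diagonal, and on each entry it equals a product of $d/m$ Galois conjugates of the various $x_i\in\{1,b^{-1},ab^{-1}\}$. A combinatorial count shows that the exponent of $a$ appearing in each diagonal entry equals $(m-n)/\mcd(n,m)$ and that exactly $d/m$ factors of $b^{-1}$ arise (grouped into one complete Galois orbit under $\langle\Sigma_d^m\rangle$ acting on $k_d$). Using $a\in k_m$, the identity $\prod_{i=0}^{n-1}\Sigma_n^i(\varepsilon^{j/n})=\varepsilon^j$ of equation (\ref{Eq7}), and the defining equation $b^{d/m}=a^{(m-n)/\mcd(n,m)}$, every diagonal entry collapses to $1$, so the composition is the identity.

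The main obstacle is the bookkeeping in the second and third steps: because $m>n$, each $A_l$ carries multiple $\Sigma_d^j(a)$ entries whose positions depend on the interaction between the inner cycle of length $m$ (given by $E_m$) and the outer cycle of length $n$ (given by $\G_n$), and the placement of $ab^{-1}$ versus $b^{-1}$ in the $D_i$'s must be tracked carefully against both cycles simultaneously. Once this placement is fixed, the final collapse to the identity is a routine application of Galois-theoretic norm computations, as in Proposition \ref{propmaless}.
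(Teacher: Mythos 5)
Your proposal follows the same route as the paper's own proof: write $M$ in the explicit block--diagonal form, take $f_i=D_iE_m$ with diagonal correction factors built from a $\tfrac{d}{m}$-th root $b$ supplied by Lemma \ref{nth-root}, reduce the $d$-fold composite to a product of Galois conjugates of the corrections, and kill it with the defining equation for $b$. The paper does exactly this, except that it is explicit where you are not: it puts a single correction in the first slot, $f_i=\diag(x_i,1,\dots,1)E_m$ with $x_i=\Si_d(a)b^{-1}$ for $l\le i\le n-1$ and $x_i=b^{-1}$ otherwise, where $l\equiv m\pmod n$, and it normalizes $b$ by $b^{d/m}=\Si_d(a)^{\frac{n-l}{\mcd(n,l)}}$; the verification is then deferred to the $y_j$ computation of Proposition \ref{propmaless}.

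The gap sits precisely in your second and third steps, which is where all the content of the proposition lives. First, the placement of the $ab^{-1}$ versus $b^{-1}$ entries is \emph{not} forced by the intertwining relations: those relations only determine the ratios $(D_i)_j/(D_{i-1})_j$, leaving one free scalar per diagonal slot, and different gauges distribute the $a$-factors differently among the $D_i$. As a result, the exponent of $a$ appearing in each diagonal entry of the $d$-fold composite is only pinned down modulo $\tfrac{d}{m}$ (the contribution of a twisted norm of a power of $a$), so the correct normalization of $b$ depends on the placement you actually choose --- which you never write down. Second, your asserted exponent $\frac{m-n}{\mcd(n,m)}$ disagrees with the paper's $\frac{n-l}{\mcd(n,l)}$, $l\equiv m\pmod n$: for $n=3$, $m=4$ these are $1$ and $2$, which are not even congruent modulo $\tfrac{d}{m}=3$, so at most one of the two normalizations can be compatible with any fixed choice of the $D_i$. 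Without an explicit definition of the $D_i$, a check of the intertwining relations, and the accompanying orbit count (the analogue of the computation of $y_j$ in Proposition \ref{propmaless}), the final ``collapse to the identity'' is an assertion rather than a proof; supplying that count, and reconciling it with whichever power of $a$ you use to define $b$, is exactly what is missing.
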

\begin{proof}
As in the proof of Proposition \ref{propmaless} we consider the $ii-$indecomposable representation $M$ as the direct sum $\bigoplus_{j=0}^{m-1}\,^{\G_{d,n}^j}M_a$, where $M_a$ is a regular simple representation and $a\in k_m$, a generic element. We can assume that $M$ has the form of Figure \ref{figurema}. In this case, $A_i=\diag(x_{i,1},\cdots,\,x_{i,m})\in\M_{m\times m}(k_d)$, where
$$
x_{i,j}=\left\{\begin{array}{cl}
     \Si_d^{m+1-j}(a)&\mbox{if }j\equiv\G_n^{i-1}(1)  \\
     1 & \mbox{otherwise.}
\end{array}\right.
$$
Let $l$ be a positive integer such that $l\equiv m (\text{mod}\, n)$ and we consider $b\in k_d$, such that $b^{d/m}=\Si_d(a)^{\frac{n-l}{\mcd(n,l)}}$. If $\mcm(m,n)=m$, it suffices to take $f=((E_m,E_m))_{1\leq i\leq n}$. Otherwise, that is $\mcm(m,n)\neq m$, we can consider  $f_i,\,f_{i'}\in\M_{m\times m}(k_d)$, for $1\leq i\leq n$, the pair of matrizes defined as follows:
$$
\begin{array}{ccc}f_{i}=\diag(x_i,1,\cdots,1)E_m&\mbox{and}&f_{i'}=\left\{\begin{array}{ll}f_{i-1}&\mbox{if }2\leq i\leq n\\f_{n}&\mbox{if }i=1\end{array}\right.\end{array}
$$
where $E_m$ is the matrix described in Equation (\ref{eqen}), $x_i=\Si_d(a)b^{-1}$, if $l\leq i \leq n-1$,  and $x_i=b^{-1}$, otherwise. Demonstrating that the pair $f=((f_i,f_{i'}))_{1\leq i\leq n}$ is the morphism with the desired properties follows a similar line of reasoning to the proof of Proposition \ref{propmaless}.
\end{proof}
To summarize the three cases above, we can state that:

\begin{prop}\label{propmorphism}
Let $M=\bigoplus_{j=0}^{n-1}\,^{\G_{d,n}^j}M_a$ be an $ii$-indecomposable representation. Then there exists an isomorphism $f:M\lra\,^{\G_{d,n}}M$ such that the induced automorphism $^{\G_{d,n}^{l-1}}f\circ\cdots\circ^{\G_{d,n}}f\circ f$ of $M$ is the identity, for some positive integer $l$. Moreover, if $n=m$ then $l=n$ and if $n\neq m$ then $l=d$. 
\end{prop}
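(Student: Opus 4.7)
The plan is to obtain this statement as an immediate consequence of the three case-analysis propositions that precede it. Since $m$ and $n$ are positive integers, exactly one of the three mutually exclusive cases $m=n$, $m<n$, or $m>n$ holds, and each of these is covered by one of Propositions \ref{propma}, \ref{propmaless}, and \ref{propmagreater}.

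First I would unpack the hypothesis: the generic element $a\in k_m$ determines the $ii$-indecomposable $M=\bigoplus_{j=0}^{m-1}\,^{\G_{d,n}^j}M_a$, and by Lemma \ref{lemmaii} this direct sum has exactly $m$ distinct (up to isomorphism) summands since $\Si_d^j(a)\ne a$ for $1\le j\le m-1$. Thus $m$ is intrinsic to $M$, and the comparison of $m$ with $n$ gives the trichotomy needed to select the appropriate previous proposition.

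Next I would treat the three cases in turn. In the case $n=m$, Proposition \ref{propma} produces an $f$ built componentwise from the cyclic permutation matrix $E_n$, whose $n$-fold iterate $E_n^n$ equals $\id_n$; thus $l=n$ works and the ``moreover'' clause is verified. In the cases $n\neq m$ (splitting further into $m<n$ and $m>n$), Propositions \ref{propmaless} and \ref{propmagreater} respectively construct $f$ explicitly by twisting $E_m$ by a diagonal factor whose entries involve a root $b\in k_d$ of the appropriate power of $a$ (or $\Si_d(a)$); the existence of $b$ is the content of Lemma \ref{nth-root}. In both of these cases the iteration required to recover $\id_M$ has length $l=d$, again matching the statement.

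Finally I would remark that nothing new needs to be verified: the heavy lifting --- the explicit choice of $f$ and the combinatorial identity $\Pi_{i=0}^{d/m-1}\Si_d^{mi}(b^{-1})\cdot a^{(n-m)/\mcd(n,m)}=1$ underlying the telescoping --- was already done in Propositions \ref{propmaless} and \ref{propmagreater} using the product formula $\Pi_{i=0}^{n-1}\Si_n^i(\E^{j/n})=\E^j$ recorded in \eqref{Eq7}. The present proposition is therefore only a packaging statement, and its proof consists of invoking the three preceding results according to the trichotomy of $m$ versus $n$. No additional obstacle is anticipated beyond writing the case split cleanly.
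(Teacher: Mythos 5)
Your proposal matches the paper exactly: the paper offers no separate argument for this proposition, introducing it with ``To summarize the three cases above,'' so its proof is precisely the trichotomy $m=n$, $m<n$, $m>n$ resolved by Propositions \ref{propma}, \ref{propmaless}, and \ref{propmagreater}, which is what you do. Your reading of the direct sum as running over $j=0,\dots,m-1$ (correcting the $n-1$ in the displayed statement) is also the intended one, consistent with the two preceding propositions.
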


In contrast, there remains an entire family of $ii$-indecomposable regular simple representations to explore, namely the  $M_{(j)}$ representations, defined by
\begin{enumerate}
    \item[($M_{j}$1)] For $1\leq i,\, i'\leq j$, the vector space associated with the vertex $i$ and $i'$ is $k_n$. For the remaining vertices, the vector space is $0$.
    \item[($M_{j}$2)] For $1\leq i\leq j$ and $2\leq i'\leq j$ the $k_n$-morphism associated to the arrows $\A_i$ and $\A_{i'}$ is $\id_{k_n}$. For the remaining arrows, the morphisms are the null morphism.
\end{enumerate}

And $N_{(j)}$ representations defined by
\begin{enumerate}
    \item[($N_{j}$1)] For $1\leq i\leq j$ and $2\leq i'\leq j+1$ the vector space associated to the vertex $i$ and $i'$ is $k_n$. For the remaining vertices, the vector space is $0$
    \item[($N_{j}$2)] For $2\leq i\leq j$ and $2\leq i'\leq j+1$ the $k_n$-morphism associated to the arrows $\A_i$ and $\A_{i'}$ is $\id_{k_n}$. For the remaining arrows, the morphisms are the null morphism.
\end{enumerate}
Figure \ref{figuremn}, illustrates a graphical representation of $M_{(j)}$ and $N_{(j)}$.
\begin{figure}[h]
\centering
\begin{tikzpicture}
[->,>=stealth',shorten >=1pt,auto,node distance=2cm,thick,main node/.style=]
  \node[main node] (1) {$k_n$};   
  \node[main node] (6) [below of =1]{$k_n$};   
  \node[main node] (2) [right of=1] {$\cdots$};
  \node[main node] (7) [below of =2]{$\cdots$};   
  \node[main node] (3) [right of=2] {$k_n$};
    \node[main node] (8) [below of =3]{$k_n$};
  \node[main node] (4) [right of=3] {$0$};
  \node[main node] (5) [below of=4] {$0$};
  \node[main node] (9) [right of=4] {$\cdots$};
   \node[main node] (10) [below of=9] {$\cdots$}; 
\node[main node] (11) [right of=9] {$0$};
   \node[main node] (12) [below of=11] {$0$};
   \path[every node/.style={font=\sffamily\small}]     
(1) edge node   {$1$} (6)         
     edge node         {$1$} (7)           
(2) edge node  {} (8)              
(3) edge node  {$1$} (8)
    edge node  {} (5)
(4) edge node  {} (5)              
   edge node         {} (10) 
(11) edge node {}(6)
    edge node {}(12)
   (9) edge node {}(12);
\end{tikzpicture}
\begin{tikzpicture}
[->,>=stealth',shorten >=1pt,auto,node distance=2cm,thick,main node/.style=]
  \node[main node] (1) {$k_n$};   
  \node[main node] (6) [below of =1]{$0$};   
  \node[main node] (2) [right of=1] {$\cdots$};
  \node[main node] (7) [below of =2]{$\cdots$};   
  \node[main node] (3) [right of=2] {$k_n$};
    \node[main node] (8) [below of =3]{$k_n$};
  \node[main node] (4) [right of=3] {$0$};
  \node[main node] (5) [below of=4] {$k_n$};
  \node[main node] (9) [right of=4] {$\cdots$};
   \node[main node] (10) [below of=9] {$\cdots$}; 
\node[main node] (11) [right of=9] {$0$};
   \node[main node] (12) [below of=11] {$0$};
   \path[every node/.style={font=\sffamily\small}]     
(1) edge node   {$0$} (6)         
     edge node         {$1$} (7)           
(2) edge node  {$1$} (8)              
(3) edge node  {$1$} (8)
    edge node  {} (5)
(4) edge node  {} (5)              
   edge node         {} (10) 
(11) edge node {}(6)
    edge node {}(12)
   (9) edge node {}(12);
\end{tikzpicture}
\caption{For $1\leq j\leq n$, we have the family of representations $M_{(j)}$ and $N_{(j)}$.} 
\label{figuremn}
\end{figure}
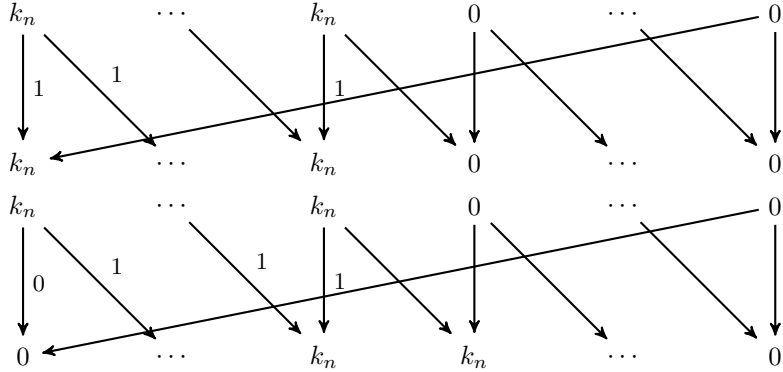
In general, we can think of the sum $\bigoplus_{i=0}^{n-1}\,^{\G_{n,n}^i}M_{(j)}$ as follows
\begin{center}
\begin{tikzpicture}
[->,>=stealth',shorten >=1pt,auto,node distance=3.3cm,thick,main node/.style=]
  \node[main node] (1) {$k_n^j$};   
  \node[main node] (6) [below of =1]{$k_n^j$};   
  \node[main node] (2) [right of=1] {$\cdots$};
  \node[main node] (7) [below of =2]{$\cdots$};   
  \node[main node] (3) [right of=2] {$k_n^j$};
    \node[main node] (8) [below of =3]{$k_n^j$};
\path[every node/.style={font=\sffamily\small}]     
(1) edge node   {$\id_j$} (6)         
     edge node         {$B_1$} (7)           
(2) edge node  {$B_{n-1}$} (8)              
(3) edge node  {$\id_{j}$} (8) 
    edge node  {$B_{n}$} (6);
\end{tikzpicture}
\end{center}

where
  $$B_i=\left\{\begin{array}{cc}
         \id_j-E_{j+1-i,\,j+1-i}& \mbox{ si }1\leq i\leq j \\
         \left[\begin{array}{cc}
             0_{1\times j-1} &0_{1\times1}  \\
             \id_{j-1} &0_{j-1\times 1} 
         \end{array}\right]& \mbox{ si }j+1\leq i\leq n
    \end{array}\right.$$
and $E_{i,j}$ denotes the matrix whose entries are all $0$ except for the entry at position $i,j$, which is $1$.

\begin{remark}\label{rem:mor}
{\em Given any integer $n\geq 2$, the following equalities are satisfied:
     \begin{eqnarray*}
         E_j^{-1}B_i&=&B_{i+1}E_j^{-1}\mbox{ if }1\leq i\leq j-1\\
    \id_jB_j&=&B_{j+1}E_j^{-1}\\
    E_j^{-1}B_n&=&B_1\id_j.
     \end{eqnarray*}
Indeed, if $1\leq i\leq j-1$, we obtain that
    \begin{eqnarray*}
        E_j^{-1}B_i&=&E_j^{-1}(\id_j-E_{j+1-i,j+1-i})\\
        &=&E_j^{-1}-E_{j-i,j+1-i}\\
        &=&(\id_j-E_{j-i,j-i})E_j^{-1}\\
        &=&B_{i+1}E_j^{-1}.
    \end{eqnarray*}
Conversely, multiplying any matrix by $E_j^{-1}$from the left shifts each column one position to the right. Similarly, multiplying any matrix by $E_j^{-1}$ from the right shifts each row one position upward.}     
\end{remark}

Now, we aim to demonstrate that for these representations, $M_{(j)}$ and $N_{(j)}$, there exist morphisms as described in Proposition \ref{propmorphism}.  

\begin{lema}\label{lemmamj}
There exists a natural automorphism $f:\bigoplus_{i=0}^{n-1}\,^{\G_{n,n}^i}M_{(j)}\lra\,^{\G_{n,n}}(\bigoplus_{i=0}^{n-1}\,^{\G_{n,n}^i}M_{(j)})$, such that 
        $$\,^{\G_{n,n}^{n-1}}f\circ\cdots\circ\,^{\G_{n,n}}f\circ f=\id.$$
    \end{lema}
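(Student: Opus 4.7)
The plan is to mimic Proposition \ref{propma} by defining $f$ in terms of the $j \times j$ cyclic shift matrix $E_j$ (the analogue of Equation \eqref{eqen} with $n$ replaced by $j$) and the identity, but piecewise across the vertices. Specifically, I would set
\[
f_v = f_{v'} = \begin{cases} E_j^{-1} & \text{if } 1 \leq v \leq j,\\ \id_j & \text{if } j+1 \leq v \leq n,\end{cases}
\]
and take $f = ((f_v, f_{v'}))_{1 \leq v \leq n}$. The constraint $f_{v'} = f_v$ at each $v$ is forced by the arrows $\alpha_v: v \to v'$, whose matrix in both $M := \bigoplus_{i=0}^{n-1}{}^{\G_{n,n}^i}M_{(j)}$ and ${}^{\G_{n,n}}M$ is $\id_j$, so the commutation $f_{v'}\id_j = \id_j f_v$ is automatic.

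The verification that $f$ commutes with the remaining arrows $\alpha_{(k+1)'}: k \to (k+1)'$ (carrying matrix $B_k$ in $M$ and $B_{k+1}$ in ${}^{\G_{n,n}}M$, with indices cyclic modulo $n$) splits into four cases that correspond to the three identities in Remark \ref{rem:mor}. For $1 \leq k \leq j-1$, both $f_{(k+1)'}$ and $f_k$ are $E_j^{-1}$ and the first identity $E_j^{-1}B_k = B_{k+1}E_j^{-1}$ applies. For $k = j$, we have $f_{j+1} = \id_j$ and $f_j = E_j^{-1}$, matching exactly $\id_j B_j = B_{j+1}E_j^{-1}$. For $j+1 \leq k \leq n-1$, both $f$'s are $\id_j$ and $B_k = B_{k+1}$ since the $B_k$'s are constantly the shift matrix in that range. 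Finally, for $k = n$, we have $f_1 = E_j^{-1}$ and $f_n = \id_j$, matching $E_j^{-1} B_n = B_1\id_j$.

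For the $n$-fold composition I use the description $({}^{\G_{n,n}^r}f)_v = f_{\G_n^{-r}(v)} = f_{v+r}$ from Subsection \ref{Invariantrep}. At each vertex $v$ the composed map is the matrix product $f_{v+n-1}\cdots f_{v+1}f_v$, which is a product of all $n$ matrices $f_1,\dots,f_n$ in a cyclic order. Because $\id_j$ commutes with $E_j^{-1}$, this product equals $(E_j^{-1})^j\cdot \id_j^{\,n-j} = E_j^{-j}$, and since $E_j$ is the cyclic shift of order $j$ one has $E_j^{-j} = \id_j$, as required.

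The main obstacle is recognizing why a uniform choice $f_v = E_j^{-1}$ fails and finding the correct piecewise pattern: the change in the matrix type of $B_k$ at $k = j$ and the wrap at $k = n$ force the second and third identities of Remark \ref{rem:mor}, which in turn pin down exactly where $f_v$ must switch from $E_j^{-1}$ to $\id_j$ and back. Once the correct partition of the vertex set into $\{1,\dots,j\}$ and $\{j+1,\dots,n\}$ is found the check is routine, and the fact that $E_j$ has order $j$ (not $n$) is perfectly compensated by there being exactly $j$ vertices where $f$ equals $E_j^{-1}$.
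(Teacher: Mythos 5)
Your proposal is correct and follows essentially the same route as the paper: the same piecewise definition $f_i=f_{i'}=E_j^{-1}$ for $1\leq i\leq j$ and $\id_j$ otherwise, the same appeal to the three identities of Remark \ref{rem:mor} to check that $f$ is a morphism, and the same computation of the $n$-fold composite as the cyclic product $(E_j^{-1})^j\cdot\id_j^{\,n-j}=\id_j$. Your case analysis for the arrow compatibility is in fact slightly more explicit than the paper's, which simply cites the Remark.
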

    \begin{proof}
        Let $f=((f_i,f_{i'}))_{1\leq i\leq n}$ be defined as follows
        $$
        f_{i'}=f_i=\left\{\begin{array}{cc}
            E_j^{-1}&\mbox{ if }1\leq i\leq j \\
             \id_j& \mbox{otherwise.} 
        \end{array}\right.
        $$
As a consequence of Remark \ref{rem:mor}, we have that $f$ is an automorphism from $\bigoplus_{i=0}^{n-1}\,^{\G_{n,n}^i}M_{(j)}$ to $\,^{\G_{n,n}}(\bigoplus_{i=0}^{n-1}\,^{\G_{n,n}^i}M_{(j)})$. Since $\,^{\G_{n,n}}f=((g_i,g_{i'}))_{1\leq i\leq n}$, where $g_i=f_{i+1}$, $g_{i'}=f_{(i+1)'}$, for $1\leq i\leq n-1$, and $g_n=f_{1}$, $g_{n'}=f_{(1)'}$, we conclude that 
$$
^{\G_{n,n}^{n-1}}f\circ\cdots\circ^{\G_{n,n}}f\circ f=(((E_j^{-1})^j,(E_j^{-1})^j))_{1\leq i\leq n}
$$
Since $(E_j^{-1})^j=\id_j$, this completes the proof of the proposition..
    \end{proof}
\begin{lema}\label{lemmanj}
There exists a natural automorphism $f:\bigoplus_{i=0}^{n-1}\,^{\G_{n,n}^i}N_{(j)}\lra\,^{\G_{n,n}}(\bigoplus_{i=0}^{n-1}\,^{\G_{n,n}^i}N_{(j)})$, such that 
        $$\,^{\G_{n,n}^{n-1}}f\circ\cdots\circ\,^{\G_{n,n}}f\circ f=\id.$$
    \end{lema}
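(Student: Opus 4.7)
The plan is to mimic the argument used for Lemma \ref{lemmamj} verbatim, adjusting the indices to account for the fact that $N_{(j)}$ has its ``active'' sinks shifted by one position relative to $M_{(j)}$. First I would write down the explicit matrix description of $\bigoplus_{i=0}^{n-1}\,^{\G_{n,n}^i}N_{(j)}$: at every vertex the vector space is $k_n^j$, the arrows $\A_i$ carry the identity $\id_j$, while the arrows $\A_{i'}$ carry matrices $B'_i$ that differ from the $B_i$ appearing in the $M_{(j)}$ case only by a cyclic shift of the index $i$ (this is because in $N_{(j)}$ it is the arrows of type $\A_{i'}$ that are the distinguished ones, whereas for $M_{(j)}$ the distinguished arrows were of type $\A_i$).

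Next, I would define the candidate automorphism $f=((f_i,f_{i'}))_{1\leq i\leq n}$ by setting
\[
f_i=f_{i'}=\begin{cases} E_j^{-1} & \text{if the vertex indexed by } i \text{ (resp. } i') \text{ lies in the support of } N_{(j)},\\ \id_j & \text{otherwise.}\end{cases}
\]
Concretely, the indices on which $E_j^{-1}$ is placed are precisely those for which the corresponding block in $N_{(j)}$ is nonzero; the remaining components get $\id_j$. This gives exactly the analogue, up to the one-position shift, of the morphism constructed in Lemma \ref{lemmamj}.

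To verify that $f$ is actually a morphism of representations I would invoke the three identities recorded in Remark \ref{rem:mor}, namely $E_j^{-1}B_i=B_{i+1}E_j^{-1}$ for $1\le i\le j-1$, $\id_j B_j=B_{j+1}E_j^{-1}$, and $E_j^{-1}B_n=B_1\id_j$. Applied to the shifted family $B'_i$, each square whose two horizontal arrows are $\id_j$ or an appropriate $B'_i$ commutes. The main (very small) obstacle is bookkeeping: one must check that after the index shift the boundary cases, where the identity and $E_j^{-1}$ blocks meet, still fit one of the three patterns of Remark \ref{rem:mor}; this amounts to observing that the ``last active'' arrow of $N_{(j)}$ now sits at position $j+1$ instead of $j$, so the roles of the two transitional equalities in Remark \ref{rem:mor} are swapped but their combined effect is unchanged.

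Finally, the composition $\,^{\G_{n,n}^{n-1}}f\circ\cdots\circ\,^{\G_{n,n}}f\circ f$ is computed exactly as in the proof of Lemma \ref{lemmamj}. Because $\,^{\G_{n,n}}$ acts on a tuple $((h_i,h_{i'}))_{1\le i\le n}$ by the cyclic shift $h_i\mapsto h_{i+1}$, $h_{i'}\mapsto h_{(i+1)'}$ (indices mod $n$), each of the $j$ components where we placed $E_j^{-1}$ gets hit $j$ consecutive times during the $n$-fold composition, while each of the remaining $n-j$ components gets hit by identities. The resulting endomorphism at every vertex is $(E_j^{-1})^j=\id_j$, so $\,^{\G_{n,n}^{n-1}}f\circ\cdots\circ\,^{\G_{n,n}}f\circ f=\id$, completing the proof.
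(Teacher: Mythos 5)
Your proposal is correct and follows essentially the same route as the paper: the paper's own proof of this lemma is a single sentence deferring to the argument of Lemma \ref{lemmamj}, and your write-up is exactly that argument carried over, with $E_j^{-1}$ placed on the (shifted) support of $N_{(j)}$, commutativity checked via Remark \ref{rem:mor}, and the $n$-fold composition reducing to $(E_j^{-1})^j=\id_j$. You in fact supply more detail than the paper does, including the only genuinely delicate point (the transitional squares where $E_j^{-1}$ meets $\id_j$).
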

    \begin{proof}
The proof follows a similar line of reasoning to the proof of Lemma \ref{lemmamj}.
    \end{proof}
The following remark establishes a crucial connection between the aforementioned representations and the string representations of the crown quiver. Specifically:
\begin{remark}\label{remarkstring}
{\em Let $n\in\Z_{>0}$ and $1\leq j\leq n$, then $M_{(j)}=M(s_{1',2j-1})$ and $N_{(j)}=M(s_{1,2j-3})$. Moreover, for $1\leq i\leq n-1$ we have $^{\G_{d,n}^i}M_{(j)}=M(s_{(n+1-i)',\,2j-1})$ and $^{\G_{d,n}^i}N_{(j)}=M(s_{n+1-i,\,2j-1})$. Indeed, it follows from the definition of string module and $\G_n$, see Subsections \ref{string} and \ref{ncrown}.}
\end{remark}

\begin{teo}\label{classifi}
   Let $N$ be a regular simple $k_d\mathcal{Q}_n$-module. Then $N$ satisfies the conditions of Theorem \ref{lemma3.7} if and only if $N$ is isomorphic to either $M_a$, for a generic element $a\in k_m$, or is isomorphic to $M_{(j)}$ or $N_{(j)}$, for some $1\leq j\leq n$. 
\end{teo}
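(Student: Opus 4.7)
The plan hinges on recognizing that $k_d\mathcal{Q}_n$ is a string algebra. Each vertex of the $n$-crown is incident to at most two arrows, so condition (S1) of Subsection~\ref{string} holds; the path algebra carries no relations, so (S2)--(S3) are vacuous. Hence Theorem~\ref{teostringband} applies, and every indecomposable $k_d\mathcal{Q}_n$-module is a string module $M(S)$ or a band module $M(S',\varphi)$. The regular indecomposables are then identified via the defect criterion from Subsection~\ref{quivers}: by Lemmas~\ref{lemmastring} and~\ref{lemmabands}, the regular string modules are exactly those whose underlying string has odd length, while every band module is regular.

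For the $(\Leftarrow)$ direction I would verify directly that each listed module satisfies the two conditions of Theorem~\ref{lemma3.7}. For the band module $M_a$ (with $V=k_d$ one-dimensional) one has $\End(M_a)\cong k_d$, and the computation $^{\G_{d,n}}M_a\cong M_{\Si_d(a)}$ carried out in Subsection~\ref{hnhrep} shows that the minimality of $m$ in the Galois condition is equivalent to $a\in k_m$ being generic. For $M_{(j)}$ and $N_{(j)}$, the endomorphism computation forces every endomorphism to act as a common scalar on all nonzero vertex components, since each identity arrow in the string data identifies two vertex scalars; this yields $\End\cong k_d$ after extending scalars from $k_n$ to $k_d$. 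Since $\G_n$ cyclically permutes the starting vertex with period $n$ and $\Si_d$ restricts to $\Si_n$ on the $k_n$-valued data, the $\G_{d,n}$-orbit has size exactly $n$, and Lemmas~\ref{lemmamj}, \ref{lemmanj} together with Proposition~\ref{propmorphism} supply the compatibility isomorphisms required by Proposition~\ref{obs3}.

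For the $(\Rightarrow)$ direction: let $N$ be regular simple satisfying the conditions. If $N\cong M(S',\varphi)$ is a band module (for the unique band $S'$ of $\mathcal{Q}_n$), then $\End(N)\cong k_d$ forces the underlying vector space $V$ of the band data to be one-dimensional, since otherwise $\End_{k_d[T,T^{-1}]}(V,\varphi)$ is strictly larger than $k_d$. Thus $\varphi$ is multiplication by some $a\in k_d^{\times}$, giving $N\cong M_a$, and the orbit condition then forces $a$ to be generic in $k_m$. If instead $N\cong M(S)$ is a string module, then $S$ has odd length $2j-1$ with $1\leq j\leq n$; by the enumeration in Subsection~\ref{ncrown}, up to the equivalence $\sim_s$ one has $S\in\{s_{i',\,2j-1},\,s_{i,\,2j-1}\}$ for some starting vertex $i$. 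Since the reverse shift $\G_n$ cyclically permutes these starting vertices, one may choose an orbit representative with $i=1$, yielding $N\cong M_{(j)}$ or $N\cong N_{(j)}$ by Remark~\ref{remarkstring}.

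The main obstacle I anticipate is the band-module endomorphism analysis: one must rigorously show that $\End_{k_d[T,T^{-1}]}(V,\varphi)$ strictly contains $k_d$ whenever $\dim V\geq 2$, invoking the explicit description of band endomorphism rings from~\cite{BR87}. A secondary subtlety is confirming that Lemma~\ref{lemmastring}'s odd-length conclusion extends to string lengths in the range $n<j\leq 2n-1$ not explicitly covered by its hypothesis, so that every odd-length string of $\mathcal{Q}_n$ indeed falls under the enumeration in Subsection~\ref{ncrown} and thus corresponds via Remark~\ref{remarkstring} to some $M_{(j)}$ or $N_{(j)}$.
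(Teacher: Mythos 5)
Your proposal is correct and follows essentially the same route as the paper: both directions rest on the Butler--Ringel classification (Theorem~\ref{teostringband}), the defect computations of Lemmas~\ref{lemmastring} and~\ref{lemmabands}, Remark~\ref{remarkstring} to identify odd-length string modules with the $M_{(j)}$, $N_{(j)}$, and Proposition~\ref{propmorphism} together with Lemmas~\ref{lemmamj} and~\ref{lemmanj} for the converse. The two subtleties you flag (one-dimensionality of the band datum forced by simplicity, and extending the odd-length defect argument to strings of length up to $2n-1$) are real but routine, and the paper's own proof passes over them just as quickly.
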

\begin{proof}
If $N$ is isomorphic to either $M_a$, for a generic element $a\in k_m$, or is isomorphic to $M_{(j)}$ or $N_{(j)}$, for some $1\leq j\leq n$, then $N$ satisfies the conditions of Theorem \ref{lemma3.7}, due to Proposition \ref{propmorphism}, Lemmas \ref{lemmamj} and \ref{lemmanj}, respectively.

Conversely, suppose $N$ is a regular simple $k_d\mathcal{Q}_n$-module that satisfies the conditions of Theorem \ref{lemma3.7}. Invoking Theorem \ref{teostringband}, Lemmas \ref{lemmastring} and \ref{lemmabands} and Remark \ref{remarkstring}, we conclude that the indecomposable regular representations are isomorphic to $M_{(j)}$ or $N_{(j)}$ or $M(S',\varphi)$. If $N$ is not isomorphic to $M_{(j)}$ or $N_{(j)}$, then $N\cong M(S',\varphi)$.  Since $N$ is also simple, this implies that $(V,\varphi)=(k_d, a)$, for some $a\in k_d$. The only elements that satisfy the conditions of Theorem \ref{lemma3.7} are the generic elements. Therefore, $N\cong M_a$ for some generic $a\in k_d$.
\end{proof}

A regular simple representation $X$ is said to be \textit{homogeneous} if $k_d\otimes X\cong\bigoplus_{j=0}^{n-1}\,^{\G_{d,n}^j}M_a$, where $a\in k_n$ is a generic element. If $k_d\otimes X\cong\bigoplus_{j=0}^{n-1}\,^{\G_{n,n}^j}M_{(j)}$ or $k_d\otimes X\cong\bigoplus_{j=0}^{n-1}\,^{\G_{n,n}^j}N_{(j)}$, the representation $X$ is said to be {\em non-homogenous}.
\begin{remark}
{\em Theorem \ref{classifi} establishes that if $M$ is homogeneous, there exist at most $mn$ distinct generic elements in $k_m$ that give rise to the ``same'' simple regular representation. Furthermore, for $\Lambda_n$, the homogeneous simple regular representations are parametrized by $\spec(k_n[x])$.}
\end{remark}
We now seek to establish a suitable representative for these regular simple representations. When $X$ is non-homogenous we can construct a satisfactory representative, for all $1\leq j\leq n$, as demonstrated in Theorem \ref{teoclassmn}.

For homogenous $X$, we only have a partial classification when $k_d\otimes X\cong\bigoplus_{j=0}^{n-1}\,^{\G_{d,n}^j}M_{x^n}$, where $x^n$ is a generic element for $x\in k_n$ (see Theorem \ref{teoclassma}).

To accomplish this, for a positive integer $m$, we begin by defining the matrix $\A_{m}\in\M_{m\times m}(k)$ as follows:
$$
(\A_{m})_{ij}=\left\{\begin{array}{ccc}\delta_{i+1,\,j}&\mbox{si}&i<m,\\\delta_{1,\,j}\E&\mbox{si}&i=m,\end{array}\right.
$$
i.e, in a more explicit form
$$
\A_{m}=\left(\begin{array}{ccccc}0&1&0&\cdots&0\\ 0&0&1&\cdots&0\\\vdots&\vdots&\vdots&\ddots&\vdots\\0&0&0&\cdots&1\\\E&0&0&\cdots&0\end{array}\right).
$$
For the sake of completeness, we define $\A_{1}=\E$. The minimal polynomial of $\A_{m}$, can be verified without difficulty, is $y^m-\E\in k[y]$.

Now, from Lemma 2.7 in \cite{GR22}, we can state that:

 
\begin{prop}\label{prop3.10}
Consider $x$ a generic element in $k_m$. Then, $\bigoplus_{j=0}^{m-1}\,^{\G_{d,n}^j}M_{x}$ is isomorphic to 
\begin{center}
\begin{tikzpicture}
[->,>=stealth',shorten >=1pt,auto,node distance=3.3cm,thick,main node/.style=]
  \node[main node] (1) {$k_d^m$};   
  \node[main node] (6) [below of =1]{$k_d^m$};   
  \node[main node] (2) [right of=1] {$\cdots$};
  \node[main node] (7) [below of =2]{$\cdots$};   
  \node[main node] (3) [right of=2] {$k_d^m$};
    \node[main node] (8) [below of =3]{$k_d^m$.};
\path[every node/.style={font=\sffamily\small}]     
(1) edge node   {$\A(x)$} (6)         
     edge node         {$\id_m$} (7)           
(2) edge node  {$\id_m$} (8)              
(3) edge node  {$\id_m$} (8) 
    edge node  {$\id_m$} (6);
\end{tikzpicture}
\end{center}
\end{prop}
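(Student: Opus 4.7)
The plan is to make the summand description completely explicit, then reduce the claim to similarity of two matrices over $k_d$.

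First I would invoke the identification ${}^{\G_{d,n}^j}M_x\cong M_{\Si_d^j(x)}$ that was established in Subsection \ref{hnhrep}. Taking the direct sum over $j=0,\dots,m-1$, one obtains a representation of $\mathcal{Q}_n$ with $k_d^m$ at every vertex, the diagonal matrix
$$
D := \diag\bigl(x,\,\Si_d(x),\,\Si_d^2(x),\,\ldots,\,\Si_d^{m-1}(x)\bigr)
$$
on the arrow $\A_1\colon 1\to 1'$, and the identity $\id_m$ on every other arrow.

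Second, I would analyse when such an intermediate representation is isomorphic to the target representation in the statement (which carries $\mathcal{A}(x)$ on $\A_1$ and $\id_m$ everywhere else). Commutativity at every identity-labeled arrow forces very strong coincidences among the data $(f_i,f_{i'})$: the constraint $f_{i'}=f_i$ from $\A_i$ for $i\ge 2$, together with $f_{i'}=f_{i-1}$ (cyclically) from $\A_{i'}$, collapses all $f_i$ and $f_{i'}$ to a common matrix $P\in\GL_m(k_d)$. The only remaining condition comes from $\A_1$ and reads $\mathcal{A}(x)\,P = P\,D$, i.e.\ $\mathcal{A}(x)=PDP^{-1}$.

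Third, I would produce such a $P$. Since $x\in k_m$ is generic, the conjugates $\Si_d^j(x)=\Si_m^j(x)$, $j=0,\dots,m-1$, are pairwise distinct, so $D$ is diagonalizable over $k_d$ with minimal polynomial
$$
p(y)=\prod_{j=0}^{m-1}\bigl(y-\Si_m^j(x)\bigr)\in k[y],
$$
which is precisely the irreducible minimal polynomial of $x$ over $k$. The matrix $\mathcal{A}(x)$ is, by construction (this is the content of Lemma~2.7 in \cite{GR22}, applied here), built so as to have $p(y)$ as its characteristic and minimal polynomial; hence $\mathcal{A}(x)$ and $D$ share the same invariant factors, are similar over $k_d$, and any intertwining $P$ yields the required isomorphism.

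The main obstacle is not the representation-theoretic bookkeeping, which is essentially forced once the twist ${}^{\G_{d,n}^j}$ is unpacked, but verifying that $\mathcal{A}(x)$ has the expected minimal polynomial $p(y)$ — that is, importing the companion-matrix construction of Lemma~2.7 in \cite{GR22} into our crown setting. Once this is in hand, the similarity with $D$ is a routine consequence of the fact that a matrix over a field is determined up to conjugation by its minimal polynomial when that polynomial is separable.
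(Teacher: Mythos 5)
Your proposal is correct and follows essentially the same route as the paper: identify $\bigoplus_j{}^{\G_{d,n}^j}M_x$ with the representation carrying $D_x=\diag(x,\Si_m(x),\dots,\Si_m^{m-1}(x))$ on the arrow $\A_1$ and identities elsewhere, and then reduce the isomorphism to the single matrix identity $\A(x)P=PD_x$, which both you and the paper obtain from Lemma~2.7 of \cite{GR22}. The extra detail you supply (that the identity-labeled arrows force all components of the isomorphism to coincide, and that similarity follows from the shared separable minimal polynomial) is a sound elaboration of what the paper leaves implicit, not a different argument.
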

\begin{proof}
Suppose $x\in k_m$ is a generic element. Since $^{\G_{d,n}^j}M_{x}\cong M_{\Si_d^j(x)}$, we can consider $\bigoplus_{j=0}^{n-1}\,^{\G_{d,n}^j}M_{x}$ as follows:
\begin{center}
\begin{tikzpicture}
[->,>=stealth',shorten >=1pt,auto,node distance=3.3cm,thick,main node/.style=]
  \node[main node] (1) {$k_d^m$};   
  \node[main node] (6) [below of =1]{$k_d^m$};   
  \node[main node] (2) [right of=1] {$\cdots$};
  \node[main node] (7) [below of =2]{$\cdots$};   
  \node[main node] (3) [right of=2] {$k_d^m$};
    \node[main node] (8) [below of =3]{$k_d^m$};
\path[every node/.style={font=\sffamily\small}]     
(1) edge node   {$D_{x}$} (6)         
     edge node         {$\id_m$} (7)           
(2) edge node  {$\id_m$} (8)              
(3) edge node  {$\id_m$} (8) 
    edge node  {$\id_m$} (6);
\end{tikzpicture}
\end{center}
where $D_{x}=\diag({x},\Si_m({x}),\cdots,\,\Si_m^{m-1}({x}))$. By Lemma 2.7 in \cite{GR22}, it follows that there exist $A\in\GL_m(k_m)$ such that $\A(x) A=AD_{x }.$

\end{proof}

\begin{teo}\label{teoclassma}
Consider $x\in k_m$ such that $x^n$ is a generic element. Then, the representation $\varphi_x:k_n^m\oplus^{\Si_n}k_n^m\longrightarrow k_n$, whose matrix is $[\A(x)\id_m]$, is a regular simple representation.
\end{teo}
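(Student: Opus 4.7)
The plan is to invoke the Galois descent correspondence of Theorem \ref{lemma3.7}. Setting $d=\mcm(n,m)$, I will show that after applying $k_d\otimes_k-$ the representation $\varphi_x$ decomposes as
$$k_d\otimes_k\varphi_x \;\cong\; \bigoplus_{l=0}^{m-1}\,^{\G_{d,n}^l}M_{x^n}.$$
Since $x^n$ is generic in $k_m$, this will place $M_{x^n}$ inside the orbit class described in Theorem \ref{lemma3.7}, from which regular simplicity of $\varphi_x$ (with endomorphism ring $k_m$) follows at once.

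The first step is to describe $k_d\otimes_k\varphi_x$ explicitly as a $k_d\mathcal{Q}_n$-representation using the recipe in Subsection \ref{Invariantrep}. Because the entries of $\mathcal{A}(x)$ lie in $k\subseteq k_d$, each Galois twist satisfies $\Si_d^j(\mathcal{A}(x))=\mathcal{A}(x)$ and $\Si_d^j(\id_m)=\id_m$, so every $\A_i$-arrow of the resulting crown representation carries $\mathcal{A}(x)$ and every $\A_{i'}$-arrow carries $\id_m$. The second step is to identify this representation with the canonical form of Proposition \ref{prop3.10}, but with parameter $x^n$ in place of $x$. Using the diagonalization $A^{-1}\mathcal{A}(x)A=D_x=\diag(x,\Si_m(x),\ldots,\Si_m^{m-1}(x))$ obtained in the proof of Proposition \ref{prop3.10} (together with $\mathcal{A}(x^n)=\mathcal{A}(x)^n$, which follows since $\mathcal{A}$ is polynomial evaluation at $\mathcal{A}_m$), I will produce an explicit change of basis realizing the desired isomorphism. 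Concretely, after diagonalizing at every vertex by $A$, the representation breaks into $m$ one-dimensional scalar summands, the $l$-th of which has all $\A_i$-maps equal to $\Si_m^l(x)$ and all $\A_{i'}$-maps equal to $1$; a cyclic rescaling of the vertex bases (entirely analogous to the manipulation carried out in the proof of Proposition \ref{propma}) converts this summand into the scalar representation with $\A_1=\Si_m^l(x)^n$ and all other arrows equal to $1$, that is, $M_{\Si_m^l(x)^n}=M_{\Si_m^l(x^n)}\cong\,^{\G_{d,n}^l}M_{x^n}$.

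It remains to verify the two hypotheses of Theorem \ref{lemma3.7} for $M_{x^n}$. The module $M_{x^n}$ is a regular simple $k_d\mathcal{Q}_n$-module (a one-dimensional band representation in the classification of Theorem \ref{classifi}), and $\End_{k_d\mathcal{Q}_n}(M_{x^n})\cong k_d$. The genericity of $x^n$ in $k_m$ is equivalent to $\Si_d^j(x^n)\neq x^n$ for $1\leq j<m$ and $\Si_d^m(x^n)=x^n$, which in turn translates to $\,^{\G_{d,n}^j}M_{x^n}\not\cong M_{x^n}$ for $1\leq j<m$ and $\,^{\G_{d,n}^m}M_{x^n}\cong M_{x^n}$. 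Theorem \ref{lemma3.7} then yields that $\varphi_x$ is precisely the regular simple $\Lambda_n$-module associated to the $\langle\Si_d\rangle$-orbit of $M_{x^n}$, with endomorphism ring $k_m$. The main obstacle is the second step: making rigorous the identification between $k_d\otimes_k\varphi_x$ (where the matrix $\mathcal{A}(x)$ appears in every $\A_i$-slot) and the canonical form of Proposition \ref{prop3.10} (where only the $\A_1$-slot is distinguished, with value $\mathcal{A}(x^n)$), and tracking correctly the $n$-th power relation between the matrix parameter $x$ and the resulting scalar parameter $x^n$.
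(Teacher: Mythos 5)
Your argument is correct and follows essentially the same route as the paper: both reduce $k_d\otimes_k\varphi_x$ to $\bigoplus_{l=0}^{m-1}\,^{\G_{d,n}^l}M_{x^n}$ via the identity $\A(x^n)=\A(x)^n$ together with a cyclic rescaling of the vertex bases, and then conclude by Proposition \ref{prop3.10} and Theorem \ref{lemma3.7}. The only (cosmetic) difference is the order of the two reductions --- the paper rescales all $\A_i$-slots into a single $\A(x)^n$ first and diagonalizes afterwards, whereas you diagonalize first and rescale each scalar summand --- and your explicit verification of the orbit conditions of Theorem \ref{lemma3.7} for $M_{x^n}$ spells out a step the paper leaves implicit.
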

\begin{proof}
We know that $k_d\otimes\varphi_x$ can be identified with the representation
\begin{center}
\begin{tikzpicture}
[->,>=stealth',shorten >=1pt,auto,node distance=3.3cm,thick,main node/.style=]
  \node[main node] (1) {$k_d^m$};   
  \node[main node] (6) [below of =1]{$k_d^m$};   
  \node[main node] (2) [right of=1] {$k_d^m$};
  \node[main node] (7) [below of =2]{$k_d^m$};   
  \node[main node] (3) [right of=2] {$\cdots$};
    \node[main node] (8) [below of =3]{$\cdots$};
  \node[main node] (4) [right of=3] {$k_d^m$};
  \node[main node] (5) [below of=4] {$k_d^m$};
\path[every node/.style={font=\sffamily\small}]     
(1) edge node   {$\A(x)$} (6)         
     edge node         {$\Si_n^{n-1}(\id_m)$} (7)           
(2) edge node  {${\scriptscriptstyle\Si_n^{n-1}(\A(x))}$} (7)              
(4) edge node  {$\Si_n(\A(x))$} (5)              
   edge node         {$\id_m$} (6)
   (3) edge node  {$\Si_n(\id_m)$} (5) ;
\end{tikzpicture}
\end{center}
where $d=\mcm(n,m)$. Since $\Si_n^j(\id_m)=\id_m$ and $\Si_n^j(\A(x))=\A(x)$, we have that the corresponding representation is 
\begin{center}
\begin{tikzpicture}
[->,>=stealth',shorten >=1pt,auto,node distance=2.5cm,thick,main node/.style=]
  \node[main node] (1) {$k_d^m$};   
  \node[main node] (6) [below of =1]{$k_d^m$};   
  \node[main node] (2) [right of=1] {$k_d^m$};
  \node[main node] (7) [below of =2]{$k_d^m$};   
  \node[main node] (3) [right of=2] {$\cdots$};
    \node[main node] (8) [below of =3]{$\cdots$};
  \node[main node] (4) [right of=3] {$k_d^m$};
  \node[main node] (5) [below of=4] {$k_d^m$};
\path[every node/.style={font=\sffamily\small}]     
(1) edge node   {$\A(x)$} (6)         
     edge node         {$\id_m$} (7)           
(2) edge node  {$\A(x)$} (7)              
(4) edge node  {$\A(x)$} (5)              
   edge node         {$\id_m$} (6)
   (3) edge node  {$\id_m$} (5) ;
\end{tikzpicture}
\end{center}
Nevertheless, this representation is isomorphic to
\begin{center}
\begin{tikzpicture}
[->,>=stealth',shorten >=1pt,auto,node distance=2.5cm,thick,main node/.style=]
  \node[main node] (1) {$k_d^m$};   
  \node[main node] (6) [below of =1]{$k_d^m$};   
  \node[main node] (2) [right of=1] {$k_d^m$};
  \node[main node] (7) [below of =2]{$k_d^m$};   
  \node[main node] (3) [right of=2] {$\cdots$};
    \node[main node] (8) [below of =3]{$\cdots$};
  \node[main node] (4) [right of=3] {$k_d^m$};
  \node[main node] (5) [below of=4] {$k_d^m$};
\path[every node/.style={font=\sffamily\small}]     
(1) edge node   {$\A(x)^n$} (6)         
     edge node         {$\id_m$} (7)           
(2) edge node  {$\id_m$} (7)              
(4) edge node  {$\id_m$} (5)              
   edge node         {$\id_m$} (6)
   (3) edge node  {$\id_m$} (5) ;
\end{tikzpicture}
\end{center}
where the isomorphism is given by $f=((f_i,f_{i'}))_{1\leq i\leq n-1}$, where $f_i=\A(x)^{n-i}$ and $f_{i'}=f_{j}$, $i\equiv j-1(\modu n)$. From the equation $\A(x^n)=\A(x)^n$, and due to Proposition \ref{prop3.10} and Theorem \ref{lemma3.7}, we can conclude that $\varphi_x$ is regular simple representation.
\end{proof}

\begin{lema}\label{lemma3.17}
Let $n\in \Z$ and $1\leq j\leq n$. Then, there exists an isomorphism between $\bigoplus_{i=0}^{n-1}\,^{\G_{n,n}^i}M_{(j)}$ and the representation 
        \begin{center}
\begin{tikzpicture}
[->,>=stealth',shorten >=1pt,auto,node distance=3.3cm,thick,main node/.style=]
  \node[main node] (1) {$k_n^j$};   
  \node[main node] (6) [below of =1]{$k_n^j$};   
  \node[main node] (2) [right of=1] {$\cdots$};
  \node[main node] (7) [below of =2]{$\cdots$};   
  \node[main node] (3) [right of=2] {$k_n^j$};
    \node[main node] (8) [below of =3]{$k_n^j$};
\path[every node/.style={font=\sffamily\small}]     
(1) edge node   {$\id_j$} (6)         
     edge node         {$B$} (7)           
(2) edge node  {$B$} (8)              
(3) edge node  {$\id_{j}$} (8) 
    edge node  {$B$} (6);
\end{tikzpicture}
\end{center}
where $B=\left[\begin{array}{cc}
             0_{1\times j-1} &0_{1\times1}  \\
             \id_{j-1} &0_{j-1\times 1} 
         \end{array}\right]$.
    \end{lema}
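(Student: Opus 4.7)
The plan is to exhibit an explicit $k_n$-linear isomorphism of $\mathcal{Q}_n$-representations vertex by vertex. Writing the candidate as $f = ((P_i, Q_{i'}))_{1 \leq i \leq n}$ with $P_i, Q_{i'} \in \GL_j(k_n)$, commutativity on the arrows $\A_i : i \to i'$, which carry $\id_j$ on both sides of the putative isomorphism, immediately forces $Q_{i'} = P_i$. The whole problem then reduces to choosing $P_1, \ldots, P_n$ so that
$$
P_i\,B_{i-1} \;=\; B\,P_{i-1}
$$
holds on every arrow $\A_{i'}$, with the conventions $B_0 := B_n$ and $P_0 := P_n$ for the wrap-around at $i=1$.

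My choice is $P_i := E_j^{\,i-1}$ for $1 \leq i \leq j$ and $P_i := \id_j$ for $j+1 \leq i \leq n$ (when $j<n$), where $E_j$ is the cyclic shift matrix of equation (\ref{eqen}). Since $E_j^{\,j} = \id_j$, the two pieces of the definition agree at $i = j+1$, and when $j = n$ the first formula already covers every index. The heart of the verification is the single commutation identity
$$
E_j\,(\id_j - E_{l,l}) \;=\; (\id_j - E_{l+1,\,l+1})\,E_j
$$
with subscripts read modulo $j$, together with the factorization $B = (\id_j - E_{1,1})\,E_j$. Both are one-line matrix computations from the definitions. Iterating the commutation identity $i-1$ times and then inserting the factorization of $B$ yields the required intertwining $P_i\,B_{i-1} = (\id_j - E_{1,1})\,E_j^{\,i-1} = B\,P_{i-1}$ for each $2 \leq i \leq j+1$; and for $i \in \{j+2, \ldots, n\}$ the condition collapses to $B = B$ because both $B_{i-1}$ and the target map already equal $B$ and both $P_i, P_{i-1}$ are the identity.

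I do not expect a serious obstacle. The only genuinely delicate point is the wrap-around arrow $\A_{1'}$ from $n$ to $1'$: if $j < n$ it is trivial because $B_n = B$ and $P_1 = P_n = \id_j$; if $j = n$ then $B_n = \id_j - E_{1,1}$ is still a projection and one has to use $E_j^{\,n} = E_j^{\,j} = \id_j$ together with the factorization of $B$ to close the loop. The main care required is bookkeeping around the two layers of modular arithmetic, namely indices mod $j$ inside the standard-basis projections $E_{l,l}$ and indices mod $n$ along the $n$-crown, and treating the corner case $j = n$ explicitly, since then the wrap-around arrow is itself one of those carrying a nontrivial projection rather than $B$.
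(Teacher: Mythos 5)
Your proposal is correct and follows essentially the same route as the paper: an explicit vertexwise isomorphism whose components are powers of the cyclic shift $E_j$ on the first $j$ pairs of vertices and the identity elsewhere, with the sink components forced to equal the source components by the $\id_j$ arrows, and the intertwining on the remaining arrows checked via the commutation of $E_j$ with the rank-one projections $E_{l,l}$. The only cosmetic difference is that your exponents $E_j^{\,i-1}$ are the inverses of the paper's $E_j^{\,j+1-i}$, which amounts to writing the isomorphism in the opposite direction.
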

\begin{proof}
It is easy to verifies that 
$$
\left[\begin{array}{cc}
             0_{1\times j-1} &0_{1\times1}  \\
             \id_{j-1} &0_{j-1\times 1} 
         \end{array}\right](E_j)^{j+1-i}=(E_j)^{j-i}B_i\mbox{ si }1\leq i\leq j.
$$
Therefore, the isomorphism with the desired properties is given by $f((f_i,f_{i'}))_{1\leq i\leq n}$, which is defined as follows:

$$
f_{i'}=f_i=\left\{\begin{array}{cc}
            E_j^{j+1-i} &\mbox{ if }1\leq i\leq j,  \\
            \id_j. &\mbox{otherwise} 
        \end{array}\right.
$$
    \end{proof}

\begin{lema}\label{lemma3.18}
Let $n\in \Z$ and $1\leq j\leq n$. Then, there exists an isomorphism between $\bigoplus_{i=0}^{n-1}\,^{\G_{n,n}^i}N_{(j)}$ and the representation 
        \begin{center}
\begin{tikzpicture}
[->,>=stealth',shorten >=1pt,auto,node distance=3.3cm,thick,main node/.style=]
  \node[main node] (1) {$k_n^j$};   
  \node[main node] (6) [below of =1]{$k_n^j$};   
  \node[main node] (2) [right of=1] {$\cdots$};
  \node[main node] (7) [below of =2]{$\cdots$};   
  \node[main node] (3) [right of=2] {$k_n^j$};
    \node[main node] (8) [below of =3]{$k_n^j$};
\path[every node/.style={font=\sffamily\small}]     
(1) edge node   {$B$} (6)         
     edge node         {$\id_j$} (7)           
(2) edge node  {$\id_j$} (8)              
(3) edge node  {$B$} (8) 
    edge node  {$\id_j$} (6);
\end{tikzpicture}
\end{center}
where $B=\left[\begin{array}{cc}
             0_{1\times j-1} &0_{1\times1}  \\
             \id_{j-1} &0_{j-1\times 1} 
         \end{array}\right]$.
    \end{lema}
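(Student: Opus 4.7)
The plan is to adapt the strategy of Lemma \ref{lemma3.17} to the present case. First I would write out the representation $\bigoplus_{i=0}^{n-1}\,^{\G_{n,n}^i}N_{(j)}$ explicitly as a $k_n$-linear representation of $\mathcal{Q}_n$. By the same combinatorics that produced the block-matrix form of $\bigoplus_{i=0}^{n-1}\,^{\G_{n,n}^i}M_{(j)}$ (see the display preceding Remark \ref{rem:mor}), every vertex of $\mathcal{Q}_n$ carries $k_n^j$, and each arrow is decorated by a diagonal-like matrix $B'_i$ of exactly the same shape as the $B_i$ appearing there, but shifted by one position in the crown because the non-trivial arrow of the building block has been moved from $1\to 1'$ to $1\to 2'$. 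Concretely, I expect $B'_i=\id_j-E_{j+1-i,\,j+1-i}$ for the arrows in a shifted range of indices, and the matrix $B=\left[\begin{array}{cc} 0_{1\times (j-1)} & 0_{1\times 1}\\ \id_{j-1} & 0_{(j-1)\times 1}\end{array}\right]$ on the remaining arrows.

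Next, following the recipe of Lemma \ref{lemma3.17}, I would propose the morphism $f=((f_i,f_{i'}))_{1\leq i\leq n}$ defined by
\[
f_i=f_{i'}=\begin{cases} E_j^{\,j+1-i} & \text{if } 1\leq i\leq j,\\ \id_j & \text{otherwise,}\end{cases}
\]
with possibly a small reindexing shift to accommodate the rotation between $M_{(j)}$ and $N_{(j)}$. The identities of Remark \ref{rem:mor} give immediately $E_j^{-1}B_i=B_{i+1}E_j^{-1}$ for $1\leq i\leq j-1$ together with $\id_j B_j=B_{j+1}E_j^{-1}$ and $E_j^{-1}B_n=B_1\id_j$. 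Exactly the analogous relations hold with $B'_i$ in place of $B_i$, by direct inspection of the definitions, and they are what is needed to verify that each commutative square in the quiver $\mathcal{Q}_n$ closes up. Since each $f_i$ is a power of the invertible matrix $E_j$, the morphism $f$ is an isomorphism.

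The main obstacle is purely organizational: one must correctly identify, for the target diagram, which of the $2n$ arrows of $\mathcal{Q}_n$ receive the non-trivial block $B$ and which receive $\id_j$, keeping track of the shift introduced by replacing $M_{(j)}$ with $N_{(j)}$. Once the target representation is written down correctly, the proof reduces to the same finite list of matrix identities used in Remark \ref{rem:mor}, and the argument of Lemma \ref{lemma3.17} goes through \emph{mutatis mutandis}.
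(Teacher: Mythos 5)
Your approach is correct and is precisely what the paper intends: Lemma \ref{lemma3.18} is in fact stated in the paper without proof, and the argument you outline---transporting the proof of Lemma \ref{lemma3.17} with the roles of the arrows $\A_i$ and $\A_{i'}$ interchanged and the indices shifted by one, using powers of $E_j$ as the components of the isomorphism---is exactly the intended \emph{mutatis mutandis} adaptation. You stop short of writing the final formula for $f$, but the bookkeeping you defer is routine and your level of detail matches that of the paper's own proof of the twin lemma.
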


\begin{teo}\label{teoclassmn}
Consider $n\in \Z$ be positive and $1\leq j\leq n$. Then, the representation $\varphi:k_n^j\oplus\,^{\Si_n}k_n^j\longrightarrow k_n^j$, whose matrix is $[B\id_j]$ or $[\id_jB]$, is a regular simple representation, where $B$ is the matrix described in Lemmas \ref{lemma3.17} and \ref{lemma3.18}.
\end{teo}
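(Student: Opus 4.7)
The plan is to reduce the claim, via Theorem \ref{lemma3.7}, to verifying that $k_n\otimes_k\varphi$ is isomorphic to one of the two $ii$-indecomposable representations of the $n$-crown quiver $\mathcal{Q}_n$ identified in Lemmas \ref{lemma3.17} and \ref{lemma3.18}. Since Theorem \ref{classifi} together with Lemmas \ref{lemmamj} and \ref{lemmanj} already establish that both $M_{(j)}$ and $N_{(j)}$ are regular simple $k_n\mathcal{Q}_n$-modules satisfying the two conditions of Theorem \ref{lemma3.7} (with $m=d=n$), the bijective correspondence of that theorem will yield the desired conclusion.

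First, I would apply the functor $k_n\otimes_k -$ to $\varphi$ using the explicit recipe for $k_d\otimes_k\varphi_M$ given in Subsection \ref{Invariantrep}. The key observation is that every entry of $B$ and of $\id_j$ lies in the base field $k=\C\parE$ and is therefore fixed by every power of $\Si_n$. Consequently, each twisted label $\Si_n^s(B)$ or $\Si_n^s(\id_j)$ appearing on the arrows of the resulting $\mathcal{Q}_n$-representation collapses to $B$ or $\id_j$. Thus, for $\varphi$ with matrix $[B\ \id_j]$, the representation $k_n\otimes_k\varphi$ has every arrow $\A_i:i\to i'$ labeled $B$ and every arrow $\A_{i'}$ labeled $\id_j$; for the matrix $[\id_j\ B]$ these two roles are interchanged.

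Second, I would compare these diagrams directly with the representations described in Lemmas \ref{lemma3.17} and \ref{lemma3.18}. The matrix $[\id_j\ B]$ produces exactly the representation of Lemma \ref{lemma3.17} and is therefore isomorphic to $\bigoplus_{i=0}^{n-1}{}^{\G_{n,n}^i}M_{(j)}$; the matrix $[B\ \id_j]$ produces the representation of Lemma \ref{lemma3.18} and is isomorphic to $\bigoplus_{i=0}^{n-1}{}^{\G_{n,n}^i}N_{(j)}$. In either case, $k_n\otimes_k\varphi$ coincides with the image, under $k_n\otimes_k -$, of the regular simple $\Lambda_n$-module coming from the $\langle\Si_n\rangle$-orbit produced by Theorem \ref{lemma3.7}; the equivalence of categories in Remark \ref{rem:n-crown} then forces $\varphi$ itself to be regular simple.

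The only delicate point is the bookkeeping in the first step, namely confirming that when the matrix entries are Galois-fixed the general twisted formula in Subsection \ref{Invariantrep} does collapse to the constant-label diagrams of Lemmas \ref{lemma3.17} and \ref{lemma3.18}, and matching the left/right block with the correct arrow type $\A_i$ or $\A_{i'}$. Once that identification is made, the rest of the argument is a direct appeal to the machinery already developed, so no substantive obstacle is expected.
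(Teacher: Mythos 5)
Your proposal is correct and follows essentially the same route as the paper's own proof: apply $k_n\otimes_k-$, observe that the Galois-fixed entries collapse the twisted arrow labels, identify the result with the representations of Lemmas \ref{lemma3.17} and \ref{lemma3.18} (hence with $\bigoplus_i{}^{\G_{n,n}^i}M_{(j)}$ or $\bigoplus_i{}^{\G_{n,n}^i}N_{(j)}$), and conclude via Theorem \ref{lemma3.7}. Your explicit final step invoking the equivalence of Remark \ref{rem:n-crown} to pass from $k_n\otimes\varphi$ back to $\varphi$ itself is in fact slightly more careful than the paper's wording.
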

\begin{proof}
Consider the representation $\varphi:k_n^j\oplus\,^{\Si_n}k_n^j\longrightarrow k_n^j$, whose matrix is either $[A\id_j]$ or $[\id_jB]$. We will analyze the first case, namely the matrix $[B\id_j]$, since the second case is completely analogous. As established in Section \ref{Invariantrep}, we know that $k_n\otimes \varphi$ has the following description:
    \begin{center}
     \begin{tikzpicture}
[->,>=stealth',shorten >=1pt,auto,node distance=3.3cm,thick,main node/.style=]
  \node[main node] (1) {$k_n^j$};   
  \node[main node] (6) [below of =1]{$k_n^j$};   
  \node[main node] (2) [right of=1] {$\cdots$};
  \node[main node] (7) [below of =2]{$\cdots$};   
  \node[main node] (3) [right of=2] {$k_n^j$};
    \node[main node] (8) [below of =3]{$k_n^j$};
\path[every node/.style={font=\sffamily\small}]     
(1) edge node   {$A$} (6)         
     edge node         {$\Si_n^{n-1}(\id_j)$} (7)           
(2) edge node  {$\Si_n(\id_j)$} (8)              
(3) edge node  {$\Si_n(A)$} (8) 
    edge node  {$\id_j$} (6);
\end{tikzpicture}
\end{center}
where $\Si_n^i(A)=A$ and $\Si_n^i(\id_j)=\id_j$, for any $1\leq i\leq n-1$. By Lemma \ref{lemma3.18}, we have that $k_n\otimes \varphi$ is isomorphic to $\bigoplus_{i=0}^{n-1}\,^{\G_{n,n}^i}N_{(j)}$ and, by Theorem \ref{lemma3.7}, we conclude that $k_n\otimes \varphi$ is a regular simple representation.
\end{proof}

\subsection{The classification on $\Lambda_1$}\label{Seclambda}
In this case, we have that $\Lambda_1=\left[\begin{array}{cc}
    k & k^2 \\
    0 &k 
\end{array}\right]$, which is isomorphic to $k\mathcal{Q}_1$ the path algebra of 2-Kronecker quiver:
$$
\mathcal{Q}_1=\begin{tikzcd}
     1
& 2 \arrow[l, shift left]
        \arrow[l, shift right]
\end{tikzcd}.
$$


The classification of all irreducible representations of the Kronecker quiver over a field is a well-established problem, as evidenced in \cite{Ri13} for $n-$Kronecker quiver and \cite{Hu16} for the specific case 2-Kronecker quiver. The regular simple representations of $k_n\mathcal{Q}_1$ are
$$
M_a\coloneqq\begin{tikzcd}
     k_n
& k_n \arrow[l, shift left,"1"]
        \arrow[l, shift right,"a"']
\end{tikzcd}\,\,\,\,\,\,\,\,\,\,M_{(1)}\coloneqq\begin{tikzcd}
     k_n
& k_n \arrow[l, shift left,"0"]
        \arrow[l, shift right,"1"']
\end{tikzcd}.
$$
As demonstrated in \cite{Hu16} there exists a bijection between the regular semisimple modules and the irreducible polynomials over the algebra $k[x]$. Furthermore, for each irreducible monic polynomial  $p(x)\in k[x]$, with $\deg(p)=n$, there corresponds the following representation:
$$
\begin{tikzcd}
     k_n^n
& k_n \arrow[l, shift left,"\id_n"]
        \arrow[l, shift right,"C(p)"']
\end{tikzcd},
$$
where $C(p)$ is the companion matrix of $p(x)$. Recall that if $a\in k_n$ is a root of $p(x)$ then $a$  is generic, i.e $\Si_n^j(a)\neq a$ for $1\leq j\leq n-1$. Thus $C(p)=V^{-1}\diag(a,\Si_n(a),\cdots,\Si_n^{n-1}(a))$V, where $V$ is the Vandermonde matrix:
$$
V=\left[\begin{array}{ccccc}1&a&a^2&\cdots&a^{n-1}\\1&\Si_n(a)&(\Si_n(a))^2&\cdots&(\Si_n(a))^{n-1}\\
  \vdots&\vdots&\vdots&\ddots&\vdots\\1&\Si_n^{n-1}(a)&(\Si_n^{n-1}(a))^2&\cdots&(\Si_n^{n-1}(a))^{n-1}\end{array}\right].
  $$
Therefore, the representations of $p(x)$ is equivalent to
$$ 
\begin{tikzcd}
     k_n^n
& k_n \arrow[l, shift left,"\id_n"]
        \arrow[l, shift right,"D_a"']
\end{tikzcd},
$$
where $D_a=\diag(a,\Si_n(a),\cdots,\Si_n^{n-1}(a))$. 

Alternatively, a representation that is not part of this family is 
$$
\begin{tikzcd}
     k
& k\arrow[l, shift left,"0"]
        \arrow[l, shift right,"1"']
\end{tikzcd}\oplus \begin{tikzcd}
     k
& k \arrow[l, shift left,"1"]
        \arrow[l, shift right,"0"']
\end{tikzcd}.
$$
\begin{prop}
Let $n$ a positive integer and $M$ a $\Lambda_1-$module such that either $M=\bigoplus_{j=0}^{n-1}\,^{\G_n^j}M_a$ or $M=M_{(1)}\oplus\,^{\G_2}M_{(1)}$. Then, there exists a morphism $f:M\longrightarrow\,^{\G_n}M$ such that the induced automorphism $^{\G_{n}^{n-1}}f\circ\cdots\circ^{\G_{n}}f\circ f$ of $M$ is the identity.
\end{prop}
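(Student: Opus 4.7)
The plan is to follow the same template used in the proofs of Propositions \ref{propma} and \ref{propmorphism}, and of Lemmas \ref{lemmamj}--\ref{lemmanj}, now applied to the simpler 2-Kronecker quiver $\mathcal{Q}_1$. I would treat the two cases separately; in each case the key points are: (i) produce a convenient matrix normal form for $M$; (ii) define $f$ componentwise by a cyclic permutation matrix of the correct order; (iii) verify the commutation identities needed for $f$ to be a morphism $M\to{}^{\G_n}M$; and (iv) compute the iterated composition as a power of that permutation matrix, which will be the identity.

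For the first case, with $a\in k_n$ generic, the same identifications used in Subsection \ref{Invariantrep} allow one to realize $M=\bigoplus_{j=0}^{n-1}{}^{\G_n^j}M_a$ as a representation of $\mathcal{Q}_1$ with $k_n^n$ at both vertices, the two arrows acting (in suitable bases) as $\id_n$ and a diagonal matrix $D_a$ whose diagonal entries are the Galois conjugates $a,\Si_n(a),\ldots,\Si_n^{n-1}(a)$. I would set $f:=(E_n,E_n)$ with $E_n$ the cyclic permutation matrix from \eqref{eqen}; exactly as in Proposition \ref{propma}, $E_n$ intertwines $D_a$ with $\Si_n(D_a)$ (cyclically shifting the diagonal of $D_a$ produces $\Si_n(D_a)$), so $f$ is a morphism $M\to{}^{\G_n}M$. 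Since $E_n$ has entries in $k$, one has ${}^{\G_n^j}f=f$ for every $j$, and therefore ${}^{\G_n^{n-1}}f\circ\cdots\circ{}^{\G_n}f\circ f=(E_n^n,E_n^n)=(\id_n,\id_n)$.

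The second case is the Kronecker analog of Lemma \ref{lemmamj}. Here $n=2$ and $\G_2$ swaps the two arrows of $\mathcal{Q}_1$, so if $M_{(1)}$ is the representation in which the two arrows act by $0$ and $1$, then ${}^{\G_2}M_{(1)}$ is the one in which they act by $1$ and $0$. Consequently $M=M_{(1)}\oplus{}^{\G_2}M_{(1)}$ can be written with $k^2$ at each vertex and with the two arrows acting by the complementary idempotents $\diag(0,1)$ and $\diag(1,0)$. Taking $f=(S,S)$ with $S\in\M_{2\times 2}(k)$ the swap matrix, one verifies the commutation identities by direct matrix multiplication, and then ${}^{\G_2}f\circ f=(S^2,S^2)=(\id,\id)$, as required. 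The most delicate point will be to pin down the precise convention for the action of $\G_n$ on $k_n\mathcal{Q}_1$ (in the first case) and on the arrows of $\mathcal{Q}_1$ (in the second), so that the matrix normal forms above are correct; once this is settled, everything reduces to elementary linear algebra.
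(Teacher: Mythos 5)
Your proposal is correct and coincides with the paper's proof, which consists of the single line ``It suffices to take $f=(E_n,E_n)$'': your first case uses exactly this $f$, and your swap matrix $S$ in the second case is precisely $E_2$. The intertwining verifications and the convention caveat you add are details the paper omits entirely, so you are simply fleshing out the same argument.
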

\begin{proof}
It suffices to take $f=(E_n,E_n)$. 
\end{proof}
It is straightforward to adapt Lemma \ref{obs6} to the case of $\mathcal{Q}_1$. Therefore, we can conclude the following proposition.
\begin{prop}
Each regular simple representation of $\Lambda_1$ is isomorphic to either $\bigoplus_{j=0}^{n-1}\,^{\G_n^j}M_a$, for some generic element $a\in k_n$, or $M_{(1)}\oplus\,^{\G_2}M_{(1)}$.
\end{prop}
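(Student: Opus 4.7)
The plan is to specialize the Galois descent machinery developed in this section to the case $n=1$, where Theorem~\ref{teoiso} reduces to the isomorphism $k_m\otimes_k\Lambda_1\cong k_m\mathcal{Q}_1$ (the $2$-Kronecker over $k_m$) with $\langle\Si_m\rangle$ acting on this tensor product. The straightforward analog of Lemma~\ref{obs6} and Theorem~\ref{lemma3.7} in this setting yields a bijection between isomorphism classes of regular simple $\Lambda_1$-modules with endomorphism ring $k_m$ and $\langle\Si_m\rangle$-orbits of length $m$ of regular simple $k_m\mathcal{Q}_1$-modules $N$ with $\End_{k_m\mathcal{Q}_1}(N)\cong k_m$. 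The problem therefore reduces to enumerating all such orbits.

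Invoking the classical classification of the regular simples of the $2$-Kronecker recalled in the paragraphs preceding the proposition, every regular simple $k_m\mathcal{Q}_1$-module is isomorphic to either $M_a$ for some $a\in k_m$ or to the exceptional module $M_{(1)}$. Since $\,^{\G_n^j}M_a\cong M_{\Si_n^j(a)}$, the orbit of such an $M_a$ has size exactly $n$ precisely when $a$ is generic in $k_n$ in the sense of Section~\ref{Sec:PreyNot}, and this produces the first family $\bigoplus_{j=0}^{n-1}\,^{\G_n^j}M_a$. On the other hand, $M_{(1)}$ has structure maps $0$ and $1$ in the fixed field $k$, so the only non-trivial orbit containing it arises from the arrow-interchange embedded in the $\G_2$-action on $\mathcal{Q}_1$, which sends $M_{(1)}$ (maps $(0,1)$) to a distinct representation $\,^{\G_2}M_{(1)}$ (maps $(1,0)$). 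The resulting length-$2$ orbit furnishes the second family $M_{(1)}\oplus\,^{\G_2}M_{(1)}$.

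The preceding proposition already exhibits, in both cases, an explicit isomorphism $f=(E_n,E_n)$ from $M$ to $\,^{\G_n}M$ whose $n$-fold iterate is the identity, which is exactly the cocycle condition required by Proposition~\ref{obs3}; combined with the descent correspondence this certifies that each listed family genuinely descends to a regular simple $\Lambda_1$-module. The main obstacle in turning this plan into a proof is verifying that these two orbit types exhaust all possibilities---concretely, that no ``mixed'' orbit involving both some $M_a$ and $M_{(1)}$ can occur and that no further exceptional orbit is hidden in the $M_{(1)}$ family. Additivity of the autoequivalence $F(\G_n)$ from Subsection~\ref{Invariantrep}, together with the fact that $M_a$ and $M_{(1)}$ sit in distinct isomorphism classes of regular simples of the Kronecker quiver, imply that $F(\G_n)$ can only permute modules within each of the two families separately, which closes the argument.
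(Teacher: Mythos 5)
Your proposal is correct and follows essentially the same route as the paper: the paper's entire proof is the remark that Lemma~\ref{obs6} (and hence the descent correspondence of Theorem~\ref{lemma3.7}) adapts directly to $\mathcal{Q}_1$, combined with the classification of regular simple Kronecker modules and the cocycle isomorphism $f=(E_n,E_n)$ recalled in the immediately preceding proposition. Your version simply spells out the orbit enumeration and the exhaustiveness argument that the paper leaves implicit.
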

Finally, for $\Lambda_1$, we can effectively describe the structure of a regular simple representation.
\begin{teo}
Let $a\in k_n$ be a generic element. Then, the representation $\varphi_a:k^n\oplus k^n\lra k^n$, whose matrix is given by $[\A(a)\id_n]$, is a regular simple representation.
\end{teo}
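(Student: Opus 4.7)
The proof should mirror the strategy of Theorem~\ref{teoclassma}, but in the simpler setting of the Kronecker quiver $\mathcal{Q}_1$, where no Galois twist appears on either arrow (since $\Si_1 = \id$, the bimodule in $\Lambda_1$ is just $k^2$). The plan is to apply the base-change functor $k_n\otimes_k -$ and compare the resulting $k_n\mathcal{Q}_1$-representation with the $ii$-indecomposable $\bigoplus_{j=0}^{n-1}\,^{\G_n^j}M_a$, which has already been identified as the shape of a regular simple $\Lambda_1$-module.

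First, I would describe the tensored representation $k_n\otimes_k\varphi_a$. Because $\Lambda_1$ involves no $\Si$-twist, both arrows of $\varphi_a$ remain untwisted after tensoring, so $k_n\otimes_k\varphi_a$ is the $\mathcal{Q}_1$-representation
\[
\begin{tikzcd}
k_n^{n} & k_n^{n}. \arrow[l, shift left, "\id_n"] \arrow[l, shift right, "\A(a)"']
\end{tikzcd}
\]
The key linear-algebra input is the fact (essentially Lemma~2.7 of \cite{GR22}, already invoked in the proof of Proposition~\ref{prop3.10}) that, because $a\in k_n$ is generic, the characteristic polynomial of $\A(a)$ equals $\prod_{j=0}^{n-1}(y-\Si_n^j(a))\in k[y]$ and is irreducible over $k$, whence $\A(a)$ is conjugate over $k_n$ to the diagonal matrix $D_a=\diag(a,\Si_n(a),\ldots,\Si_n^{n-1}(a))$ via some $V\in \GL_n(k_n)$.

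Using this conjugation, the representation above becomes isomorphic (after changing basis by $V$ on the right vertex and leaving the left vertex fixed) to
\[
\begin{tikzcd}
k_n^{n} & k_n^{n}. \arrow[l, shift left, "\id_n"] \arrow[l, shift right, "D_a"']
\end{tikzcd}
\]
This diagonal form splits as the direct sum $\bigoplus_{j=0}^{n-1} M_{\Si_n^j(a)}$, which is precisely $\bigoplus_{j=0}^{n-1}\,^{\G_n^j}M_a$ by the $\Lambda_1$-analogue of the invariance computation of Subsection~\ref{Invariantrep}. The genericity of $a$ ensures that the summands $M_{\Si_n^j(a)}$ are pairwise non-isomorphic and that $n$ is the smallest power with $\,^{\G_n^n}M_a\cong M_a$, so the conditions analogous to those of Theorem~\ref{lemma3.7} are satisfied.

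Finally, invoking the $\Lambda_1$-version of Theorem~\ref{lemma3.7} discussed in Subsection~\ref{Seclambda}, the existence of such an $ii$-indecomposable $\mathcal{Q}_1$-representation over $k_n$ corresponds bijectively to a regular simple $\Lambda_1$-module whose base-change to $k_n$ is exactly $\bigoplus_{j=0}^{n-1}\,^{\G_n^j}M_a$. Since $\varphi_a$ itself is a $\Lambda_1$-module with that property, we conclude that $\varphi_a$ is regular simple. The only non-routine point is the similarity $\A(a)\sim D_a$ over $k_n$ for generic $a$, which is the main obstacle; everything else is a direct translation of the arguments already carried out for $\Lambda_n$ with $n\geq 2$.
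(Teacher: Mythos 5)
Your proposal is correct and follows essentially the same route as the paper: base change along $k_n\otimes_k-$, diagonalize $\A(a)$ to $D_a=\diag(a,\Si_n(a),\ldots,\Si_n^{n-1}(a))$ via Lemma 2.7 of \cite{GR22} using genericity, identify the result with $\bigoplus_{j=0}^{n-1}\,{}^{\G_n^j}M_a$, and conclude by the descent correspondence of Theorem \ref{lemma3.7} adapted to $\Lambda_1$. The paper's own proof is just a terser version of exactly this argument, so no further comparison is needed.
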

\begin{proof}
    Since $a$ is generic it suffices to show that $k_n\otimes_k \varphi_x$ is isomorphic to
      $$ 
      \begin{tikzcd}
     k_n^n
& k_n \arrow[l, shift left,"\id_n"]
        \arrow[l, shift right,"D_a"']
\end{tikzcd},
$$
where $D_a=\diag(x,\Si_n(x),\cdots,\Si_n^{n-1}(x))$. Thus, the conclusion is a consequence of Lemma 2.7 in \cite{GR22}.
\end{proof}
\section*{Acknowledgement}
The second author gratefully acknowledges the support provided by CODI (Universidad de Antioquia, UdeA) through its Postdoc program. The first and third authors were partially supported by CODI (Universidad de Antioquia, UdeA) by project numbers 2022-52654 and 2023-62291, respectively.
 \addcontentsline{toc}{section}{References} 

\end{document}